\numberwithin{equation}{section} 
\numberwithin{table}{section} \setlength{\oddsidemargin}{0in}
\theoremstyle{plain}
\newtheorem{theorem}{Theorem}[section]
\newtheorem{proposition}[theorem]{Proposition}
\newtheorem{lemma}[theorem]{Lemma}
\newtheorem{corollary}[theorem]{Corollary}
\newtheorem{remark}[theorem]{Remark}
\newtheorem{example}[theorem]{Example}
\newtheorem{definition}[theorem]{Definition}
\def\R{\mathbb{R}}
\def\Z{\mathbb{Z}}
\begin{document}

\title[Concentration driven by an external magnetic field]{Nonlinear Schr\"odinger equation: concentration on circles driven by an external magnetic field}

\author{Denis Bonheure}
\address{Denis Bonheure,
\newline \indent D{\'e}partement de Math{\'e}matique, Universit{\'e} libre de Bruxelles,
\newline \indent CP 214,  Boulevard du Triomphe, B-1050 Bruxelles, Belgium
\newline \indent and INRIA - Team MEPHYSTO.}
\email{denis.bonheure@ulb.ac.be}

\author{Silvia Cingolani}
\address{Silvia Cingolani,
\newline \indent Dipartimento di Meccanica, Matematica e Management, Politecnico di Bari, \newline \indent Via E. Orabona 4, 70125 Bari, Italy.}
\email{silvia.cingolani@poliba.it}

\author{Manon Nys}
\address{Manon Nys \newline \indent Fonds National de la Recherche Scientifique- FNRS.
\newline \indent Département de Mathématique, Université Libre de Bruxelles, 
\newline \indent CP 214, Boulevard du triomphe, B-1050 Bruxelles, Belgium.
\newline \indent Dipartimento di Matematica e Applicazioni,  Universit\`{a} degli Studi di Milano-Bicocca,
\newline \indent via Bicocca degli Arcimboldi 8, 20126 Milano, Italy.}
\email{manonys@ulb.ac.be}

\date{\today}

\thanks{
D.B. is supported by INRIA - Team MEPHYSTO, MIS F.4508.14 (FNRS), PDR T.1110.14F (FNRS) 
\& ARC AUWB-2012-12/17-ULB1- IAPAS. He thanks the support of INDAM during his visits at the Politecnico di Bari where part of this work has been done. 
S.C. is partially supported by GNAMPA-INDAM Project 2015 \emph{Analisi variazionale di modelli fisici non lineari.} She thanks the support of FNRS during her visit at Universit\'e libre de Bruxelles where part of this work has been done.
M.N. is a Research Fellow of the Belgian Fonds de la Recherche Scientifique - FNRS and is partially supported by the project ERC Advanced Grant  2013 n. 339958: ``Complex Patterns for Strongly Interacting Dynamical Systems -COMPAT''. She thanks the Universit\`{a} di Torino for hospitality.
}

\maketitle

\begin{abstract}
In this paper, we study the semiclassical limit for the stationary magnetic nonlinear Schr\"odinger equation 
\begin{align}\label{eq:initialabstract}
\left( i \hbar \nabla + A(x) \right)^2 u + V(x) u = |u|^{p-2} u, \quad x\in \mathbb{R}^{3},
\end{align}
where $p>2$, $A$ is a vector potential associated to a given magnetic field $B$, i.e $\nabla \times A =B$ and  $V$ is a nonnegative, scalar (electric) potential which can be singular at the origin and vanish at infinity or outside a compact set.
We assume that $A$ and $V$ satisfy a cylindrical symmetry. By a refined penalization argument, we prove the existence of semiclassical cylindrically symmetric solutions of $(\ref{eq:initialabstract})$ whose moduli concentrate, as $\hbar \to 0$, around a circle. We emphasize that the concentration is driven by the magnetic and the electric potentials. 
Our result thus shows that in the semiclassical limit, the magnetic field also influences the location of the solutions of  $(\ref{eq:initialabstract})$ if their concentration occurs around a locus, not a single point. 
\\ 
\break
2010 \emph{AMS Subject Classification.}
\\
\emph{Keywords} Nonlinear Schrödinger equation; semiclassical states; Singular potential; Vanishing potential; Concentration on curves; External magnetic field; Variational methods; penalization method.
\end{abstract}

\section{Introduction} \label{section:introduction}

In Quantum Mechanics, the nonlinear Schrödinger equation (NLS) with a exterior magnetic field $B$, having source in the magnetic potential $A$, and a scalar (electric) potential $U$ has the form
\begin{align*} 
i \hbar \frac{\partial \psi}{\partial{t}} = \left(i \hbar \nabla + A(x) \right)^2 \psi +U(x) \psi= f(|\psi|^2) \psi, \qquad x\in \mathbb{R}^{N},
\end{align*}
where $N \geq 3$, $i^2=-1$, $\hbar$ is the Planck constant, and the mass is taken $m=1/2$ for simplicity. The magnetic laplacian is defined by
 \begin{align*}
\left( i \hbar \nabla + A(x) \right)^2 = - \hbar^2 \Delta   + 2 i \hbar A(x) \cdot \nabla + i \hbar \operatorname{div}A(x) + |A(x)|^2 ,
\end{align*} 
and $f(|\psi|^2) \psi$ is a nonlinear term. In dimension $N=3$, the magnetic potential $A$ is related to the magnetic field $B$ by the relation $B = \nabla \times A$. Such evolution equation arises in various physical contexts, such as nonlinear optics or plasma physics, where one simulates the interaction effect among many particles by introducing a nonlinear term. 

The search of standing waves $\psi(x,t)= e^{-i \frac{E}{\hbar} t}  \, u(x)$ leads to study the stationary nonlinear magnetic Schrödinger equation
\begin{align} \label{eq:initialproblem}
	\left( i \hbar \nabla + A(x) \right)^2 u + (U(x)-E) u = f(|u|^{2}) u, \qquad x\in \mathbb{R}^{N}.
\end{align}
In the following, we  write $V(x) = U(x) - E$ and for simplicity we consider $f(t)= t^{(p-2)/2}$. However, we note that a larger class of nonlinearity could be considered, see for instance \cite{Rabinowitz}.

\medbreak
For $\hbar >0$ fixed, the existence of a solution of \eqref{eq:initialproblem} whose modulus $|u_{\hbar}|$ vanishes at infinity was first proved by Esteban and Lions in \cite{EsLi} by using a constrained minimization approach. Concentration and compactness arguments are applied to solve the associated minimization problems for a broad class of magnetic fields. Successively in \cite{ArioliSzulkin}, Arioli and Szulkin studied the existence of infinitely many solutions of \eqref{eq:initialproblem} assuming that $V$ and $B$ are periodic.

\medbreak
In the present paper, we are interested in the semiclassical analysis of the magnetic nonlinear Schr\"odinger equation \eqref{eq:initialproblem}. From a mathematical point of view, the transition from quantum to classical mechanics can be formally performed by letting $\hbar\rightarrow0$. For small values of $\hbar>0$, solutions $u_{\hbar} : \mathbb{R}^{N}\rightarrow\mathbb{C}$ of \eqref{eq:initialproblem} are usually referred to as semiclassical (ground or bound) states.

\medbreak 
When $A=0$, the study of the nonlinear Schrödinger equation 
\begin{align} \label{eq:initialproblem2}
- \hbar^2 \Delta u + V(x) u = f(|u|^{2}) u, \qquad x\in \mathbb{R}^{N},
\end{align}
has been extensively pursued in the semiclassical regime and a considerable amount of work has been done, showing that existence and concentration phenomena of single- and multiple-spike solutions occur at critical points of the electric potential $V$ when $\hbar \rightarrow0$, see e.g. \cite{ABC1997,AmbrosettiMalchiodiBook,CingolaniLazzo1997,CingolaniLazzo2000,DP-F,DelPinoFelmer1997,FW1986,Oh1989,Rabinowitz}. Successively, the question of existence of semiclassical solutions to NLS equations concentrating on higher dimensional sets has been investigated. In \cite{AmbrosettiMalchiodiNi2003} Ambrosetti, Malchiodi and Ni considered the case of a radial potential $V(|x|)$ and constructed radial solutions exhibiting concentration on  a sphere, which radius is a non degenerate critical point of the concentration function $\mathcal{M}(r)= r^{N-1} V^\sigma (r)$, $\sigma =  p/(p-2) - 1/2$ (see also \cite{AmbrosettiRuiz2006,BadialeDaprile2002}). Moreover they conjectured that this phenomenon takes place, at least along a sequence $\hbar_n \to 0$, whenever the sphere is replaced by a closed hypersurface $\Gamma$, stationary and non degenerate for the weighted area functional $\int_{\Gamma}  V^\sigma$. In \cite{DelPinoKowalczykWei2007}, the above conjecture was completely solved in the plane by Del Pino, Kowalczyk and Wei. We also quote \cite{MalchiodiMontenegro2002}, where Malchiodi and Montenegro considered the NLS equation on a smooth bounded domain $\Omega$ in $\R^2$ with Neumann boundary conditions and proved, for a suitable sequence $\hbar_n \rightarrow 0$, the existence of positive solutions $u_{\hbar_n}$ concentrating at the whole boundary of $\Omega$ or at some components of it. In \cite{Malchiodi2005,Malchiodi2004}, boundary concentration on a geodesic of the boundary has been treated in the three-dimensional case. Later on, concentration on spheres of dimension $N-2$ was studied in \cite{MollePassaseo2006} whereas Ambrosetti and Malchiodi proved the existence of solutions concentrating on $k$-dimensional spheres ($k\le 1\le N-1$), see \cite[Theorem 10.11]{AmbrosettiMalchiodiBook}. More recently in \cite{B-DC-VS}, Bonheure, Di Cosmo and Van Schaftingen proved the existence of semiclassical solutions to \eqref{eq:initialproblem2} concentrating on a $k$-dimensional sphere, $1 \leq k \leq N-1$, for a large class of symmetric potentials $V$. 

\medbreak
In presence of a magnetic field ($A \neq 0$), a challenging question is to establish how the magnetic field influences the existence and  the concentration of the moduli of the complex-valued solutions of \eqref{eq:initialproblem} as $\hbar \to 0$. A first result dealing with the concentration of least-energy solutions for magnetic NLS equations was obtained  in \cite{Kurata}. In this paper, Kurata proved that if    $(u_{\hbar})_{\hbar}$ is a sequence of least-energy solutions to \eqref{eq:initialproblem} with $f(t)= t^{(p-2)/2}$, then the sequence  $(|u_{\hbar}|)_{\hbar}$ of their moduli must concentrate at a global minimum $x_0$ of $V$, as $\hbar \to 0$. More precisely, there exist a sequence of points $(x_n)_n \subset \mathbb{R}^N$ and a subsequence still denoted by $(\hbar_n)_n$, with $x_n \rightarrow x_0$ and $\hbar_n \to 0$ as $n \to + \infty$, such that $v_{\hbar_n}(y)= u_{\hbar_n}(x_n + \hbar_n y)$ converges to some $v \in C^2_{\text{loc}}$ and converges also weakly in $L^p$. Moreover, $v$ satisfies the limiting equation
\begin{align*}
\left( i  \nabla + A(x_0) \right)^2 v + V(x_0)v = |v|^{p-2}v, \qquad x \in \R^N.
\end{align*}
If we let $w(x)= e^{- i A(x_0) \cdot x}\, v(x)$, it follows that $w$ satisfies weakly the equation  
\begin{align*}
-\Delta w + V(x_0) w = |w|^{p-2} w, \qquad x \in \R^N. 
\end{align*}
Hence the concentration of the least-energy solutions is driven by the electric potential while the magnetic potential influences the phase factor of the solutions, but does not affect the location of the peaks of their moduli. The existence of such semiclassical least-energy solutions for magnetic NLS equations was established in \cite{Cingolani2003} by using Ljusternick-Schnirelmann theory.

Successively, by using a penalization argument, the existence of semiclassical bound state solutions to \eqref{eq:initialproblem}, concentrating at local minima of $V$, has been proved in \cite{CingolaniSecchi2005}, for a large class of magnetic potentials, covering the case of polynomial growths corresponding to constant magnetic field (see also \cite{CingolaniSecchi2002} for bounded potentials).

We also refer to \cite{CingolaniJeanjeanSecchi2009} for existence results of multi-peak solutions to \eqref{eq:initialproblem}, whose moduli have multiple concentration points around local minima of $V$, dealing with a large class of nonlinear terms (possibly not monotone); and to \cite{CingolaniClapp2009} for semiclassical solutions having specific symmetries concentrating around orbits of critical points of $V$.

In \cite{SecchiSquassina2005}, the authors have established necessary conditions for a sequence of standing wave solutions of \eqref{eq:initialproblem} to concentrate, in different senses, around a given point. More precisely, they show that if $f(t) = t^{(p-2)/2}$, then the moduli of the peaks have to locate at critical points of $V$, independently of $A$, confirming what was conjectured in \cite{CingolaniSecchi2002}.

In all the above cited papers, the concentration of the moduli of the complex valued solutions occurs at one or a finite set of critical points of the electric field $V$, while the magnetic field only influences the phase factor of the standing waves as $\hbar$ is small.

\medbreak

In the present paper, we are interested in studying concentration phenomena on higher dimensional sets in the presence of a magnetic field. More particularly, we aim to understand how and in which situations the magnetic field influences such concentration. In the following, we restrict ourself to consider \eqref{eq:initialproblem} in $\mathbb{R}^3$, for which we can already detect some interesting phenomena.

More specifically, we consider the class of scalar potentials $V$ invariant under a group $G$ of orthogonal transformations, and the class of magnetic potentials $A$ equivariant under the same group, that is 
\begin{align} \label{eq:equivariantA}
g \, A(g^{-1}x) = A(x),
\end{align}
for every $g \in G$.

\smallskip

In dimension $3$, the simplest group is $G = O(3)$ which corresponds to a radially symmetric setting. The potential $V$ then depends only on $|x|$, while $A$ satisfies the equivariance condition \eqref{eq:equivariantA} for every $g \in O(3)$. However, this last constraint on $A$ is too strong, in the sense that the only possible vector potential satisfying this condition is a multiple of the normal vector to the sphere. Indeed, if $x$ is a point on a sphere of radius $r$, there always exist rotations $g_x \in O(3)$ that leave the axis going through the center of the sphere and $x$ invariant, that is $g_x x = x$ for those particular $g_x$. Then, at that point $x$, the equivariance condition \eqref{eq:equivariantA} rewrites
\begin{align*}
g_x \, A (g_x^{-1}x) = A(x) \quad \Rightarrow \quad g_x \, A(x) = A(x).
\end{align*}
This means that at that point $x$, $A(x) = f(x) x$, where $f(x)$ is any arbitrary function of $x$. Finally, if we consider any $g \in O(3)$ with $A$ having the above expression, we obtain
\begin{align*}
g \, f(g^{-1} x) \left( g^{-1} x \right) = f(g^{-1}x) x = f(x) x , \quad \text{ for all } g \in O(3).
\end{align*}
This means that $A(x) = f(r) x$ is a normal vector to the sphere, depending only of the radius of the sphere. Furthermore, we immediately notice that $A$ is a conservative field and therefore $\nabla \times A = B = 0$. We remark that this result was already obtained in \cite[Theorem 1.3]{ClappSzulkin}. Then physically, \eqref{eq:initialproblem} is equivalent to a problem without magnetic potential. In particular, the concentration on spheres of the solutions of \eqref{eq:initialproblem} is only driven by the scalar potential $V$ and we are exactly on the case studied by Ambrosetti, Malchiodi and Ni in \cite{AmbrosettiMalchiodiNi2003}.

\smallskip

A physically relevant case occurs in $\mathbb{R}^3$ in presence of magnetic and electric potentials having cylindrical symmetries. In that setting, we obtain a new surprising result for \eqref{eq:initialproblem}. We prove that the existence and the concentration of semiclassical bound states is influenced by the magnetic field when the concentration occurs on a circle.   We conjecture that this result should also occur in more general situations. More specifically, we consider the class of invariant scalar potentials and equivariant magnetic potentials under the action of the group
\begin{equation} \label{gruppo}
G:=\{g_\alpha \in O(3),\ \alpha \in [0, 2\pi[\,\},
\end{equation}
where
\begin{align*}
g_\alpha =
\begin{pmatrix}
\cos \alpha &- \sin \alpha & 0 \\
\sin \alpha &\cos \alpha & 0 \\
0 & 0 & \pm 1 \\
\end{pmatrix}.
\end{align*}
Namely, we assume that $A=(A_1, A_2, A_3) \in C^1(\mathbb{R}^3,\mathbb{R}^3)$ satisfies \eqref{eq:equivariantA} for every $g\in G$ given by \eqref{gruppo}. 
If we use the cylindrical coordinates $(x_1,x_2,x_3)= (\rho \cos \theta, \rho \sin \theta , x_3)$, the condition \eqref{eq:equivariantA} can be rewritten as
\begin{align*}
& A_1(\rho, \theta - \alpha, \pm x_3) =  \cos \alpha \, A_1 (\rho, \theta, x_3) + \sin \alpha \, A_2(\rho, \theta, x_3) \\
& A_2 (\rho, \theta - \alpha, \pm x_3) = -  \sin \alpha \, A_1(\rho, \theta , x_3) + \cos \alpha \, A_2 (\rho, \theta , x_3) \\
& A_3 (\rho, \theta - \alpha, \pm x_3) = \pm  A_3 (\rho, \theta, x_3).
\end{align*}
If we denote by
\begin{align*}
\textbf{e}_{\tau} = \left( - \sin \theta, \cos \theta, 0 \right), \qquad \textbf{e}_n = \left( \cos \theta, \sin \theta, 0 \right), \qquad \textbf{e}_3 = \left( 0, 0, 1 \right)
\end{align*}
an orthonormal basis of $\mathbb{R}^3$, we therefore infer that $A$ has the form
\begin{align*}
A(\rho, \theta, x_3) = \phi(\rho, |x_3|) \, \textbf{e}_{n} \, + \,  c(\rho, |x_3|) \, \textbf{e}_{\tau} \, + \, A_3(\rho, x_3) \, \textbf{e}_3, 
\end{align*}
for some functions $\phi, c \in C^1(\mathbb{R}^+ \times \mathbb{R}^+)$ and some $A_3 \in C^1(\mathbb{R}^+ \times \mathbb{R})$ which is odd in $x_3$.
The typical example $\phi \equiv 0 \equiv A_3$ and $c= b \rho / 2$, $b \in \mathbb{R} \backslash \{0\}$ corresponds to the constant magnetic field $B = b$ in the direction $x_3$ which is the simplest but also one of the more relevant case.

Next, we consider nonnegative cylindrically invariant potentials $V \in C(\mathbb{R}^3 \backslash \{0\})$, i.e. $V( g x)  = V(x) $ for every $g\in G$. This is equivalent to assume that $V$ depends only on $\rho$ and $|x_3|$. Moreover, we impose a growth condition at infinity when $p \in (2,4]$ :\\[-2mm]
\begin{itemize}
	\item[($V^\infty$)] there exists $\alpha \leq 2$ such that $\displaystyle \liminf_{|x| \to + \infty} V(x) |x|^\alpha > 0$.
\end{itemize}
When $p > 4$, we do not impose this restriction so that for instance one can deal with fast-decaying potentials or even compactly supported potentials. Nonetheless, as we will see later, we cannot consider $V \equiv 0$. 

\medbreak
At the origin, we do not require any specific assumptions on $V$. For instance, $V$ can behave singularly at the origin, or be locally bounded at the origin. However, if $V$ has a singularity, one can single out the Hardy potential as a threshold behaviour as in \cite{B-DC-VS}. If we assume in addition that \\[-2mm]
\begin{itemize}
	\item[($V^0$)] there exists $\alpha \geq 2$ such that $ \displaystyle \liminf_{|x|\to 0} V(x) |x|^\alpha > 0$,
\end{itemize}
then one can deduce a strong flatness of the solutions at the origin which depends on the order of the singularity.

\smallskip

We will look for solutions $u:\mathbb{R}^{3} \rightarrow\mathbb{C}$
of the problem
\begin{equation}
\left\{
\begin{array}
[c]{l}%
\left( i \hbar \nabla+A\right)  ^{2}u+Vu=\left\vert u\right\vert^{p-2}u, \\
u \in L^{2}(\mathbb{R}^{3},\mathbb{C}), \quad \left( i \hbar  \nabla + A \right) u \in L^{2}(\mathbb{R}^{3},\mathbb{C}^{3}).
\end{array}
\right.  \label{prop}%
\end{equation}
which satisfy the condition
\begin{equation*}
u(gx)= u(x)\text{ \ \ \ for all }g\in G,\text{ }x\in\mathbb{R}^{3},
\end{equation*}
and concentrate around a circle for small $\hbar > 0$. To this aim, we introduce the concentration function $\mathcal{M} \, : \, \mathbb{R}^+ \times \mathbb{R}^+ \rightarrow \mathbb{R}^+$ defined by 
\begin{align} \label{eq:concentration}
\mathcal{M}(\rho, |x_3|) = 2 \pi \rho \left[ c^2(\rho, |x_3|) + V(\rho,|x_3|) \right]^{\frac{2}{p-2}} \mathcal{E}(0,1),
\end{align}%
where  $\mathcal{E}(0,1)$ is a positive unrelevant constant  (see Section \ref{section:limit-problem} for more details).

Denoting by $\mathcal{H} \subset \mathbb{R}^3$ the $1$-dimensional vectorial subspace spanned by $\textbf{e}_3$, and by $\mathcal{H}^{\perp}$ its orthogonal complement, 
we assume the existence of a smooth bounded open $G$-invariant set $\Lambda \subset \mathbb{R}^3$ such that $\bar{\Lambda} \cap \mathcal{H} = \emptyset$, $\Lambda \cap \mathcal{H}^{\perp} \neq \emptyset$. By $G$-invariant, we mean that one has $g(\Lambda) = \Lambda$ for every $g \in G$.
Furthermore we assume that
\begin{align} \label{eq:condition_sur_Lambda}
\inf_{\Lambda \cap \mathcal{H}^{\perp}} \mathcal{M} <  \inf_{\partial \Lambda \cap \mathcal{H}^\perp} \mathcal{M} \quad \text{ and } \quad \inf_{\Lambda \cap \mathcal{H}^\perp} \mathcal{M} < 2 \inf_{\Lambda} \mathcal{M},
\end{align}
whereas
\begin{align} \label{eq:condition_sur_Lambda2}
\inf_{\bar{\Lambda}}V > 0.
\end{align}
Observe that the second assumption in \eqref{eq:condition_sur_Lambda} is in fact not restrictive if we take $\Lambda$  sufficiently small, since $\Lambda$ is smooth and $V$ is continuous in $\bar{\Lambda}$.

\smallskip
Using the facts that $A$ and $V$ have cylindrical symmetries, the equation in \eqref{prop} can be reduced to a problem in $\R^2$. Let $\rho_0 >0$ be a fixed radius and denote by ${A_0}:\mathbb{R}^2 \rightarrow \mathbb{R}^2$ the constant magnetic potential defined by  
\[
{A_0} = (\phi(\rho_0,0),0) \quad \text{ and } \quad  a_0 =  c^2(\rho_0,0) + V(\rho_0,0).
\]
We introduce the following two-dimensional problem 
\begin{align} \label{eq:limitproblem1}
( i \nabla + {A_0})^2 u + a_0 u = |u|^{p-2} u, \qquad  y=(y_1,y_2) \in \mathbb{R}^2
\end{align}
which can be regarded as a limiting problem for \eqref{prop}.   
Following the approach in \cite{B-DC-VS}, we will obtain the existence of cylindrically symmetric solutions of \eqref{eq:initialproblem} concentrating around circles in ${\Lambda \cap \mathcal{H}^\perp}$ for $\hbar>0$ small.

We stress that the two-dimensional limiting problem \eqref{eq:limitproblem1}, as well as the concentration function $\mathcal{M}$, takes into account the magnetic field  which will therefore influence the location of the concentration set of the semiclassical solutions of \eqref{eq:initialproblem}. This feature is new and, up to our knowledge, different from all the previous results in literature when dealing with an exterior magnetic field. 
Moreover, if we let $A_\tau (\rho) = A(\rho, \theta, 0) \cdot \textbf{e}_{\tau} = c(\rho, 0)$ be the tangential component of $A(\rho, \theta, 0)$ and  $A_n (\rho) = A(\rho, \theta, 0) \cdot \textbf{e}_n = \phi(\rho, 0)$ be its normal component, the solution of the two-dimensional limit problem \eqref{eq:limitproblem1} is given by $e^{i \left( A_n(\rho_0), 0 \right) \cdot y} w$, $y \in \mathbb{R}^2$, where $w$ is the ground state solution of 
\begin{align*}
- \Delta w + a_0 w = |w|^{p-2} w, \quad y \in \mathbb{R}^2.
\end{align*}
In this equation, $a_0 = c^2(\rho_0,0) + V(\rho_0,0) = A^2_\tau(\rho_0) + V(\rho_0,0)$. Therefore, our result below shows that the location of the concentration of the semiclassical bound states is influenced by the tangential component of $A$ and by the scalar potential $V$, while the phase factor of the semiclassical wave depends on the normal component of $A$. We conjecture that this is a general fact and that it is not just a consequence of the symmetry assumptions. 
 
\medbreak
 
In order to state our main result, we introduce some notations and tools adapted to the cylindrical symmetry of the problem. First, for $y,z \in \mathbb{R}^3$, we define the pseudometric
\begin{align*}
\mathrm{d}_{cyl}(y,z) = \left( (\rho_y - \rho_z)^2 +( y_3 - z_3)^2 \right)^{1/2},
\end{align*}
where $\rho_y = (y_1^2 + y_2^2)^{1/2}$ and $\rho_z = (z_1^2 + z_2^2)^{1/2}$. This function accounts for the distance between two circles. Then, for $r >0$ and $x \in \mathbb{R}^3$, we denote by $B_{cyl}(x, r)$ the ball (which is torus shaped)
\begin{align*}
B_{cyl}(x,r) = \left\{ y \in \mathbb{R}^3 \, | \, \mathrm{d}_{cyl}(x,y) < r \right\}.
\end{align*}

Our main theorem states, for $\hbar$ sufficiently small, the existence of solutions of $(\ref{prop})$ that concentrate around a circle $S^1_\hbar$ in the plane $x_{3}=0$, centered at the origin and of radius $\rho_\hbar$, where $\rho_{\hbar}$ converges to a minimizer of $\mathcal M$ in $\Lambda \cap \mathcal{H}^{\perp}$.

\begin{theorem} \label{theorem:main}
Let $p>2$. Let $V \in C(\mathbb{R}^3\backslash \{0\})$ and $A \in C^{1}(\mathbb{R}^3, \mathbb{R}^3)$ be such that $V (g x) = V(x)$ and $g \, A (g^{-1}x) = A(x)$, for every $g \in G$ defined in \eqref{gruppo}. Moreover, if $p\in (2,4)$, we suppose $V$ satisfies $(V^\infty)$. Assume that there exists a bounded smooth $G$-invariant set $\Lambda \subset \mathbb{R}^3$ such that \eqref{eq:condition_sur_Lambda} and \eqref{eq:condition_sur_Lambda2} are satisfied. Then there exists $\hbar_0 >0$ such that for every $0 <\hbar < \hbar_0$ 
\begin{itemize}
\item[(i)] the problem $(\ref{prop})$ has at least one solution $u_{\hbar} \in C^{1,\alpha}_{\text{loc}}(\mathbb{R}^3 \backslash \{0\})$ such that
$u_{\hbar} (g x) = u_{\hbar} (x)$ for every $g \in G$. 
\end{itemize}
Moreover, for every $0 <  \hbar <  \hbar_0$, $|u_{\hbar}|$ attains its maximum at some
$x_\hbar = (\rho_\hbar \cos \theta, \rho_\hbar \sin \theta, x_{3,\hbar}) \subset \Lambda$, $\theta \in [0, 2\pi [$, such that
\begin{itemize}
\item[(ii)]
$ \displaystyle \liminf_{\hbar \to 0} |u_{\hbar}(x_\hbar)| > 0.$
\item[(iii)]$ \displaystyle \lim_{\hbar \to 0} \mathcal{M}(x_\hbar) = \inf_{\Lambda \cap \mathcal{H}^\perp} \mathcal{M}$;
\item[(iv)] $\displaystyle \limsup_{\hbar \to 0} \frac{\mathrm{d}_{cyl}(x_\hbar, \mathcal{H}^\perp)}{\hbar} < + \infty$  , that is $x_{3,\hbar} \to 0$; \\
\item[(v)] $ \displaystyle \liminf_{\hbar \to 0} \mathrm{d}_{cyl}(x_\hbar, \partial \Lambda)> 0$.
\end{itemize}
Finally, 
for every   $0 <  \hbar < \hbar_0$  there exist  $C > 0$ and $\lambda > 0$ such that
the following asymptotic holds
\begin{itemize}
\item[(vi)] \hfil $\displaystyle 0 < |u_{\hbar}(x)| \leq C \exp \left( - \frac{\lambda}{\hbar} 
\frac{\mathrm{d}_{cyl}(x,x_\hbar)}{1 + \mathrm{d}_{cyl}(x,x_\hbar)} \right) (1 + |x|)^{-1} \quad \forall x \in \mathbb{R}^3\setminus\{0\}.$ \hfil
\end{itemize}
\end{theorem}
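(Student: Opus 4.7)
The plan is to follow the penalization scheme of Bonheure--Di Cosmo--Van Schaftingen, adapted to the magnetic setting by exploiting the $G$-invariance. Using Palais' principle of symmetric criticality, I would work on the subspace $\mathcal{H}^1_{A,G}$ of $G$-invariant functions in the magnetic Sobolev space associated with \eqref{prop}, on which the cylindrical rotational symmetry provides the extra compactness that replaces full radial compactness. To cope with the lack of coercivity outside $\Lambda$ (where $V$ may vanish at infinity or be singular, and where Sobolev embeddings into $L^p$ may fail), I would introduce a Del Pino--Felmer type penalized nonlinearity $g_\hbar(x,s)$ that coincides with $s^{p-1}$ on $\Lambda$ and is replaced by $\min(s^{p-1},\mu V(x)s)$ on $\mathbb{R}^3\setminus\Lambda$ for a small $\mu\in(0,1)$, yielding a modified functional $J_\hbar^P$ for which the Palais--Smale condition holds on $\mathcal{H}^1_{A,G}$.

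Next I would produce a mountain-pass critical point $u_\hbar$ of $J_\hbar^P$ and estimate its min-max level. Using the equivariance of $A$ to factor out the phase, the natural test function is of the form
\[
\xi(x)\,e^{i\,A_n(\rho_0)\cdot(x-x_0)/\hbar}\,w\bigl((x-x_0)/\hbar\bigr),
\]
with $w$ the ground state of $-\Delta w+a_0 w=|w|^{p-2}w$ in $\mathbb{R}^2$, centered on a circle in $\Lambda\cap\mathcal{H}^\perp$ that minimizes $\mathcal{M}$; averaging over $G$ and invoking the diamagnetic inequality yields $c_\hbar\leq\hbar^2\bigl(\inf_{\Lambda\cap\mathcal{H}^\perp}\mathcal{M}\bigr)\bigl(\tfrac{1}{2}-\tfrac{1}{p}\bigr)+o(\hbar^2)$. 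A matching lower bound via concentration-compactness then extracts, after gauge correction $v_\hbar(y)=u_\hbar(x_\hbar+\hbar y)\,e^{-iA(x_\hbar)\cdot y}$, a nontrivial limit $v$ solving the two-dimensional problem \eqref{eq:limitproblem1}. The first inequality in \eqref{eq:condition_sur_Lambda} forces $x_\hbar$ to accumulate at an interior minimizer of $\mathcal{M}$ on $\Lambda\cap\mathcal{H}^\perp$, giving (iii)--(v), while the second strict inequality rules out splitting into two bumps on distinct $G$-orbits of circles, so that the limit profile is a single magnetic ground state, yielding (ii).

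The main obstacle is the last step: removing the penalization by showing that $|u_\hbar|$ is so small outside a small cylindrical neighbourhood of $x_\hbar$ that the truncated term actually coincides with $|u|^{p-2}u$. This reduces to establishing the pointwise bound (vi). I would exploit the diamagnetic inequality $|\nabla|u_\hbar||\leq|(i\hbar\nabla+A)u_\hbar|$ to transform the complex equation into a scalar comparison of the form $-\hbar^2\Delta|u_\hbar|+\tfrac{1}{2}V|u_\hbar|\leq 0$ on the complement of a fixed cylindrical neighbourhood of $x_\hbar$, and then construct an explicit supersolution whose profile is the product of the $\mathrm{d}_{cyl}$-exponential in (vi) with the weight $(1+|x|)^{-1}$ that absorbs the behaviour of $V$ at infinity under $(V^\infty)$. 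The delicate point is to tune the supersolution to the pseudometric $\mathrm{d}_{cyl}$, which reflects the torus geometry of the concentration set rather than the Euclidean distance to a point, and to keep the construction compatible with the possible singularity of $V$ at the origin. Once (vi) is in force, the penalized and original nonlinearities coincide for small $\hbar$, so that $u_\hbar$ solves \eqref{prop}, and assertion (i) follows from standard elliptic regularity applied to the real and imaginary parts.
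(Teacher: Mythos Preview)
Your overall architecture matches the paper's, but there is a genuine gap in the penalization you propose. You take the pure del Pino--Felmer truncation $\min(s^{p-1},\mu V(x)s)$ outside $\Lambda$. Under the hypotheses of the theorem, $V$ may vanish at infinity (or even be compactly supported when $p>4$), and then this truncation does \emph{not} restore the Palais--Smale condition, nor does it give enough to run the barrier argument at the end. The paper instead uses the Moroz--Van Schaftingen refinement
\[
g_\varepsilon(x,s)=\chi_\Lambda(x)\,s^{(p-2)/2}+(1-\chi_\Lambda(x))\min\bigl\{\varepsilon^2 H(x)+\mu V(x),\,s^{(p-2)/2}\bigr\},
\qquad H(x)=\frac{\kappa}{|x|^2\bigl((\log|x|)^2+1\bigr)^{(1+\beta)/2}},
\]
with $\kappa<\tfrac14$. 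The extra term $\varepsilon^2 H$ is what drives (a) the tail estimates in the Palais--Smale proof, via the magnetic Hardy inequality, and (b) the comparison principle for $-\Delta-H$ that produces the $(1+|x|)^{-1}$ decay in (vi). In particular, the factor $(1+|x|)^{-1}$ in (vi) does \emph{not} come from $(V^\infty)$ as you suggest; it holds without any assumption on $V$ at infinity and is a consequence of the $H$-term. The hypothesis $(V^\infty)$ for $p\in(2,4)$ enters only at the very last step, when one checks that $|u_\varepsilon|^{p-2}\le \varepsilon^2 H+\mu V$ on $\mathbb{R}^3\setminus\Lambda$ so that the penalized and original nonlinearities agree.

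A second, more technical point: the scalar differential inequality on $|u_\varepsilon|$ is obtained from the magnetic \emph{Kato} inequality $\varepsilon^2\Delta|u|\ge -\operatorname{Re}\bigl(\operatorname{sign}(u)\,(i\varepsilon\nabla+A)^2 u\bigr)$, not from the diamagnetic inequality (which is only a first-order pointwise bound). With Kato's inequality and the penalized equation one gets
\[
-\varepsilon^2(\Delta+H)|u_\varepsilon|+(1-\mu)V|u_\varepsilon|\le 0\quad\text{in }\mathbb{R}^3\setminus B_{cyl}(x_\varepsilon,\varepsilon r),
\]
and the supersolution is built by gluing a $\cosh$-profile in $\mathrm{d}_{cyl}$ inside $\Lambda$ with the minimal solution $\Psi_\varepsilon$ of $-\varepsilon^2(\Delta+H)\Psi_\varepsilon+(1-\mu)V\Psi_\varepsilon=0$ on $\mathbb{R}^3\setminus\Lambda$, $\Psi_\varepsilon=1$ on $\partial\Lambda$; the bound $\Psi_\varepsilon\le C(1+|x|)^{-1}$ again relies on the positivity of $-\Delta-H$. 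Once you insert the $\varepsilon^2 H$ term into your penalization and replace the diamagnetic step by Kato's inequality, the rest of your outline is essentially the paper's proof.
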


\begin{remark}
The last assertion {\rm (vi)} in Theorem \ref{theorem:main} combines a concentration estimate with a decay as $|x|\to\infty$. This decay at infinity is not enough to guarantee that our solutions are $L^2$ (since the ambiant space is $\R^3$).
However, this is only a rough estimate valid without further assumption on $V$ and it can be improved when assuming a slow decay of $V$ at infinity. Namely if we assume that ($V^\infty$) holds, then the solutions decay fast enough to be square integrable and thus they  are true bounded state solutions. 
We mention also that when ($V^0$) holds, we can estimate the flatness of the solution at the origin. We refer to Lemma \ref{lemma:barrier_function} for more details.
\end{remark}

\begin{example}
As a striking example, we observe that the presence of a constant magnetic field can produce a concentration phenomenon when coupled with a decaying electric potential. If we consider for instance the cubic nonlinearity, i.e. $p=4$, and the cylindrical Hardy potential $V(\rho)=1/\rho^2$,  $\rho^{2}=x_{1}^{2}+x_{2}^{2}$, there is no concentrated bound state (probably no bound state at all) of the equation without magnetic field. The presence of a constant magnetic field $B = b$ in the direction $x_3$ produces a solution that concentrates on the circle of radius $2^{1/2}/(3b^2)^{1/4}$.  
\end{example}

\medbreak
As a particular case of Theorem \ref{theorem:main}, we deduce the somewhat surprising result which states that when the scalar potential $V$ is constant, the existence and the location of our semiclassical states is only driven by the magnetic field.
\begin{corollary}
Assume $V \equiv \omega$, where $\omega$ is a positive constant. Then, under the assumptions of Theorem \ref{theorem:main}, the concentration of the solutions $u_{\hbar}$ holds at $$\inf_{\Lambda \cap \mathcal{H}^\perp} \mathcal{M} = \inf_{\Lambda \cap \mathcal{H}^\perp} 2 \pi \rho \left( c^2 + \omega \right)^{\frac{2}{p-2}} \mathcal{E}(0,1).$$
\end{corollary}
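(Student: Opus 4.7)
The plan is to apply Theorem \ref{theorem:main} directly, so the argument reduces to verifying that its hypotheses are preserved under the specialization $V(x) \equiv \omega > 0$ and to simplifying the concentration function \eqref{eq:concentration} accordingly.

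First I would check the assumptions. The constant function $V \equiv \omega$ obviously belongs to $C(\mathbb{R}^3 \setminus \{0\})$ and is $G$-invariant for every $g \in G$ defined in \eqref{gruppo}. The decay condition $(V^\infty)$, required only when $p \in (2,4)$, is trivially satisfied with $\alpha = 0$, since $\liminf_{|x|\to\infty} V(x) = \omega > 0$. The positivity condition \eqref{eq:condition_sur_Lambda2} holds because $\inf_{\bar{\Lambda}} V = \omega > 0$. All the remaining structural hypotheses on $A$, on $G$, and on the smooth $G$-invariant domain $\Lambda$ satisfying \eqref{eq:condition_sur_Lambda} are assumed as part of the phrase \emph{under the assumptions of Theorem \ref{theorem:main}}, so there is nothing further to verify.

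Next I would invoke Theorem \ref{theorem:main} to obtain, for $\hbar$ sufficiently small, a solution $u_\hbar$ whose modulus attains its maximum at some $x_\hbar \in \Lambda$ satisfying conclusion (iii), namely
$$\lim_{\hbar \to 0} \mathcal{M}(x_\hbar) = \inf_{\Lambda \cap \mathcal{H}^\perp} \mathcal{M}.$$
Substituting $V \equiv \omega$ into the definition \eqref{eq:concentration} of the concentration function gives the explicit expression
$$\mathcal{M}(\rho, |x_3|) = 2\pi \rho \left( c^2(\rho, |x_3|) + \omega \right)^{\frac{2}{p-2}} \mathcal{E}(0,1),$$
and taking the infimum over $\Lambda \cap \mathcal{H}^\perp$ yields precisely the formula stated in the corollary.

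There is no genuine technical obstacle here: the corollary is a direct specialization of the main result and its proof amounts to an observation. The point I would underline, and the reason to single this case out, is that with $V$ constant the only non-constant terms in $\mathcal{M}|_{\mathcal{H}^\perp}$ are the geometric factor $2\pi \rho$ and the squared tangential component $c^2(\rho,0)$ of the magnetic potential. Hence both the existence and the precise location of the concentration circle are driven exclusively by the external magnetic field, in sharp contrast with the purely electric regime $A \equiv 0$ where no such bound state is expected when $V$ is constant and positive.
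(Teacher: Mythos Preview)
Your proposal is correct and matches the paper's treatment: the corollary is stated without proof precisely because it is an immediate specialization of Theorem \ref{theorem:main}, obtained by substituting $V\equiv\omega$ into the concentration function \eqref{eq:concentration}. Your verification of the hypotheses (in particular $(V^\infty)$ with $\alpha=0$ and \eqref{eq:condition_sur_Lambda2}) and your concluding remark about the magnetic field alone driving the concentration are exactly the content the paper intends to highlight.
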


Finally, we also remark that Theorem \ref{theorem:main} do not require an upper bound on $p > 2$. Henceforth, we can treat critical and supercritical exponent problems by looking for cylindrically symmetric solutions of \eqref{eq:initialproblem}.

\medbreak

The paper is organized as follows. In Section \ref{section:variational-framework}, we give the variational framework and some related properties. Section \ref{section:limit-problem} is devoted to the study of the two dimensional limit problem. A penalization scheme is introduced in Section \ref{section:penalization-scheme} and  the existence of least-energy solutions is proved. The asymptotics of those solutions is studied in Section \ref{section:asymptotic-analysis}, while their concentration behaviour is established in Section \ref{section:solution-initial-problem}, showing that the solutions of the penalized problem solve the original one and therefore concluding the proof of Theorem \ref{theorem:main}. Finally, Section \ref{section:another-class-symmetric-solutions} is devoted to another class of symmetric solutions in the special case of a Lorentz type magnetic potential. These solutions are defined through an ansatz proposed by Esteban and Lions in \cite[Section 4.3]{EsLi}.

\section{The variational framework} \label{section:variational-framework}

In this section, we will fix our functional setting. In particular, we will define the Hilbert spaces adapted to the presence of a magnetic potential. We emphasize that in all Hilbert spaces we use, the scalar product will always be taken as the real scalar product, i.e. for every $z,w \in \mathbb{C}$, the scalar product will be defined by $\left(w | z \right) = \text{Re} (w \bar{z})$.

\subsection{The magnetic spaces}

Let $N \geq 2$. For $A \in L^2_{\text{loc}}(\mathbb{R}^N, \mathbb{R}^N)$, we define the space $\mathcal{D}^{1,2}_{A,\varepsilon}(\mathbb{R}^N,\mathbb{C})$ as the closure of $C^{\infty}_{0}(\mathbb{R}^N,\mathbb{C})$ with respect to the norm defined through
\begin{align*}
\|u\|_{\mathcal{D}^{1,2}_{A,\varepsilon}}^2:=\int_{\mathbb{R}^N} |(i \varepsilon \nabla + A) u|^2.
\end{align*}
Similarly, $\mathcal{D}^{1,2}(\mathbb{R}^N,\mathbb{C})$ (resp. $\mathcal{D}^{1,2}(\mathbb{R}^N,\mathbb{R})$) is the closure of $C^{\infty}_{0}(\mathbb{R}^N,\mathbb{C})$ (resp. $C^{\infty}_{0}(\mathbb{R}^N,\mathbb{R})$) with respect to the norm defined through
\begin{align*}
\|u\|_{\mathcal{D}^{1,2}}^2:=\int_{\mathbb{R}^N} |\nabla u |^2.
\end{align*}
Remember that the Sobolev inequality implies that $\mathcal{D}^{1,2}(\mathbb{R}^N,\mathbb{C})$ (resp. $\mathcal{D}^{1,2}(\mathbb{R}^N,\mathbb{R})$) is embedded in $L^{2^\star}(\mathbb{R}^N, \mathbb{C})$ (resp. $L^{2^\star}(\mathbb{R}^N, \mathbb{R})$). 
We also consider the space
\begin{align*}
H^1_{A,\varepsilon}(\mathbb{R}^N, \mathbb{C}) = \left\{ u \in L^2(\mathbb{R}^N, \mathbb{C}) \, | \, (i \varepsilon \nabla + A)u \in L^{2}(\mathbb{R}^N, \mathbb{C}^N) \right\},
\end{align*}
endowed with the norm
\begin{align*}
\| u \|_{H^1_{A,\varepsilon}}^2 = \int_{\mathbb{R}^N}  |(i \varepsilon \nabla + A) u|^2 + |u|^2 .
\end{align*}
We remark that, in general, this space is not embedded in $H^1(\mathbb{R}^N,\mathbb{C})$ (and inversely). However if $u \in H^1_{A,\varepsilon}(\mathbb{R}^N, \mathbb{C})$, then $|u| \in H^1(\mathbb{R}^N, \mathbb{R})$. This is the diamagnetic inequality that we recall here.
\begin{lemma}[Diamagnetic inequality]
Let $A : \mathbb{R}^N \rightarrow \mathbb{R}^N$ be in $L^2_{\text{loc}}(\mathbb{R}^N, \mathbb{R}^N)$ and let $u \in \mathcal{D}^{1,2}_{A,\varepsilon}(\mathbb{R}^N, \mathbb{C})$. Then, $|u| \in \mathcal{D}^{1,2}(\mathbb{R}^N, \mathbb{R})$ and the diamagnetic inequality
\begin{align} \label{eq:diamagnetic}
\varepsilon \left|\nabla |u|(x) \right| \leq \left| (i \varepsilon \nabla + A) u(x) \right|
\end{align}
holds for almost every $x \in \mathbb{R}^N$ and for every $\varepsilon > 0$.
\end{lemma}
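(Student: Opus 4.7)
The plan is classical: first establish the pointwise inequality for smooth compactly supported functions via a regularization of the absolute value, and then extend to the full space $\mathcal{D}^{1,2}_{A,\varepsilon}(\mathbb{R}^N,\mathbb{C})$ by density.

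For $u \in C^\infty_0(\mathbb{R}^N,\mathbb{C})$, I would introduce the regularization $|u|_\delta := \sqrt{|u|^2 + \delta^2}$, which is smooth for every $\delta>0$. A direct computation gives
\[
\nabla |u|_\delta \;=\; \frac{\mathrm{Re}(\bar u\,\nabla u)}{|u|_\delta}.
\]
The algebraic key step is the identity
\[
\varepsilon\,\mathrm{Re}(\bar u\,\nabla u) \;=\; \mathrm{Im}\bigl(\bar u\,(i\varepsilon\nabla + A)u\bigr),
\]
which I would verify by writing $\nabla u = \frac{1}{i\varepsilon}\bigl[(i\varepsilon\nabla+A)u - Au\bigr]$ and observing that $A|u|^2$ is real-valued, so its contribution drops out when taking the real part after dividing by $i$. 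Combining this with the Cauchy--Schwarz inequality and the pointwise bound $|u|\leq|u|_\delta$ yields
\[
\varepsilon\,\bigl|\nabla|u|_\delta\bigr| \;\leq\; \frac{|u|}{|u|_\delta}\,\bigl|(i\varepsilon\nabla+A)u\bigr| \;\leq\; \bigl|(i\varepsilon\nabla+A)u\bigr|
\]
pointwise on $\mathbb{R}^N$. Letting $\delta\downarrow 0$, one has $|u|_\delta\to|u|$ uniformly and $\nabla|u|_\delta$ converges to the weak gradient of $|u|$ both in $L^2$ and almost everywhere, which gives the inequality for every $u\in C^\infty_0$.

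To extend to arbitrary $u \in \mathcal{D}^{1,2}_{A,\varepsilon}(\mathbb{R}^N,\mathbb{C})$, I would pick a sequence $(u_n)\subset C^\infty_0$ converging to $u$ in $\mathcal{D}^{1,2}_{A,\varepsilon}$. Applying the $C^\infty_0$ case to the differences $u_n-u_m$ shows that $(|u_n|)$ is Cauchy in $\mathcal{D}^{1,2}(\mathbb{R}^N,\mathbb{R})$, hence converges to some $v\in\mathcal{D}^{1,2}(\mathbb{R}^N,\mathbb{R})$. Along a subsequence one has $u_n\to u$ almost everywhere by the Sobolev embedding into $L^{2^\star}$, which forces $v=|u|$ almost everywhere; in particular $|u|\in\mathcal{D}^{1,2}(\mathbb{R}^N,\mathbb{R})$. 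Extracting a further subsequence so that $\nabla|u_n|\to\nabla|u|$ and $(i\varepsilon\nabla+A)u_n\to(i\varepsilon\nabla+A)u$ almost everywhere, one passes to the limit in the pointwise inequality for $u_n$ and concludes.

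I do not foresee a serious obstacle: the diamagnetic inequality is a well-known consequence of the algebraic identity above (essentially the trick underlying Kato's inequality) combined with routine density and regularization arguments. The only delicate point is to ensure, both as $\delta\downarrow 0$ and as $n\to\infty$, that the weak gradients coincide with the almost-everywhere limits of the classical gradients, which is handled by the $L^2$-Cauchy property together with the standard extraction of almost-everywhere convergent subsequences.
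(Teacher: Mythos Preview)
Your argument shares the same algebraic core as the paper's proof: the identity $\varepsilon\,\mathrm{Re}(\bar u\,\nabla u)=\mathrm{Im}\bigl(\bar u\,(i\varepsilon\nabla+A)u\bigr)$, followed by $|\mathrm{Im}(z)|\le|z|$. The paper, however, is far more direct: it simply writes
\[
\varepsilon\,\nabla|u|=\mathrm{Im}\Bigl((i\varepsilon\nabla+A)u\,\tfrac{\bar u}{|u|}\Bigr)\quad\text{a.e.}
\]
(relying implicitly on the standard chain rule for $|u|$ in Sobolev spaces) and concludes in one line, without any regularization or density argument. Your route via $|u|_\delta$ and approximation is a legitimate and more self-contained way to justify that formula, but it is considerably more elaborate than what the paper does.

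There is one genuine slip in your density step. Applying the $C^\infty_0$ case to $u_n-u_m$ controls $\|\nabla|u_n-u_m|\|_{L^2}$, not $\|\nabla|u_n|-\nabla|u_m|\|_{L^2}$; these are unrelated in general, so you have not shown that $(|u_n|)$ is Cauchy in $\mathcal{D}^{1,2}$. The fix is easy: the diamagnetic inequality for each $u_n$ gives a uniform bound on $\|\nabla|u_n|\|_{L^2}$, so a subsequence converges weakly, and the weak limit must be $\nabla|u|$ since $|u_n|\to|u|$ in $L^{2^\star}$. This yields $|u|\in\mathcal{D}^{1,2}$. For the pointwise inequality it is then cleaner not to pass to the limit at all but to invoke directly the a.e.\ formula $\nabla|u|=\mathrm{Re}\bigl(\tfrac{\bar u}{|u|}\nabla u\bigr)\chi_{\{u\ne0\}}$ for the limit function $u$ and apply your algebraic identity once more---which is exactly the paper's shortcut.
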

\begin{proof}
We compute
\begin{align*}
\varepsilon \nabla |u| = \text{Im} \left( i \varepsilon \nabla u \frac{\bar{u}}{|u|} \right) = \text{Im}\left((i \varepsilon \nabla + A)u \, \frac{\bar{u}}{|u|} \right) \quad \text{ a.e. }
\end{align*}
because $A$ is real-valued. We conclude using the fact that $|\text{Im}(z)| \leq |z|$ for any complex number $z$.
\end{proof}
Using \eqref{eq:diamagnetic}, we can verify that, for every $u \in \mathcal{D}^{1,2}_{A,\varepsilon}(\mathbb{R}^N,\mathbb{C})$,
\begin{align} \label{eq:diamagnetic_epsilon}
\varepsilon^2 \int_{\mathbb{R}^N} | \nabla |u||^2 \, \mathrm{d}x \leq \int_{\mathbb{R}^N} | (i \varepsilon \nabla + A) u|^2 \, \mathrm{d}x,
\end{align}
for any $\varepsilon > 0$. 

\medbreak

In some particular cases, the spaces $H^1_{A,\varepsilon}(\mathbb{R}^N,\mathbb{C})$ and $H^1(\mathbb{R}^N,\mathbb{C})$ are equivalent as for instance if $A$ is bounded. The following lemma is proved for example in \cite[Lemma 3.1]{CingolaniSecchi2002}.
\begin{lemma} \label{lemma:equivalence_spaces}
Let $A : \mathbb{R}^N \rightarrow \mathbb{R}^N$ be such that $|A| \leq C$ for all $x \in \mathbb{R}^N$, $C \geq 0$. Then, the spaces $H^1_{A,\varepsilon}(\mathbb{R}^N, \mathbb{C})$ and $H^1(\mathbb{R}^N,\mathbb{C})$ are equivalent.
\end{lemma}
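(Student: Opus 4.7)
The plan is to show the two norms are equivalent through two pointwise estimates that rely only on the uniform bound $|A|\leq C$. Since multiplication by $A$ continuously sends $L^2(\mathbb{R}^N,\mathbb{C})$ into $L^2(\mathbb{R}^N,\mathbb{C}^N)$ under this hypothesis, the lower-order perturbation $u \mapsto Au$ can be absorbed in either direction. This then yields the set-theoretic equality $H^1_{A,\varepsilon}(\mathbb{R}^N,\mathbb{C}) = H^1(\mathbb{R}^N,\mathbb{C})$ and the equivalence of their norms simultaneously.

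For the inclusion $H^1 \hookrightarrow H^1_{A,\varepsilon}$, I would apply the triangle inequality pointwise to get
\begin{align*}
|(i\varepsilon \nabla + A)u|^2 \leq 2\varepsilon^2|\nabla u|^2 + 2|A|^2|u|^2 \leq 2\varepsilon^2|\nabla u|^2 + 2C^2|u|^2 \quad \text{a.e.},
\end{align*}
and integrate, producing a bound $\|u\|_{H^1_{A,\varepsilon}}^2 \leq M_1 \|u\|_{H^1}^2$ with $M_1 = M_1(\varepsilon,C)$.

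For the reverse inclusion $H^1_{A,\varepsilon} \hookrightarrow H^1$, the key observation is that if $u \in L^2$ and $(i\varepsilon \nabla + A)u \in L^2$, then $Au \in L^2$ by the boundedness of $A$, so that $i\varepsilon \nabla u = (i\varepsilon \nabla + A)u - Au$ makes sense in $L^2$ (in the distributional sense), confirming $u \in H^1$. Applying the triangle inequality once again one obtains
\begin{align*}
\varepsilon^2 |\nabla u|^2 \leq 2|(i\varepsilon \nabla + A)u|^2 + 2C^2|u|^2 \quad \text{a.e.},
\end{align*}
which after integration gives $\|u\|_{H^1}^2 \leq M_2 \|u\|_{H^1_{A,\varepsilon}}^2$ for some $M_2 = M_2(\varepsilon,C)$.

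There is no genuine obstacle here: the proof is essentially two applications of the triangle inequality, with the boundedness of $A$ ensuring that the first-order magnetic term can be treated as a controlled perturbation. The only mildly delicate point is to justify that the distributional identity $i\varepsilon \nabla u = (i\varepsilon \nabla + A)u - Au$ is legitimate when $u$ is merely in $H^1_{A,\varepsilon}$, but this is immediate from $Au \in L^2$ since $A$ is a bounded measurable multiplier.
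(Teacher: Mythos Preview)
Your proposal is correct and is the standard argument: two applications of the triangle inequality together with the boundedness of $A$ give the two-sided norm equivalence, and the distributional identity $i\varepsilon\nabla u=(i\varepsilon\nabla+A)u-Au$ with $Au\in L^2$ settles the set-theoretic equality. The paper does not actually prove this lemma; it simply refers the reader to \cite[Lemma 3.1]{CingolaniSecchi2002}, whose proof is precisely the elementary argument you outline.
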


Finally, we introduce the following Hilbert space
\begin{align*}
H^1_{A,V,\varepsilon}(\mathbb{R}^N,\mathbb{C}) = \left\{ u \in \mathcal{D}^{1,2}_{A,\varepsilon}(\mathbb{R}^N,\mathbb{C}) \, | \, \int_{\mathbb{R}^N} V(x) |u|^2 < \infty \right\},
\end{align*}
endowed with the norm
\begin{align*}
\| u \|^2_{H^1_{A,V,\varepsilon}} = \int_{\mathbb{R}^N} | (i \varepsilon \nabla + A) u|^2 + V(x) |u|^2.
\end{align*}
In what follows, for simplicity, we write  $\|u\|_\varepsilon$ instead of $\| u \|_{H^1_{A,V,\varepsilon}}$.

\subsection{Hardy and Kato inequalities}

In dimensions $N \ge 3$, the Hardy inequality for functions $u \in \mathcal{D}^{1,2}(\mathbb{R}^N,\mathbb{C})$ writes 
\begin{align*} 
\left( \frac{N-2}{2} \right)^2 \int_{\mathbb{R}^N} \frac{|u(x)|^2}{|x|^2} \, \mathrm{d}x \leq \int_{\mathbb{R}^N} |\nabla u|^2,
\end{align*}
and for functions $u \in \mathcal{D}^{1,2}_{A,\varepsilon}(\mathbb{R}^N,\mathbb{C})$
\begin{align} \label{eq:Hardy_inequality_magnetic}
\varepsilon^2 \left( \frac{N-2}{2} \right)^2 \int_{\mathbb{R}^N} \frac{|u(x)|^2}{|x|^2} \, \mathrm{d}x \leq \varepsilon^2 \int_{\mathbb{R}^N} |\nabla |u||^2 \leq \int_{\mathbb{R}^N} | ( i \varepsilon \nabla + A)u|^2,
\end{align}
for any $\varepsilon > 0$. Furthermore, we recall the following Kato's inequalities. First, for functions $u \in L^1_{\text{loc}}(\mathbb{R}^N,\mathbb{C})$ with $\nabla u \in L^1_{\text{loc}}(\mathbb{R}^N, \mathbb{C}^N)$, we define
\begin{align*}
\text{sign}(u)(x) = \left\{ \begin{aligned} & \frac{\bar{u}(x)}{|u(x)|} \quad & u(x) \neq 0 \\
                                           & 0 \quad & u(x) = 0.
                                           \end{aligned} \right.
\end{align*}
We have
\begin{align} \label{eq:kato_inequality}
\Delta |u| \geq  \text{Re} \left( \text{sign}(u) \Delta u \right).
\end{align}
We also have a similar inequality in presence of a magnetic potential $A \in L^2_{\text{loc}}(\mathbb{R}^N, \mathbb{R}^N)$,
\begin{align} \label{eq:magnetic_kato_inequality}
\varepsilon^2 \Delta |u| \geq - \text{Re} \left(\text{sign}(u) (i \varepsilon \nabla + A)^2 u \right).
\end{align}

Throughout the text, we will use an auxiliary Hardy type potential. This potential was first introduced in \cite{MorozVanschaftingen2009-2,MorozVanschaftingen2009-1} to extend the penalization method of del Pino and Felmer to compactly supported potentials $V$. For $N \ge 3$, we define the function $H : \mathbb{R}^N \rightarrow \mathbb{R}$ by
\begin{align*}
H(x) = \frac{\kappa}{|x|^2 \left( (\log|x|)^2 + 1 \right)^{\frac{1+\beta}{2}}},
\end{align*}
for $\beta > 0$ and $0 < \kappa < \left(\frac{N-2}{2}\right)^2$. Notice that, for all $x \in \mathbb{R}^N$, we have
\begin{align} \label{eq:inequality_on_H}
H(x) \leq \frac{\kappa}{|x|^2}, \quad \text{ or } \quad H(x) \leq \frac{\kappa}{|x|^2 \left| \log |x| \right|^{1 + \beta}}.
\end{align}
The interest of this auxiliary potential comes mainly from the following comparison principle for $-\Delta - H$ which was proved in \cite{B-DC-VS}.
\begin{lemma} \label{lemma:comparison}
Let $N \geq 3$ and $\Omega \subset \mathbb{R}^N \setminus \{0\}$ be a smooth domain. Let $v,w \in H^1_{\text{loc}}(\Omega, \mathbb{R})$ be such that $\nabla (w-v)_{-} \in L^{2}(\Omega)$, $(w-v)_{-} / |x| \in L^{2}(\Omega)$ and 
\begin{align*} 
- \Delta w - H(x) w \geq -\Delta v - H(x) v, \quad \forall x \in \Omega.
\end{align*}
Moreover, if $\partial \Omega \neq \emptyset$, assume that $w \geq v $ on $\partial \Omega$. Then, $w \geq v$ in $\Omega$.
\end{lemma}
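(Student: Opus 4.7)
Plan of proof. Set $u := w - v$ and $\varphi := u_{-} = \max(-u, 0) \ge 0$. The hypothesis reads, weakly on $\Omega$,
\begin{equation*}
-\Delta u \ge H(x)\, u,
\end{equation*}
while the boundary condition (if $\partial\Omega\neq\emptyset$) gives $\varphi = 0$ on $\partial\Omega$. My goal is to use $\varphi$ as a test function, combine with the Hardy inequality, and use the strict bound $\kappa < ((N-2)/2)^2$ to force $\varphi \equiv 0$.

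First I would verify that $\varphi$ is an admissible test function. Since $\nabla(w-v)_{-}\in L^{2}(\Omega)$ and $(w-v)_{-}/|x|\in L^{2}(\Omega)$ by assumption, and since $\varphi$ vanishes on $\partial\Omega$, one has $\varphi\in H^{1}_{0}(\Omega)$. To handle the possibility that $\Omega$ is unbounded, introduce a standard radial cutoff $\eta_{R}\in C^{\infty}_{c}(\R^{N})$ with $\eta_{R}\equiv 1$ on $B_{R}$, $\eta_{R}\equiv 0$ outside $B_{2R}$, and $|\nabla\eta_{R}|\le 2/R$. Test the weak inequality against $\eta_{R}^{2}\varphi$, use $\nabla u\cdot\nabla \varphi = -|\nabla \varphi|^{2}$ on $\{u<0\}$ and $u\,\varphi = -\varphi^{2}$, and pass to the limit $R\to\infty$ via dominated convergence (the integrability of $|\nabla\varphi|^{2}$ and $\varphi^{2}/|x|^{2}$ makes the cross terms $\nabla\eta_{R}\cdot\nabla\varphi$ vanish). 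This yields the clean inequality
\begin{equation*}
\int_{\Omega}|\nabla\varphi|^{2}\dx \le \int_{\Omega} H(x)\,\varphi^{2}\dx.
\end{equation*}

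The second step is to plug in the pointwise bound $H(x)\le \kappa/|x|^{2}$ from \eqref{eq:inequality_on_H} and apply the classical Hardy inequality on $\R^{N}$ (extending $\varphi$ by $0$ outside $\Omega$, which is legitimate because $\varphi\in H^{1}_{0}(\Omega)$ and $0\notin\Omega$):
\begin{equation*}
\int_{\Omega} H(x)\,\varphi^{2}\dx \;\le\; \kappa\int_{\R^{N}}\frac{\varphi^{2}}{|x|^{2}}\dx \;\le\; \frac{4\kappa}{(N-2)^{2}}\int_{\R^{N}}|\nabla\varphi|^{2}\dx.
\end{equation*}
Combining both displays gives $\bigl(1-4\kappa/(N-2)^{2}\bigr)\int_{\Omega}|\nabla\varphi|^{2}\dx\le 0$. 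Since $\kappa<((N-2)/2)^{2}$, the prefactor is strictly positive, so $\nabla\varphi\equiv 0$; together with the vanishing trace on $\partial\Omega$ (or decay at infinity guaranteed by $\varphi/|x|\in L^{2}$) this forces $\varphi\equiv 0$, i.e.\ $w\ge v$ in $\Omega$.

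The step I expect to be the most delicate is the first one: justifying rigorously that $\varphi = (w-v)_{-}$ can be used as a test function when $v,w$ are merely in $H^{1}_{\text{loc}}(\Omega)$ and $\Omega$ need not be bounded. One must check that the truncation $\eta_{R}\varphi$ lies in $H^{1}_{0}(\Omega\cap B_{2R})$, that the chain rule $\nabla\varphi = -\nabla u\,\chi_{\{u<0\}}$ applies, and that the cutoff error $\int|\nabla\eta_{R}|^{2}\varphi^{2}\lesssim R^{-2}\int_{B_{2R}\setminus B_{R}}\varphi^{2}$ vanishes as $R\to\infty$. The latter follows from the integrability hypothesis $\varphi/|x|\in L^{2}(\Omega)$, since on the annulus $B_{2R}\setminus B_{R}$ we have $1/R^{2}\le 4/|x|^{2}$. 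Once these technicalities are handled, the Hardy–type absorption argument closes the proof immediately.
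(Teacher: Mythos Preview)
Your argument is correct and is the standard proof of this comparison principle: test the weak inequality against the (truncated) negative part $\eta_R^2(w-v)_-$, pass to the limit using the integrability hypotheses $\nabla(w-v)_-\in L^2$ and $(w-v)_-/|x|\in L^2$ to kill the cutoff error, and absorb via Hardy and the strict bound $\kappa<((N-2)/2)^2$. The paper does not actually supply its own proof of this lemma; it states the result and cites \cite{B-DC-VS}, where precisely this argument is carried out, so your proposal matches the intended proof.
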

We also point out that $H$ can be compared to the Hardy potential $C/|x|^2$ which is a critical potential, both at zero and at infinity. Indeed, if $V$ behaves like $1/|x|^\alpha$ at infinity, for $\alpha \geq 2$, then we have an equivalence between $H^1_{A,V,\varepsilon}(\mathbb{R}^N,\mathbb{C})$ and $\mathcal{D}^{1,2}_{A,\varepsilon}(\mathbb{R}^N,\mathbb{C})$. It means that the condition $\int_{\mathbb{R}^N} V(x) |u|^2 < \infty$ is unnecessary in that case. The same holds true if $V$ is singular at $0$ and behaves as $1/|x|^\alpha$ for $\alpha \leq 2$.

\subsection{Notations adapted to the cylindrical symmetry}

From now on, we deal with dimension $N=3$. As we will work with functions having cylindrical symmetry, that is functions such that $u \circ g = u$ for $g\in G$, where $G$ is defined in $(\ref{gruppo})$, the significant variables are $\rho \in \mathbb{R}^+$ and $x_3 \in \mathbb{R}$, where $\rho = (x_1^2 + x_2^2)^{1/2}$, $x=(x_1,x_2,x_3)$. The angular variable $\theta \in [0,2\pi)$ plays no role. However, even if those functions only depend on $\rho$ and $x_3$, there are still functions defined in $\mathbb{R}^3$ and with some abuse of notations we will write either $u(\rho, x_3)$ or $u(x_1,x_2,x_3)$ depending on the situation. 

We also recall the distance adapted to cylindrical symmetry already mentioned in the introduction: for $y, z \in \mathbb{R}^3$,
\begin{align*} 
\mathrm{d}_{cyl}(y,z) = \left( (\rho_y - \rho_z)^2 +( y_3 - z_3)^2 \right)^{1/2},
\end{align*}
for $\rho_y = (y_1^2 + y_2^2)^{1/2}$ and $\rho_z = (z_1^2 + z_2^2)^{1/2}$, as well as the cylindrical ball
\begin{align*} 
B_{cyl}(x,r) = \left\{ y \in \mathbb{R}^3 \, | \, \mathrm{d}_{cyl}(x,y) < r \right\},
\end{align*}
for $r > 0$ and $x \in \mathbb{R}^3$.

The following lemma gives us some compact embedding of the magnetic Sobolev spaces with cylindrical symmetry.
\begin{lemma} \label{lemma:cylindrical-embeddings-compact}
Assume that $\Omega \subset \mathbb{R}^3$ is an open bounded set such that
\begin{align*}
g(\Omega) = \Omega \text{ for every }g\in G \quad \text{ and } \quad 0 < \rho_0 < \rho < \rho_1 \quad \text{for every } (\rho \cos \theta, \rho \sin \theta, x_3)\in\Omega . 
\end{align*}
Then, the space $\left\{ u \in H^1_{A,\varepsilon}(\Omega,\mathbb{C}) \, | \, u \circ g = u \text{ for every }g\in G \right\}$ is compactly embedded in $L^q(\Omega)$, for $2 \leq q < + \infty$.
\end{lemma}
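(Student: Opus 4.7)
The approach is to reduce this compactness statement to the classical two-dimensional Rellich-Kondrachov theorem in two stages: first, remove the magnetic potential by a norm-equivalence argument on the bounded domain $\Omega$; second, collapse the angular variable by cylindrical symmetry, so that one works on the planar projected domain.

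For the first reduction, I would observe that $A$ is bounded on $\overline\Omega$ since $A\in C^{1}(\mathbb{R}^{3},\mathbb{R}^{3})$ and $\Omega$ is bounded. Expanding
\[
|(i\varepsilon\nabla+A)u|^{2}=\varepsilon^{2}|\nabla u|^{2}+2\varepsilon\operatorname{Im}(\bar u\,A\cdot\nabla u)+|A|^{2}|u|^{2}
\]
and controlling the cross term by Young's inequality, one obtains a two-sided estimate
\[
c_{1}(\varepsilon)\|u\|_{H^{1}(\Omega,\mathbb{C})}\le \|u\|_{H^{1}_{A,\varepsilon}(\Omega,\mathbb{C})}\le c_{2}(\varepsilon)\|u\|_{H^{1}(\Omega,\mathbb{C})},
\]
with constants depending on $\varepsilon$ and $\|A\|_{L^{\infty}(\Omega)}$. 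This is just the local version of Lemma \ref{lemma:equivalence_spaces}, and it reduces the question to the compact embedding of the $G$-invariant subspace of $H^{1}(\Omega,\mathbb{C})$ into $L^{q}(\Omega,\mathbb{C})$.

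For the second reduction, any $G$-invariant $u$ depends only on $(\rho,x_{3})$ and may be identified with a function $\tilde u$ on the planar open bounded set $\tilde\Omega=\{(\rho,x_{3})\in\mathbb{R}^{+}\times\mathbb{R}:(\rho\cos\theta,\rho\sin\theta,x_{3})\in\Omega\}$. A direct Cartesian computation yields $|\nabla u|^{2}=|\partial_{\rho}\tilde u|^{2}+|\partial_{x_{3}}\tilde u|^{2}$, and cylindrical coordinates give
\[
\|u\|_{L^{q}(\Omega)}^{q}=2\pi\int_{\tilde\Omega}\rho\,|\tilde u|^{q}\,\mathrm{d}\rho\,\mathrm{d}x_{3},\qquad \|\nabla u\|_{L^{2}(\Omega)}^{2}=2\pi\int_{\tilde\Omega}\rho\bigl(|\partial_{\rho}\tilde u|^{2}+|\partial_{x_{3}}\tilde u|^{2}\bigr)\,\mathrm{d}\rho\,\mathrm{d}x_{3}.
\]
Because $0<\rho_{0}\le\rho\le\rho_{1}$ on $\tilde\Omega$, the weight $\rho$ is absorbed into universal constants, yielding the equivalences $\|u\|_{H^{1}(\Omega,\mathbb{C})}\sim\|\tilde u\|_{H^{1}(\tilde\Omega,\mathbb{C})}$ and $\|u\|_{L^{q}(\Omega)}\sim\|\tilde u\|_{L^{q}(\tilde\Omega)}$. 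Hence a bounded $G$-invariant sequence $(u_{n})$ in $H^{1}_{A,\varepsilon}(\Omega,\mathbb{C})$ corresponds to a bounded sequence $(\tilde u_{n})$ in $H^{1}(\tilde\Omega,\mathbb{C})$.

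Since $\tilde\Omega\subset\mathbb{R}^{2}$ is a bounded open planar set and the Sobolev critical exponent in dimension two is $+\infty$, the classical Rellich-Kondrachov theorem extracts a subsequence of $(\tilde u_{n})$ converging in $L^{q}(\tilde\Omega,\mathbb{C})$ for every $2\le q<+\infty$, and the norm equivalence lifts this to $L^{q}(\Omega,\mathbb{C})$-convergence of $(u_{n})$. The main technical point is the two-sided weight estimate, for which the hypothesis $\rho_{0}>0$ is essential: without a positive lower bound on $\rho$ the measure $\rho\,\mathrm{d}\rho\,\mathrm{d}x_{3}$ degenerates on the symmetry axis and the reduction to flat planar $H^{1}$ would have to be replaced by a weighted Sobolev theory (recovering in that case only the usual restriction $q<2^{\star}=6$ from three-dimensional Rellich-Kondrachov). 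A minor subtlety is that $\tilde\Omega$ need not inherit Lipschitz regularity from $\Omega$; this is bypassed by extending $\tilde u_{n}$ to a slightly enlarged smooth planar domain before invoking Rellich-Kondrachov.
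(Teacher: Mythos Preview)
Your proof is correct and follows essentially the same route as the paper: first invoke the equivalence $H^{1}_{A,\varepsilon}(\Omega)\simeq H^{1}(\Omega)$ from Lemma~\ref{lemma:equivalence_spaces} (valid since $A$ is bounded on the bounded set $\Omega$), then use cylindrical symmetry to identify $G$-invariant functions with functions on a bounded planar domain, absorb the Jacobian weight $\rho$ via the bounds $\rho_{0}\le\rho\le\rho_{1}$, and conclude by the two-dimensional Rellich--Kondrachov theorem. Your write-up is in fact a bit more careful than the paper's, in that you explicitly highlight the role of the lower bound $\rho_{0}>0$ and flag the potential issue of regularity of the projected domain $\tilde\Omega$; the paper simply records the change-of-variables identity for the $H^{1}$-norm and appeals to ``the compact embedding in dimension~2''.
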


\begin{proof}
First, since $\Omega$ is bounded and $A \in C^1(\mathbb{R}^3,\mathbb{R}^3)$, we have seen in Lemma \ref{lemma:equivalence_spaces} that this space is equivalent to $\left\{ u \in H^1(\Omega, \mathbb{C}) \, | \, u \circ g = u \text{ for every }g\in G \right\}$. Since $u$ depends only on $\rho$ and $x_3$, we can write the square of the $H^1$-norm of $u$ as
\begin{align*}
\int_{\Omega}  \left( |\nabla u|^2 + |u|^2 \right) \, \mathrm{d}x_1\mathrm{d}x_2\mathrm{d}x_3 = 2\pi \int_{\Omega_0} \left( \left| \partial_\rho u\right|^2 + \left|\partial_{x_3} u \right|^2 + |u|^2 \right) \, \rho \, \mathrm{d}\rho \mathrm{d}x_3,
\end{align*}
where $\Omega_0$ is the parametrization of $\Omega$ in the $\rho,x_3$ variables. Now, take a bounded sequence $(u_n)_n\subset H^1_{A,\varepsilon}(\Omega,\mathbb{C})$. Considering each $u_n$ as a function of the two variables $\rho,x_3$ in $\mathbb{R}^2$, we infer that the sequence is bounded as a sequence $(u_n)_n\subset H^1(\Omega_0)$. We can then use the compact embedding in dimension $2$ to conclude.
\end{proof}

\section{The limit problem} \label{section:limit-problem}

Because of the symmetry, our solutions will concentrate on circles and the limit problem will hold in $\mathbb{R}^2$. 
The aim of this section is to describe such a limit problem. Consider a constant potential ${A_0}:\mathbb{R}^2 \rightarrow \mathbb{R}^2$ and a positive constant $a_0 > 0$. The equation
\begin{align} \label{eq:limitproblem}
( i \nabla + {A_0})^2 u + a_0 u = |u|^{p-2} u, \quad   y=(y_1,y_2) \in \mathbb{R}^2
\end{align}
will be referred to as the limit equation associated to the problem \eqref{eq:initialproblem}. For solutions concentrating around a circle of radius $\rho_0>0$, we will have
\begin{align*}
{A_0} = (\phi(\rho_0,0),0) \quad \text{ and } \quad  a_0 =  c(\rho_0,0)^2 + V(\rho_0,0).
\end{align*}  
By lemma \ref{lemma:equivalence_spaces}, the weak solutions of \eqref{eq:limitproblem} are critical points of the functional $\mathcal{J}_{a_0}^{A_0}: H^1(\mathbb{R}^2, \mathbb{C}) \rightarrow \mathbb{R}$ defined by
\begin{align} \label{eq:limit_functional}
\mathcal{J}_{a_0}^{A_0}(u) = \frac{1}{2} \int_{\mathbb{R}^2} \left[ |(i \nabla + A_0) u|^2 + a_0 |u|^2 \right] \, \mathrm{d}y - \frac{1}{p} \int_{\mathbb{R}^2} |u|^p \, \mathrm{d}y.
\end{align}
Any nontrivial critical point $u \in H^1(\mathbb{R}^2, \mathbb{C})$ of $J_{a_0}^{A_0}$ belongs to the Nehari manifold
\begin{align*} 
\mathcal{N}_{a_0}^{A_0} = \left\{ u \in H^1(\mathbb{R}^2,\mathbb{C}) \, |  \, u \not\equiv 0 \text{ and } \langle (\mathcal{J}_{a_0}^{A_0})^\prime (u), u \rangle = 0 \right\}.
\end{align*}
A solution $u \in H^1(\mathbb{R}^2, \mathbb{C})$ is called a least energy solution, or ground state, of \eqref{eq:limitproblem} if 
\begin{align*}
\mathcal{J}_{a_0}^{A_0}(u) = \inf_{v \in \mathcal{N}_{a_0}^{A_0}} \mathcal{J}_{a_0}^{A_0}(v).
\end{align*}

The following lemma states that any least energy solution of the limit problem \eqref{eq:limitproblem} is real up to a change of gauge and a complex phase.
\begin{lemma} \label{lemma:least-energy-solution-limit-problem}
Suppose $v$ is a least energy solution of equation \eqref{eq:limitproblem}. Then
\begin{align*}
v(y) = w(y - y_0) e^{i \alpha} e^{i A_0 \cdot y},
\end{align*}
for some $\alpha \in \mathbb{R}$, $y_0 \in \mathbb{R}^2$ and where $w$ is the unique radially symmetric real positive solution of the scalar equation
\begin{align} \label{eq:reallimitequation}
- \Delta w + a_0 w = |w|^{p-2}w \quad \text{ in } \mathbb{R}^2.
\end{align}
\end{lemma}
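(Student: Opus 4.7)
The plan is to reduce the magnetic equation to a scalar one via a gauge transformation, and then to identify any complex least energy solution of the scalar equation with a translate of the unique positive radial real ground state, up to a constant phase. Since $A_0$ is constant, it is the gradient of $y \mapsto A_0 \cdot y$, so I would substitute $v(y) = e^{i A_0 \cdot y} \tilde v(y)$ and use the expansion $(i\nabla + A_0)^2 = -\Delta + 2 i A_0 \cdot \nabla + |A_0|^2$ to get, by direct computation, $(i\nabla + A_0)^2 v = - e^{i A_0 \cdot y} \Delta \tilde v$. Hence $\tilde v$ solves $-\Delta \tilde v + a_0 \tilde v = |\tilde v|^{p-2} \tilde v$ in $\mathbb{R}^2$, and gauge-invariance of the integrand gives $\mathcal{J}^{A_0}_{a_0}(v) = \mathcal{J}^{0}_{a_0}(\tilde v)$; the map $v \mapsto \tilde v$ also maps Nehari manifold to Nehari manifold, so $\tilde v$ is a complex least energy solution of the non-magnetic equation.

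\textbf{Modulus is a real ground state.} I would next compare the infima of $\mathcal{J}^0_{a_0}$ over the real and complex Nehari manifolds $\mathcal{N}^0_{a_0}(\mathbb{R})$ and $\mathcal{N}^0_{a_0}(\mathbb{C})$. For any $u \in \mathcal{N}^0_{a_0}(\mathbb{C})$, the pointwise diamagnetic inequality (with $A \equiv 0$) gives $\||u|\|_{H^1}^2 \le \|u\|_{H^1}^2 = \|u\|_p^p$, so there is $t_0 \in (0,1]$ with $t_0 |u| \in \mathcal{N}^0_{a_0}(\mathbb{R})$ and
\[
\mathcal{J}^0_{a_0}(t_0 |u|) = \Bigl(\tfrac12 - \tfrac1p\Bigr) t_0^p \|u\|_p^p \le \mathcal{J}^0_{a_0}(u).
\]
The reverse inequality between the two infima is trivial. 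Applied to $u = \tilde v$, which already attains the complex infimum, the inequalities must be equalities; this forces $t_0 = 1$ (so $|\tilde v| \in \mathcal{N}^0_{a_0}(\mathbb{R})$) and $\|\nabla |\tilde v|\|_{L^2} = \|\nabla \tilde v\|_{L^2}$, so $|\tilde v|$ itself is a real least energy solution.

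\textbf{Uniqueness and phase rigidity.} By classical symmetry results (Gidas--Ni--Nirenberg) and uniqueness (Kwong), the positive radial real ground state $w$ of $-\Delta w + a_0 w = w^{p-1}$ is unique up to translation, so $|\tilde v| = w(\cdot - y_0)$ for some $y_0 \in \mathbb{R}^2$. Because $w > 0$ everywhere, $\tilde v$ never vanishes, and elliptic regularity allows me to write $\tilde v = |\tilde v|\, e^{i\phi}$ with $\phi \in C^1(\mathbb{R}^2,\mathbb{R})$. The pointwise identity
\[
|\nabla \tilde v|^2 = |\nabla |\tilde v||^2 + |\tilde v|^2 |\nabla \phi|^2
\]
combined with the integral equality $\int |\nabla \tilde v|^2 = \int |\nabla |\tilde v||^2$ proved in step two yields $|\tilde v|^2 |\nabla \phi|^2 \equiv 0$, so $\phi$ is a constant $\alpha$. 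Undoing the gauge transformation gives $v(y) = w(y - y_0)\, e^{i\alpha}\, e^{i A_0 \cdot y}$, as claimed.

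\textbf{Main obstacle.} The delicate step is the second one: showing that the modulus of a complex least energy solution itself attains the real least energy level. This passes through a Nehari projection of $|\tilde v|$ and uses the diamagnetic inequality in the (trivial) case $A \equiv 0$ together with the fact that the two Nehari levels coincide. Once this is established, the phase rigidity in step three is straightforward provided $w$ is strictly positive, which is exactly what the classical uniqueness theory provides.
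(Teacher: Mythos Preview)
Your proof is correct and follows essentially the same approach as the paper: both reduce to the non-magnetic equation via the gauge transformation $v = e^{iA_0\cdot y}\tilde v$, observe this is an isomorphism between the corresponding Nehari manifolds preserving energy, and then characterize the complex least energy solutions of $-\Delta u + a_0 u = |u|^{p-2}u$ as $e^{i\alpha}w(\cdot - y_0)$. The only difference is that the paper cites this last characterization from \cite[Lemma~7]{Kurata}, whereas you supply a self-contained argument via the diamagnetic inequality, Nehari projection of $|\tilde v|$, and the phase-rigidity computation $|\nabla \tilde v|^2 = |\nabla|\tilde v||^2 + |\tilde v|^2|\nabla\phi|^2$; your version is more explicit but not a different strategy.
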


\begin{proof}
First, we consider the functional $\mathcal{J}_{a_0}^0 : H^1(\mathbb{R}^2, \mathbb{C}) \rightarrow \mathbb{R}$ associated to equation \eqref{eq:reallimitequation}
\begin{align*} 
\mathcal{J}_{a_0}^0 (u) = \frac{1}{2} \int_{\mathbb{R}^2} |\nabla u|^2 + a_0 |u|^2 \, \mathrm{d}y - \frac{1}{p} \int_{\mathbb{R}^2} |u|^p \, \mathrm{d}y.
\end{align*}
Again, any nontrivial critical point $u \in H^1(\mathbb{R}^2,\mathbb{C})$ of $\mathcal{J}_{a_0}^0$ belongs to the Nehari manifold $\mathcal{N}_{a_0}^0$. By performing the change of gauge
\begin{align} \label{eq:gauge}
v(y) = e^{i {A_0} \cdot y} u(y)
\end{align}
on functions $v \in \mathcal{N}_{a_0}^{{A_0}}$ and $u \in \mathcal{N}_{a}^0$, we observe that there is an isomorphism between the two Nehari manifolds. Indeed, any least energy solution $v$ of $\mathcal{J}_{a_0}^{A_0}$ provides a least energy solution $u$ of $\mathcal{J}_{a_0}^0$ by \eqref{eq:gauge} and vice-versa.

Since it is well-known, see for example \cite[Lemma 7]{Kurata}, that the set of complex valued least energy solutions $u$ of $\mathcal{J}_{a_0}^0$ can be written as
\begin{align*}
\{u(y) = e^{i \alpha} w(y - y_0),\ \alpha \in \mathbb{R},\ y_0 \in \mathbb{R}^2\},
\end{align*}
the proof is completed.
\end{proof}

We may now define the ground energy function $\mathcal{E} \, : \, \mathbb{R}^2 \times \mathbb{R}^+ \backslash \{0\} \rightarrow \mathbb{R}^+ $ by
\begin{align*}
\mathcal{E}(A_0,a_0) = \inf_{v \in \mathcal{N}_{a_0}^{A_0}} J_{a_0}^{A_0}(v).
\end{align*}
The following lemma gives some properties of this ground energy function. We refer to \cite{B-DC-VS} or \cite{Rabinowitz} for more details.

\begin{lemma}
For every $(A_0,a_0) \in \mathbb{R}^2 \times \mathbb{R}^+ \backslash \{0\}$, $\mathcal{E}({A_0},a_0)$ is a critical value of $\mathcal{J}^{A_0}_{a_0}$ and we have the following variational characterization 
\begin{align*}
\mathcal{E}(0,a_0) = \mathcal{E}(A_0,a_0) = \inf_{v \in H^1(\mathbb{R}^2,\mathbb{C}) \backslash \{ 0 \}}  \max_{t \geq 0} \mathcal{J}^{A_0}_{a_0}(v).
\end{align*}
Moreover, 
\begin{itemize}
\item[(i)] for every ${A_0} \in \mathbb{R}^2$, $a_0 \in \mathbb{R}^+ \backslash \{0\} \mapsto \mathcal{E}({A_0},a_0)$ is continuous;
\item[(ii)] for every ${A_0} \in \mathbb{R}^2$, $a_0  \in \mathbb{R}^+ \backslash \{0\}  \mapsto \mathcal{E}({A_0},a_0)$ is strictly increasing.
\end{itemize}
In fact, for our nonlinearity 
\begin{align} \label{eq:croissance_ground_energy}
\mathcal{E}(0,a_0) = \mathcal{E}({A_0},a_0) = \mathcal{E}({A_0} a_0^{- \frac{1}{p-2}},1) a_0^{\frac{2}{p-2}} = \mathcal{E}(0,1) a_0^{\frac{2}{p-2}}.
\end{align}
\end{lemma}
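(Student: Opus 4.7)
The plan is to reduce everything to the real scalar equation at $a_0 = 1$ by combining a gauge transformation with a scaling, and then to read off all four assertions from standard Nehari manifold theory. The flow is: (i) gauge invariance to eliminate $A_0$; (ii) the Nehari projection to obtain the minimax characterization and a critical value; (iii) an explicit rescaling giving the identity $\mathcal{E}(0,a_0) = a_0^{2/(p-2)} \mathcal{E}(0,1)$; (iv) continuity and strict monotonicity as corollaries.

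First I would check that the gauge map $u(y) \mapsto e^{i A_0 \cdot y} u(y)$ is a Hilbert-space isomorphism of $H^1(\mathbb{R}^2,\mathbb{C})$, which follows from Lemma \ref{lemma:equivalence_spaces} since the constant potential $A_0$ is bounded. A short computation gives the pointwise identity $(i\nabla + A_0)(e^{iA_0 \cdot y} u) = i e^{iA_0 \cdot y} \nabla u$, so that the magnetic gradient norms transform as $|(i\nabla + A_0)(e^{iA_0\cdot y} u)|^2 = |\nabla u|^2$. Consequently this transformation carries $\mathcal{N}^0_{a_0}$ bijectively onto $\mathcal{N}^{A_0}_{a_0}$ and leaves $\mathcal{J}^{A_0}_{a_0}$ invariant, yielding $\mathcal{E}(A_0,a_0) = \mathcal{E}(0,a_0)$ and reducing the rest of the argument to the case $A_0 = 0$.

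Next I would set up the Nehari--minimax machinery in the scalar case. For $v \in H^1(\mathbb{R}^2,\mathbb{C}) \setminus \{0\}$, the one-variable map $t \mapsto \mathcal{J}^0_{a_0}(tv) = \tfrac{t^2}{2}(\|\nabla v\|_2^2 + a_0 \|v\|_2^2) - \tfrac{t^p}{p}\|v\|_p^p$ has a unique strictly positive maximizer $t_v$ since $p > 2$, and $t_v v$ is exactly the intersection of the ray through $v$ with $\mathcal{N}^0_{a_0}$. Minimizing over $v$ then produces the claimed minimax formula (interpreting the statement as $\max_{t\ge 0}\mathcal{J}^{A_0}_{a_0}(tv)$). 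The infimum is attained because by Lemma \ref{lemma:least-energy-solution-limit-problem} every least-energy solution of \eqref{eq:limitproblem} at $A_0 = 0$ reduces to the classical real ground state $w$ of \eqref{eq:reallimitequation}, whose existence and variational characterization are well known. Since this minimizer is a critical point of $\mathcal{J}^0_{a_0}$, the value $\mathcal{E}(0,a_0)$, and hence $\mathcal{E}(A_0,a_0)$, is a critical value.

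Finally, I would verify the scaling identity by direct substitution. Taking $w$ a ground state at $a_0 = 1$ and setting $\tilde w(y) = a_0^{1/(p-2)} w(a_0^{1/2} y)$, one checks that $\tilde w$ solves $-\Delta \tilde w + a_0 \tilde w = |\tilde w|^{p-2} \tilde w$ and that each of the three integrals entering $\mathcal{J}^0_{a_0}(\tilde w)$ picks up the common factor $a_0^{2/(p-2)}$. This yields $\mathcal{E}(0,a_0) = a_0^{2/(p-2)} \mathcal{E}(0,1)$; combined with the gauge invariance established in the first step, the full chain of equalities in the statement follows, the first argument of $\mathcal{E}$ in the middle expression being immaterial. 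Continuity and strict monotonicity in $a_0$ are then obvious consequences since $p > 2$. The only non-routine input is the attainment of the infimum, for which the scalar ground state theory invoked in Lemma \ref{lemma:least-energy-solution-limit-problem} is indispensable; everything else is a short series of substitutions.
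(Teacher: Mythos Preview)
Your proposal is correct. The paper itself does not give a proof of this lemma: it simply states ``We refer to \cite{B-DC-VS} or \cite{Rabinowitz} for more details'' and moves on. Your argument is precisely the standard one those references contain: gauge away the constant $A_0$ via $u\mapsto e^{iA_0\cdot y}u$ (this is exactly the isomorphism used in the proof of Lemma~\ref{lemma:least-energy-solution-limit-problem}), invoke the Nehari projection to get the minimax characterization and attainment, and then use the explicit rescaling $\tilde w(y)=a_0^{1/(p-2)}w(a_0^{1/2}y)$ in dimension~$2$ to obtain the power law, from which continuity and strict monotonicity are immediate. The only point worth making explicit is that the scaling is a bijection of $H^1(\mathbb{R}^2,\mathbb{C})\setminus\{0\}$ (not merely a map sending one solution to another), so it carries the Nehari manifold at $a_0=1$ onto that at $a_0$ and hence transports the \emph{infimum}, not just a critical value; your computation of the common factor $a_0^{2/(p-2)}$ on all three integrals is what makes this work.
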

Finally, the concentration function $\mathcal{M} \, : \, \mathbb{R}^+ \times \mathbb{R}^+ \rightarrow \mathbb{R}^+$, already introduced in \eqref{eq:concentration}, is defined more precisely by
\begin{align*} 
\mathcal{M}(\rho, |x_3|) = 2 \pi \rho \, \mathcal{E}(0, c^2(\rho,|x_3|) + V(\rho,|x_3|)) = 2 \pi \rho \left[ c^2(\rho, |x_3|) + V(\rho,|x_3|) \right]^{\frac{2}{p-2}} \mathcal{E}(0,1) .
\end{align*}

We will look for solutions concentrating around local minima of $\mathcal{M}$.

\section{The penalization scheme}  \label{section:penalization-scheme}

The functional associated to equation \eqref{eq:initialproblem} is given by
\begin{align*}
\int_{\mathbb{R}^3} \left( | (i \varepsilon \nabla + A) u|^2 + V |u|^2 \right) \, \mathrm{d}x - \frac{1}{p} \int_{\mathbb{R}^3} |u|^p \, \mathrm{d}x.
\end{align*}
It is natural to consider this functional in the Sobolev space $H^1_{A,V,\varepsilon}(\mathbb{R}^3, \mathbb{C})$. However, the mere assumptions on $V$, and more particularly the fact that $V$ can decay to zero at infinity, do not ensure that $H^1_{A,V,\varepsilon}(\mathbb{R}^3,\mathbb{C})$ is embedded in the $L^{p}(\mathbb{R}^3,\mathbb{C})$. Then, the last term of the functional is not necessarily finite.
Moreover, even if we assume that $V$ is bounded away from zero, the functional would have a mountain-pass geometry in $H^1_{A,V,\varepsilon}(\mathbb{R}^3,\mathbb{C})$, but the Palais-Smale condition could fail without further specific assumptions on $V$. 

For those reasons, following del Pino and Felmer \cite{DP-F}, we truncate the nonlinear term through a penalization outside the set where the concentration is expected. Basically, the penalization approach consists in modifying the nonlinearity outside the bounded set $\Lambda$, where $\Lambda$ verifies \eqref{eq:condition_sur_Lambda} and \eqref{eq:condition_sur_Lambda2}, in the following way
\begin{align*}
\tilde{f}(x,s) = \min \{ \mu V(x) s, f(s) \},
\end{align*}
where $0 < \mu < 1$. The penalized functional, given by
\begin{align*}
\int_{\mathbb{R}^3} \left( | (i \varepsilon \nabla + A) u|^2 + V |u|^2 \right) \, \mathrm{d}x - \int_{\mathbb{R}^3} \tilde F(|u|) \, \mathrm{d}x,
\end{align*}
where $\tilde F(\tau) = \int_0^\tau \tilde f(s) \, ds$, has the mountain-pass geometry and we recover the Palais-Smale condition thanks to the penalization, so that we can easily deduce the existence of a mountain pass critical point $u$. Then, if we succeed to show that $f(u) \leq \nu V(x)u$ outside the set $\Lambda$, we recover a solution of the initial problem.

We will argue slightly differently for two reasons. The first one is that this approach works fine when $V$ stays bounded away from zero, or at least when $V$ does not converge to fast to zero at infinity. We do not want to restrict our assumptions on the potentials $V$ to this class. To solve this issue, we will add the term $\varepsilon^2 H(x)$ to $V$ in the modified nonlinearity. This penalization approach was first introduced in \cite{MorozVanschaftingen2009-2,MorozVanschaftingen2009-1} as an extension of \cite{BonheureVanschaftingen} and subsequently used in \cite{B-DC-VS}. The second reason, as already said before, is that our functions are complex-valued. We will then perform the penalization on the modulus of the unknown.

\subsection{The penalized functional}

We fix $\mu \in (0,1)$. We define the penalized nonlinearity $g_{\varepsilon} \, : \, \mathbb{R}^3 \times \mathbb{R}^+ \rightarrow \mathbb{R}$ by
\begin{align} \label{eq:penalized_nonlinearity}
g_{\varepsilon}(x,s) = \chi_{\Lambda}(x) f(s) + (1 - \chi_{\Lambda}(x)) \min\left\{ (\varepsilon^2 H(x) + \mu V(x) ) , f(s) \right\}
\end{align}
for $f(s) = s^{\frac{p-2}{2}}$. Let $G_{\varepsilon}(x,s) = \frac{1}{2} \int_{0}^{s} g_{\varepsilon}(x,\sigma) \, \mathrm{d}\sigma$. There exists $2 < \theta \leq p$ such that
\begin{align} \label{eq:property_g1}
 0 < \theta G_{\varepsilon}(x,s) \leq g_{\varepsilon}(x,s) s & \quad \quad \forall x \in \Lambda, \, \forall s> 0 ,\\ \label{eq:property_g2}
 0 < 2 G_{\varepsilon}(x,s) \leq g_{\varepsilon}(x,s) s \leq (\varepsilon^2 H(x) + \mu V(x) ) s & \quad \quad \forall x \notin \Lambda , \, \forall s > 0.
\end{align} 
Moreover, we have that 
\begin{align} \label{eq:property_g3}
g_\varepsilon(x,s^2) \quad  \text{ is nondecreasing } \quad \forall x \in \mathbb{R}^3,
\end{align}
which is a useful property, see for example \cite{Rabinowitz}.

In the following we look for  cylindrically symmetric solutions of the penalized equation
\begin{align} \label{eq:penalized_problem}
(i \varepsilon \nabla + A)^2 u + V(x) u = g_{\varepsilon}(x,|u|^2)u, \quad x \in \mathbb{R}^3.
\end{align}
Let us define the penalized functional $\mathcal{J}_{\varepsilon} : H^1_{A,V,\varepsilon}(\mathbb{R}^3,\mathbb{C}) \rightarrow \mathbb{R}$
\begin{align*}
\mathcal{J}_{\varepsilon}(u) = \frac{1}{2} \int_{\mathbb{R}^3} | (i \varepsilon \nabla + A) u|^2 + V(x) |u|^2 - \int_{\mathbb{R}^3} G_{\varepsilon}(x, |u|^2),
\end{align*}
and the space
\begin{align*}
\mathcal{X}_\varepsilon = \left\{ u \in H^1_{A,V,\varepsilon}(\mathbb{R}^3,\mathbb{C}) \, | u \circ g = u, \ \forall g \in G \right\}.
\end{align*}
By the principle of symmetric criticality \cite{Palais}, the critical points of $\mathcal{J}_\varepsilon$ in $\mathcal{X}_\varepsilon$ are weak solutions of the penalized problem \eqref{eq:penalized_problem}, having cylindrical symmetry. Thanks to the properties \eqref{eq:property_g1} and \eqref{eq:property_g2}, the functional has a mountain pass geometry. Indeed, it clearly displays a local minimum at $u=0$, while the infimum is $- \infty$. Standard arguments imply then the existence of a Palais-Smale sequence $(u_n)_n \subset \mathcal{X}_\varepsilon$ for $\mathcal{J}_{\varepsilon}$, that is
\begin{align*}
\mathcal{J}_{\varepsilon}(u_n) \leq C \quad \text{ and } \quad \mathcal{J}_{\varepsilon}^{\prime}(u_n) \rightarrow 0 \text{ as } n \to \infty.
\end{align*}
To secure the existence of a weak solution of \eqref{eq:penalized_problem} for every $\varepsilon>0$, it only remains to prove that $\mathcal{J}_{\varepsilon}$ satisfies the Palais-Smale condition, i.e. each Palais-Smale sequence possesses a convergent subsequence. This is our next aim.

\subsection{The Palais-Smale condition}

\begin{lemma} \label{lemma:existence_solutions_penalized_problem}
For every $\varepsilon > 0$, every Palais-Smale sequence for $\mathcal{J}_{\varepsilon}$ in $\mathcal{X}_\varepsilon$ contains a convergent subsequence.
\end{lemma}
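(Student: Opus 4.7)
Let $(u_n) \subset \mathcal{X}_\varepsilon$ be a Palais--Smale sequence for $\mathcal{J}_\varepsilon$ at some level $c$. The proof follows the classical three-step Palais--Smale strategy: boundedness, identification of the weak limit as a critical point, and strong convergence. The two essential technical ingredients are (i) the auxiliary Hardy potential $H$: thanks to $\kappa<1/4$ and the magnetic Hardy inequality \eqref{eq:Hardy_inequality_magnetic}, it lets one absorb the ``tail'' of the penalized nonlinearity into a strict fraction of $\|\cdot\|_\varepsilon^2$; and (ii) the cylindrical compact embedding of Lemma \ref{lemma:cylindrical-embeddings-compact}, which yields compactness from $\mathcal{X}_\varepsilon$ into $L^q$ of any bounded $G$-invariant open set away from the $x_3$-axis --- in particular, thanks to $\bar{\Lambda}\cap\mathcal{H}=\emptyset$, of $\Lambda$ itself.

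\textbf{Boundedness.} I would compute $\mathcal{J}_\varepsilon(u_n) - \frac{1}{p}\langle \mathcal{J}'_\varepsilon(u_n),u_n\rangle$, taking $\theta = p$ in \eqref{eq:property_g1}. On $\Lambda$ the nonlinear contribution is nonnegative by the Ambrosetti--Rabinowitz condition, while on $\mathbb{R}^3\setminus\Lambda$ the domination $2G_\varepsilon(x,s)\leq (\varepsilon^2 H(x)+\mu V(x))s$ from \eqref{eq:property_g2} yields
$$C + o(1)\,\|u_n\|_\varepsilon \;\geq\; \left(\tfrac{1}{2}-\tfrac{1}{p}\right)\|u_n\|_\varepsilon^2 \;-\; \tfrac{1}{2}\int_{\mathbb{R}^3\setminus\Lambda}\bigl(\varepsilon^2 H(x) + \mu V(x)\bigr)|u_n|^2\,\mathrm{d}x.$$
The magnetic Hardy inequality \eqref{eq:Hardy_inequality_magnetic} combined with $H\leq\kappa/|x|^2$ gives $\int(\varepsilon^2 H + \mu V)|u_n|^2 \leq \max(4\kappa,\mu)\|u_n\|_\varepsilon^2$, so provided $\kappa,\mu$ are fixed small enough that $\max(4\kappa,\mu)<1-2/p$ (compatible with $\kappa<1/4$ and $\mu<1$), one concludes $\sup_n\|u_n\|_\varepsilon<\infty$.

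\textbf{Weak limit and strong convergence.} Up to a subsequence, $u_n \rightharpoonup u$ in $\mathcal{X}_\varepsilon$. Applying Lemma \ref{lemma:cylindrical-embeddings-compact} on an exhaustion of $\mathbb{R}^3\setminus\mathcal{H}$ by bounded $G$-invariant domains gives $u_n\to u$ a.e.\ and in $L^q_{\mathrm{loc}}(\mathbb{R}^3\setminus\mathcal{H})$ for every finite $q$. Passing to the limit in $\langle \mathcal{J}'_\varepsilon(u_n),\varphi\rangle\to 0$ against test functions $\varphi\in\mathcal{X}_\varepsilon$ --- the nonlinear term being handled by $L^p$-compactness where the integrand is $|u_n|^{p-2}u_n\bar\varphi$ (inside $\Lambda$) and by dominated convergence through the bound $g_\varepsilon(x,|u_n|^2)|u_n|\leq(\varepsilon^2 H+\mu V)|u_n|$ outside --- then shows $\mathcal{J}'_\varepsilon(u)=0$. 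For strong convergence, set $v_n:=u_n-u$, so $v_n\rightharpoonup 0$ and $\|u_n\|_\varepsilon^2 = \|v_n\|_\varepsilon^2 + \|u\|_\varepsilon^2 + o(1)$ by the Hilbert parallelogram identity. A Brezis--Lieb-type decomposition for the truncated nonlinearity, justified by the a.e.\ convergence together with the bounds $g_\varepsilon(x,s^2)s^2\leq s^p$ on $\Lambda$ and $g_\varepsilon(x,s^2)s^2\leq(\varepsilon^2 H+\mu V)s^2$ outside, combined with $\langle \mathcal{J}'_\varepsilon(u_n),u_n\rangle\to 0$ and $\langle \mathcal{J}'_\varepsilon(u),u\rangle = 0$, yields
$$\|v_n\|_\varepsilon^2 \;=\; \int_{\mathbb{R}^3}g_\varepsilon(x,|v_n|^2)|v_n|^2\,\mathrm{d}x + o(1).$$
The $\Lambda$-contribution tends to zero by $L^p$-compactness; on $\mathbb{R}^3\setminus\Lambda$ the integral is bounded by $\max(4\kappa,\mu)\|v_n\|_\varepsilon^2$, so $(1-\max(4\kappa,\mu))\|v_n\|_\varepsilon^2\leq o(1)$ forces $\|v_n\|_\varepsilon\to 0$.

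\textbf{Main obstacle.} The subtlest point is the uniform control of the nonlinear integral in the ``tail'' region $\mathbb{R}^3\setminus\Lambda$, where $V$ may vanish at infinity and $u_n$ converges only weakly. The simultaneous use of the penalization with the auxiliary potential $H$, the strict inequalities $\kappa<1/4$ and $\mu<1$ entering the magnetic Hardy inequality \eqref{eq:Hardy_inequality_magnetic}, and the cylindrical compactness from Lemma \ref{lemma:cylindrical-embeddings-compact} on $\Lambda$ is precisely what absorbs this tail into a strict fraction of $\|v_n\|_\varepsilon^2$ and allows the argument to close.
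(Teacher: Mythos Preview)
Your argument is essentially correct, but it follows a different route from the paper in the strong-convergence step, and a couple of points deserve comment.

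\smallskip

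\textbf{Boundedness.} Your computation of $\mathcal{J}_\varepsilon(u_n)-\tfrac{1}{p}\langle\mathcal{J}'_\varepsilon(u_n),u_n\rangle$ slips: on $\Lambda^c$ one has $\tfrac{1}{p}g_\varepsilon|u_n|^2-G_\varepsilon\ge -(\tfrac{1}{2}-\tfrac{1}{p})\,g_\varepsilon|u_n|^2$, so the correct coefficient in front of $\int_{\Lambda^c}(\varepsilon^2H+\mu V)|u_n|^2$ is $\tfrac{1}{2}-\tfrac{1}{p}$, not $\tfrac{1}{2}$. With this correction only $\max(4\kappa,\mu)<1$ is needed, exactly as the paper assumes; your constraint $\max(4\kappa,\mu)<1-2/p$ is artificially strong (though harmless, since $\kappa,\mu$ are free parameters).

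\smallskip

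\textbf{Strong convergence: comparison of approaches.} The paper does \emph{not} use a Brezis--Lieb splitting. It writes directly
\[
\|u_n-u\|_\varepsilon^2=\langle\mathcal{J}'_\varepsilon(u_n),u_n-u\rangle-\langle\mathcal{J}'_\varepsilon(u),u_n-u\rangle+\text{Re}\int\bigl[g_\varepsilon(x,|u_n|^2)u_n-g_\varepsilon(x,|u|^2)u\bigr]\overline{(u_n-u)},
\]
notes the first two terms are $o(1)$ by the Palais--Smale property and weak convergence, and then splits the last integral over three regions: $\Lambda$, an annulus $A_{\lambda_\delta}\setminus\Lambda$, and the tail $\mathbb{R}^3\setminus A_{\lambda_\delta}$. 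Compactness handles the first two pieces; two preliminary ``tail'' claims (uniform smallness of $\int H|u_n|^2$ and $\int V|u_n|^2$ outside $A_{\lambda_\delta}$, obtained via logarithmic cutoff functions and Hardy) handle the third. This avoids any Brezis--Lieb argument and does not require knowing that $u$ is a critical point.

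Your route is more streamlined: once $u$ is shown to be critical and the Brezis--Lieb identity $\int g_\varepsilon(x,|u_n|^2)|u_n|^2=\int g_\varepsilon(x,|u|^2)|u|^2+\int g_\varepsilon(x,|v_n|^2)|v_n|^2+o(1)$ is in hand, the absorption $\int_{\Lambda^c}g_\varepsilon(x,|v_n|^2)|v_n|^2\le\max(4\kappa,\mu)\|v_n\|_\varepsilon^2$ closes the argument in one line, bypassing the annular decomposition entirely. The price is that the Brezis--Lieb step for the \emph{truncated} nonlinearity $\Phi(x,s)=\min\{W(x),|s|^{p-2}\}|s|^2$ (with $W=\varepsilon^2H+\mu V$) on $\Lambda^c$ is not covered by the classical lemma for pure powers, and your justification (``a.e.\ convergence together with the bounds'') is too terse. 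It does hold: since $|\partial_s\Phi(x,s)|\le pW(x)|s|$, one gets $|\Phi(x,a+b)-\Phi(x,a)|\le \epsilon\, pW(x)|a|^2+C_\epsilon W(x)|b|^2$, and the generalized Brezis--Lieb lemma applies because $\sup_n\int W|v_n|^2<\infty$ and $\int W|u|^2<\infty$. You should state this estimate explicitly; otherwise a reader may suspect a hidden $L^p(\Lambda^c)$-requirement, which is \emph{not} available in general here.
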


\begin{proof} We proceed in several steps. 

\medbreak

\textbf{Step 1.} As usual, the first step of the proof consists in proving that the Palais-Smale sequence $(u_n)_n$ is bounded. By using successively the properties of the Palais-Smale sequence $(u_n)_n$, \eqref{eq:property_g1}, \eqref{eq:property_g2} and finally the magnetic Hardy inequality \eqref{eq:Hardy_inequality_magnetic}, we infer that 
\begin{align*}
\frac{1}{2} \|u_n\|_\varepsilon^2 & = \mathcal{J}_{\varepsilon}(u_n) + \int_{\mathbb{R}^3} G_{\varepsilon}(x,|u_n|^2) \\
& \leq C + \int_{\Lambda} G_{\varepsilon}(x,|u_n|^2) + \int_{\Lambda^{c}} G_{\varepsilon}(x,|u_n|^2) \\
& \leq C + \frac{1}{\theta} \int_{\mathbb{R}^3} g_{\varepsilon}(x, |u_n|^2) |u_n|^2 + \left( \frac{1}{2} - \frac{1}{\theta} \right) \int_{\Lambda^c} g_{\varepsilon}(x,|u_n|^2) |u_n|^2 \\
& \leq C + \frac{1}{\theta} \|u_n\|_\varepsilon^2 - \frac{1}{\theta} \langle \mathcal{J}_{\varepsilon}'(u_n), u_n \rangle + \left( \frac{1}{2} - \frac{1}{\theta} \right) \int_{\Lambda^c} \left( \varepsilon^2 H(x) + \mu V(x) \right) |u_n|^2 \\
& \leq C + \frac{1}{\theta} \|u_n\|_\varepsilon^2 + o(1) \|u_n\|_\varepsilon + \left(\frac{1}{2} - \frac{1}{\theta} \right) \mu \int_{\mathbb{R}^3} V(x) |u_n|^2 + \left( \frac{1}{2} - \frac{1}{\theta} \right) 4 \kappa \int_{\mathbb{R}^3} | (i \varepsilon \nabla + A) u_n|^2 \\
& \leq C + \frac{1}{\theta} \| u_n\|_\varepsilon^2 + o(1) \|u_n\|_\varepsilon + \left( \frac{1}{2} - \frac{1}{\theta} \right) \max\{\mu, 4 \kappa \} \|u_n\|_\varepsilon^2.
\end{align*}
Since $\theta > 2$ and $\max\{ \mu, 4 \kappa \} < 1$, the inequality 
\begin{align*}
\left(\frac{1}{2} - \frac{1}{\theta} \right) \left( 1 - \max\{ \mu, 4 \kappa \} \right) \|u_n\|_\varepsilon^2 \leq C + o(1) \|u_n\|_\varepsilon
\end{align*}
leads to the conclusion. 

\medbreak

From Step 1, we deduce the existence of a function $u \in \mathcal{X}_\varepsilon$ such that, up to a subsequence still denoted in the same way, $u_n$ weakly converges to $u$.

\medbreak

\textbf{Step 2.} In this step, we prove two useful claims aiming to deduce the strong convergence. We define the closed set $A_\lambda = \overline{B(0,e^\lambda)} \backslash B(0,e^{-\lambda})$, where $\lambda \geq 0$. 

\smallbreak

\textit{Claim 1 - for every $\delta > 0$, there exists $\lambda_\delta \geq 0$ such that
\begin{align} \label{eq:claim1}
\limsup_{n \to \infty} \varepsilon^2 \int_{\mathbb{R}^3 \backslash A_{\lambda_\delta}} H(x) |u_n|^2 < \delta.
\end{align}}
The inequality \eqref{eq:inequality_on_H} together with Hardy inequality \eqref{eq:Hardy_inequality_magnetic} yields
\begin{align*}
\int_{\mathbb{R}^3 \backslash A_{\lambda}} H(x) |u_n|^2 \leq \frac{4 \kappa}{\lambda^{1+\beta}} \int_{\mathbb{R}^3} | (i \varepsilon \nabla + A) u_n|^2.
\end{align*}
Since $(u_n)_n$ is bounded,  we now infer that for every $\delta >0$, there exists $\lambda_\delta \geq 0$ such that \eqref{eq:claim1} holds.

\medbreak

\textit{Claim 2 - for every $\delta > 0$, there exists $\lambda_\delta \geq 0$ (eventually bigger than the previous one) such that
\begin{align} \label{eq:claim2}
\limsup_{n \to \infty} \int_{\mathbb{R}^3 \backslash A_{\lambda_\delta}} V(x) |u_n|^2 < \delta. 
\end{align}}
We first define $\xi \in C^{\infty}(\mathbb{R})$ such that $0 \leq \xi \leq 1$ and 
\begin{align*}
\xi(s) = \left\{ 
\begin{array}{ll} 
0 \quad & \text{ if }|s| \leq \frac{1}{2} \\
1 \quad & \text{ if } |s| \geq 1
\end{array} \right.
\end{align*}
to build the cut-off function $\eta_{\lambda} \in C^{\infty}(\mathbb{R}^3,\mathbb{R})$ as
\begin{align*}
\eta_\lambda (x) = \xi\left( \frac{\log|x|}{\lambda} \right).
\end{align*}
Since $(u_n)_n$ is a bounded Palais-Smale sequence and $\eta_{\lambda} \leq 1$, we deduce that $\langle \mathcal{J}^{\prime}_{\varepsilon}(u_n), \eta_\lambda u_n \rangle = o(1)$. We then infer that
\begin{multline} \label{eq:eq1}
\int_{\mathbb{R}^3} \left( | (i \varepsilon \nabla + A)u_n|^2 + V(x) |u_n|^2 \right) \eta_\lambda \\ = \int_{\mathbb{R}^3} g_{\varepsilon}(x, |u_n|^2) |u_n|^2 \eta_\lambda + \text{Re} \int_{\mathbb{R}^3} i \varepsilon (i \varepsilon \nabla + A) u_n \cdot  \nabla \eta_\lambda \overline{u_n} + o(1).
\end{multline}
Since $\bar{\Lambda} \subset \mathbb{R}^3 \backslash \{0\}$, there exists $\lambda_0 \geq 0$ such that $\bar{\Lambda} \subset A_{\lambda_0}$. Then, if we take $\lambda \geq 2 \lambda_0$, we have $\eta_\lambda = 0$ on $\Lambda$. Now, using \eqref{eq:property_g2} and the above remark, we get
\begin{align} \label{eq:eq2}
\int_{\mathbb{R}^3} g_{\varepsilon}(x, |u_n|^2) |u_n|^2 \eta_\lambda & = \int_{\Lambda^c} g_{\varepsilon}(x, |u_n|^2) |u_n|^2 \eta_\lambda \\ \nonumber
& \leq \int_{\Lambda^c} \left( \varepsilon^2 H(x) + \mu V(x) \right) |u_n|^2 \eta_\lambda \\ \nonumber
& \leq \int_{\mathbb{R}^3} \left( \varepsilon^2 H(x) + \mu V(x) \right) |u_n|^2 \eta_\lambda.
\end{align}
Next, using the properties of $\eta_\lambda$ and the magnetic Hardy inequality \eqref{eq:Hardy_inequality_magnetic}, we deduce that
\begin{align} \label{eq:eq3}
\text{Re} \int_{\mathbb{R}^3} i \varepsilon (i \varepsilon \nabla + A) u_n \cdot \nabla \eta_\lambda \overline{u_n} & \leq \varepsilon \left| \int_{\mathbb{R}^3} (i \varepsilon \nabla + A) u_n) \cdot \nabla \eta_\lambda \overline{u_n} \right| \\ \nonumber
& \leq \frac{C \varepsilon}{\lambda} \left( \int_{\mathbb{R}^3} | (i \varepsilon \nabla + A) u_n|^2 \right)^{1/2} \left( \int_{\mathbb{R}^3} \frac{|u_n|^2}{|x|^2} \right)^{1/2} \\ \nonumber
& \leq \frac{4C}{\lambda} \int_{\mathbb{R}^3} | (i \varepsilon \nabla + A) u_n|^2.
\end{align}
Combining \eqref{eq:eq1}, \eqref{eq:eq2} and \eqref{eq:eq3}, we get the estimate
\begin{align*}
\int_{\mathbb{R}^3 \backslash A_{\lambda}} \left( |(i \varepsilon \nabla + A)u_n|^2 + (1 - \mu) V(x) |u_n|^2 \right) & \leq \int_{\mathbb{R}^3} \left( |(i \varepsilon \nabla + A)u_n|^2 + (1 - \mu) V(x) |u_n|^2 \right) \eta_\lambda  \\
& \leq \frac{4C}{\lambda} \| u_n\|_\varepsilon^2 + \varepsilon^2 \int_{\mathbb{R}^3} H(x) |u_n|^2 \eta_\lambda + o(1),
\end{align*}
for $\lambda \geq 2 \lambda_0$. Finally, thanks to \eqref{eq:claim1}, if we take $\lambda > 2 \lambda_\delta$, the second term in the right hand side is smaller than $\delta$. It follows that, for every $\delta > 0$, we can choose (a new) $\lambda_\delta \geq 2 \max\{ \lambda_0, \lambda_\delta\}$ such that \eqref{eq:claim2} holds.

\medbreak

\textit{Step 3.} We are now in a position to deduce the strong convergence. 
We compute
\begin{multline*}
\| u_n - u\|_\varepsilon^2 = \\ \langle J^{\prime}_\varepsilon (u_n) , u_n-u \rangle - \langle J^{\prime}_\varepsilon (u) , u_n-u \rangle + \text{Re} \int_{\mathbb{R}^3} \left[ g_\varepsilon (x,|u_n|^2) u_n - g_\varepsilon (x,|u|^2 ) u \right] (\overline{u_n -u}).
\end{multline*}
From Step 1, we know that $(u_n)_n$ is bounded so that in the right hand side, the first two terms converge to zero. For the last term in the right hand side, we divide the integral in three pieces. We treat separately the integrals on $\Lambda$, $A_{\lambda_\delta} \backslash \Lambda$ and $\mathbb{R}^3 \backslash A_{\lambda_\delta}$ and we next prove that they converge to zero. 

For the integral on $\Lambda$, we can use the fact that $u_n \in H^1_{A,\varepsilon}(\Lambda,\mathbb{C})$ because $\inf_{\Lambda}V > 0$ and $u_n \in \mathcal{X}_\varepsilon$. Then, we conclude by using the compact embedding of $H^1_{A,\varepsilon}(\Lambda,\mathbb{C})$ in $L^{q}(\Lambda,\mathbb{C})$ for $2 \leq q < + \infty$. Indeed, Lemma \ref{lemma:cylindrical-embeddings-compact} applies since $\Lambda$ is bounded away from the axis $x_3$. 

For the integral in $A_{\lambda_\delta} \backslash \Lambda$, we use the fact that $u_n\in H^1(A_{\lambda_\delta} \backslash \Lambda, \mathbb{C})$. Indeed, $A_{\lambda_\delta} \backslash \Lambda$ is bounded. In dimension $3$, this space is compactly embedded in $L^q(A_{\lambda_\delta} \backslash \Lambda,\mathbb{C})$ for $1 \leq q < 6$. Moreover, the penalization $g_\varepsilon$ is bounded on this bounded set. This and the strong convergence in $L^2$ allows to conclude.

The claims in Step 2 were intended to treat the remaining integral. Indeed, using \eqref{eq:claim1} and \eqref{eq:claim2}, we infer that
\begin{align*}
& \limsup_{n\to \infty} \left| \text{Re} \int_{\mathbb{R}^3 \backslash A_{\lambda_\delta}} \left[ g_\varepsilon(x,|u_n|^2) u_n - g_{\varepsilon}(x,|u|^2) u \right] ( \overline{u_n - u} ) \right| \\
& \leq 2 \limsup_{n \to \infty} \int_{\mathbb{R}^3 \backslash A_{\lambda_\delta}} \left( \varepsilon^2 H(x) + \mu V(x) \right) \left( |u_n|^2 + |u|^2 \right) \\
& \leq 4 C (1 + \mu) \delta.
\end{align*}
Then, since $\delta > 0$ is arbitrary, we are done.
\end{proof}

As a direct consequence, we deduce the existence of a least energy solution of the penalized problem \eqref{eq:penalized_problem}. 
\begin{theorem} \label{theorem:existence_solutions_penalized}
Let $g_\varepsilon : \mathbb{R} \times  \mathbb{R}^+ \rightarrow \mathbb{R}$ defined in \eqref{eq:penalized_nonlinearity} satisfy \eqref{eq:property_g1}, \eqref{eq:property_g2}, \eqref{eq:property_g3} and $V \in C(\mathbb{R}^3 \backslash \{0\})$ verify the hypothesis of Section 2.2. Then, for every $\varepsilon > 0$, the functional $\mathcal{J}_{\varepsilon}$ has a non trivial critical point $u_\varepsilon  \in \mathcal{X}_\varepsilon$, which is also a weak solution of \eqref{eq:penalized_problem}, characterized by
\begin{align} \label{eq:minimax_level}
c_\varepsilon = \mathcal{J}_\varepsilon(u_\varepsilon) = \inf_{ u \in \mathcal{X}_\varepsilon \backslash \{0\}} \max_{t >0} \mathcal{J}_{\varepsilon}(tu).
\end{align}
\end{theorem}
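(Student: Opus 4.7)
The plan is a standard mountain-pass argument on the $G$-invariant subspace $\mathcal{X}_\varepsilon$, combined with the Palais--Smale condition already secured by Lemma \ref{lemma:existence_solutions_penalized_problem} and Palais' principle of symmetric criticality. The machinery is classical for subquadratic-to-superquadratic penalizations, so the bulk of the work has already been done in Lemma \ref{lemma:existence_solutions_penalized_problem}; it remains to assemble the pieces and verify the minimax identity \eqref{eq:minimax_level}.

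First, I would verify the mountain-pass geometry of $\mathcal{J}_\varepsilon$ on $\mathcal{X}_\varepsilon$. To control $\int G_\varepsilon(x,|u|^2)$ near $0$, split it over $\Lambda$ and $\Lambda^c$: on $\Lambda$, use the continuous Sobolev embedding (combined with Lemma \ref{lemma:equivalence_spaces} since $A$ is locally bounded and $\inf_\Lambda V > 0$) to bound $\int_\Lambda |u|^p \lesssim \|u\|_\varepsilon^p$; on $\Lambda^c$, property \eqref{eq:property_g2} together with the magnetic Hardy inequality \eqref{eq:Hardy_inequality_magnetic} yields $\int_{\Lambda^c} G_\varepsilon(x,|u|^2) \leq \tfrac{1}{2}\max\{\mu,4\kappa\}\,\|u\|_\varepsilon^2$. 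Since $\max\{\mu,4\kappa\}<1$, one obtains $\mathcal{J}_\varepsilon(u) \geq c_1\|u\|_\varepsilon^2 - c_2\|u\|_\varepsilon^p$, which gives a local minimum at $0$. To reach the negative region, pick a $G$-invariant bump $v_0\in \mathcal{X}_\varepsilon\setminus\{0\}$ supported inside $\Lambda\cap\mathcal{H}^\perp$ (produced e.g.\ by averaging a bump over $G$, using that $\Lambda\cap\mathcal{H}^\perp\neq\emptyset$ and $\Lambda$ is $G$-invariant); property \eqref{eq:property_g1} gives $G_\varepsilon(x,|tv_0|^2)\geq c|t|^\theta |v_0|^\theta$ for $|t|$ large with $\theta>2$, so $\mathcal{J}_\varepsilon(tv_0)\to -\infty$ as $t\to +\infty$.

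Next, the mountain-pass theorem applied in the Hilbert space $\mathcal{X}_\varepsilon$ produces a Palais--Smale sequence at the minimax level
\begin{align*}
c_\varepsilon \;=\; \inf_{\gamma\in\Gamma_\varepsilon}\,\max_{t\in[0,1]}\mathcal{J}_\varepsilon(\gamma(t)),
\end{align*}
where $\Gamma_\varepsilon$ is the class of continuous paths in $\mathcal{X}_\varepsilon$ joining $0$ to a point where $\mathcal{J}_\varepsilon<0$. Lemma \ref{lemma:existence_solutions_penalized_problem} provides a strongly convergent subsequence, yielding a nontrivial critical point $u_\varepsilon\in\mathcal{X}_\varepsilon$ of $\mathcal{J}_\varepsilon|_{\mathcal{X}_\varepsilon}$ at level $c_\varepsilon$. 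Because $\mathcal{J}_\varepsilon$ is $G$-invariant and $\mathcal{X}_\varepsilon$ is the $G$-fixed subspace, Palais' principle of symmetric criticality \cite{Palais} promotes $u_\varepsilon$ to a critical point of $\mathcal{J}_\varepsilon$ on the full space, hence a weak solution of \eqref{eq:penalized_problem}.

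For the Nehari-type characterization \eqref{eq:minimax_level}, fix $u\in\mathcal{X}_\varepsilon\setminus\{0\}$ and study $\varphi_u(t):=\mathcal{J}_\varepsilon(tu)$ for $t>0$. The above estimates show $\varphi_u(t)>0$ for small $t$ and $\varphi_u(t)\to -\infty$ as $t\to+\infty$; the monotonicity property \eqref{eq:property_g3} (i.e., $s\mapsto g_\varepsilon(x,s^2)s/s$ is nondecreasing) forces $\varphi_u'(t)=0$ to have a unique positive root, which is the unique maximum. Truncating the half-ray $t\mapsto tu$ at a large $t_*$ with $\mathcal{J}_\varepsilon(t_* u)<0$ produces an admissible path in $\Gamma_\varepsilon$, giving $c_\varepsilon \leq \max_{t>0}\mathcal{J}_\varepsilon(tu)$ and hence $c_\varepsilon \leq \inf_{u\neq 0}\max_{t>0}\mathcal{J}_\varepsilon(tu)$. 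The reverse inequality is the standard fact that every path in $\Gamma_\varepsilon$ crosses the Nehari set $\mathcal{N}_\varepsilon=\{u\neq 0:\langle \mathcal{J}_\varepsilon'(u),u\rangle=0\}$ on which, by uniqueness of the maximum of $\varphi_u$, each $u$ satisfies $\mathcal{J}_\varepsilon(u)=\max_{t>0}\mathcal{J}_\varepsilon(tu)$.

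The main obstacle is compactness: Palais--Smale can fail for magnetic NLS with vanishing or singular $V$ because of escape to infinity and concentration at the singularity. This is precisely what Lemma \ref{lemma:existence_solutions_penalized_problem} overcomes, using the auxiliary Hardy weight $H$ and the cylindrical compact embedding of Lemma \ref{lemma:cylindrical-embeddings-compact}. Once this is granted, the remainder of the argument is routine.
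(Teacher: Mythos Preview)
Your proposal is correct and follows exactly the approach the paper intends: the paper states the theorem ``as a direct consequence'' of Lemma \ref{lemma:existence_solutions_penalized_problem} (Palais--Smale) together with the mountain-pass geometry already sketched before that lemma and Palais' principle of symmetric criticality, while you simply flesh out these standard steps and the Nehari identification. One minor imprecision: your claim that $\varphi_u(t)\to -\infty$ holds only for $u$ with $u|_\Lambda\not\equiv 0$; for $u$ supported in $\Lambda^c$ one has $\max_{t>0}\mathcal{J}_\varepsilon(tu)=+\infty$, but this is harmless for the infimum in \eqref{eq:minimax_level}.
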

This solution $u_\varepsilon $ belongs to $W^{2,q}_{\text{loc}}(\mathbb{R}^3 \backslash \{0\})$ for $2 \leq q < + \infty$ and therefore to $C_{\text{loc}}^{1,\alpha}(\mathbb{R}^3 \backslash \{0\})$. We cannot hope a better regularity since the penalization $g_{\varepsilon}$ is not even continuous.

\medbreak

In the next section, we estimate the critical value $c_\varepsilon$ from above. In the study of the asymptotics of the solutions $u_\varepsilon$, this upper estimate will be useful to determine that the concentration occurs exactly in $\Lambda$. 

\subsection{Upper estimate of the mountain pass level}

\begin{proposition}[Upper estimate of the critical value $c_\varepsilon$] \label{prop:upper_estimate}
Suppose that the assumptions of Theorem \ref{theorem:existence_solutions_penalized} are satisfied. For every $\varepsilon >0$ small enough, the critical value $c_\varepsilon$ defined in \eqref{eq:minimax_level} satisfies
\begin{align} \label{eq:upper_estimate_critical_value}
\liminf_{\varepsilon \to 0} \varepsilon^{-2} c_\varepsilon \leq \inf_{\Lambda \cap \mathcal{H}^\perp} \mathcal{M}.
\end{align} 
Moreover, there exists $C > 0$ such that the solution $u_\varepsilon$ found in Theorem \ref{theorem:existence_solutions_penalized} satisfies
\begin{align} \label{eq:upper_estimate_function}
\| u_\varepsilon \|_\varepsilon^2 \leq C \varepsilon^2.
\end{align}
\end{proposition}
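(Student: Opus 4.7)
The plan is to upper-bound $c_\varepsilon$ via the variational characterization \eqref{eq:minimax_level} using a cylindrically symmetric test function concentrated on a circle in $\Lambda \cap \mathcal{H}^\perp$. Fix any $\bar\rho > 0$ such that the circle $\{(\bar\rho\cos\theta,\bar\rho\sin\theta,0) : \theta \in [0,2\pi)\}$ lies in the interior of $\Lambda \cap \mathcal{H}^\perp$. I will construct $v_\varepsilon \in \mathcal{X}_\varepsilon$ satisfying $\max_{t>0}\mathcal{J}_\varepsilon(tv_\varepsilon) = \mathcal{M}(\bar\rho,0)\varepsilon^2 + o(\varepsilon^2)$; infimizing over such $\bar\rho$ then yields \eqref{eq:upper_estimate_critical_value}.

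Set $a_0 := c^2(\bar\rho,0)+V(\bar\rho,0) > 0$ (positive by \eqref{eq:condition_sur_Lambda2}) and let $w$ be the unique positive radial ground state of $-\Delta w + a_0 w = w^{p-1}$ in $\mathbb{R}^2$. Pick a smooth $G$-invariant cutoff $\eta$, even in $x_3$, supported in $\Lambda$, and identically $1$ on a $G$-invariant neighborhood of the circle. Define
\[
v_\varepsilon(\rho,x_3) := \eta(\rho,x_3)\, \exp\!\Bigl(i\phi(\bar\rho,0)\tfrac{\rho-\bar\rho}{\varepsilon}\Bigr)\, w\!\Bigl(\tfrac{\rho-\bar\rho}{\varepsilon},\tfrac{x_3}{\varepsilon}\Bigr).
\]
The phase is chosen, in the spirit of Lemma \ref{lemma:least-energy-solution-limit-problem}, to gauge away the normal component $\phi(\bar\rho,0)\textbf{e}_n$ of $A$ at the concentration locus. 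Radiality of $w$ and evenness of $\eta$ in $x_3$ ensure $v_\varepsilon \in \mathcal{X}_\varepsilon$; since $v_\varepsilon$ is supported in $\Lambda$, the penalization is inactive and $G_\varepsilon(x,|v_\varepsilon|^2) = |v_\varepsilon|^p/p$.

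For cylindrically symmetric functions, the decomposition of $A$ in $(\textbf{e}_n, \textbf{e}_\tau, \textbf{e}_3)$ gives
\[
|(i\varepsilon\nabla + A)v_\varepsilon|^2 = |i\varepsilon\partial_\rho v_\varepsilon + \phi v_\varepsilon|^2 + c^2|v_\varepsilon|^2 + |i\varepsilon\partial_{x_3}v_\varepsilon + A_3 v_\varepsilon|^2,
\]
so the tangential component $c$ enters only as a mass term. Rescaling $\rho = \bar\rho + \varepsilon y_1$, $x_3 = \varepsilon y_2$, Taylor-expanding $\phi, c, A_3, V$ at $(\bar\rho,0)$ (with $A_3(\bar\rho,0)=0$ by oddness in $x_3$), and integrating against the Jacobian $2\pi(\bar\rho+\varepsilon y_1)\varepsilon^2\,\mathrm{d}y_1\mathrm{d}y_2$, the chosen phase cancels the $\phi$-contribution and leaves
\[
\|v_\varepsilon\|_\varepsilon^2 = 2\pi\bar\rho\varepsilon^2 \!\int_{\mathbb{R}^2}\!\bigl(|\nabla w|^2 + a_0 w^2\bigr) + o(\varepsilon^2), \qquad \int_{\mathbb{R}^3}|v_\varepsilon|^p = 2\pi\bar\rho\varepsilon^2 \!\int_{\mathbb{R}^2}\! w^p + o(\varepsilon^2).
\]
Maximizing the polynomial $t \mapsto \tfrac{t^2}{2}\|v_\varepsilon\|_\varepsilon^2 - \tfrac{t^p}{p}\!\int|v_\varepsilon|^p$ in $t>0$ and invoking the Nehari identity $\int(|\nabla w|^2 + a_0 w^2) = \int w^p$ together with \eqref{eq:croissance_ground_energy} produces $\max_{t>0}\mathcal{J}_\varepsilon(tv_\varepsilon) = 2\pi\bar\rho\,\mathcal{E}(0,a_0)\varepsilon^2 + o(\varepsilon^2) = \mathcal{M}(\bar\rho,0)\varepsilon^2 + o(\varepsilon^2)$, proving \eqref{eq:upper_estimate_critical_value}.

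For \eqref{eq:upper_estimate_function}, combine $\mathcal{J}_\varepsilon(u_\varepsilon)=c_\varepsilon$ with $\langle\mathcal{J}_\varepsilon'(u_\varepsilon),u_\varepsilon\rangle=0$ to write
\[
c_\varepsilon = \Bigl(\tfrac12 - \tfrac1\theta\Bigr)\|u_\varepsilon\|_\varepsilon^2 + \int_{\mathbb{R}^3}\!\Bigl(\tfrac{1}{\theta}g_\varepsilon(x,|u_\varepsilon|^2)|u_\varepsilon|^2 - G_\varepsilon(x,|u_\varepsilon|^2)\Bigr).
\]
The integrand is nonnegative on $\Lambda$ (since $\theta \leq p$ in \eqref{eq:property_g1}), while on $\Lambda^c$ the bounds $G_\varepsilon \leq \tfrac12 g_\varepsilon |u_\varepsilon|^2 \leq \tfrac12(\varepsilon^2 H + \mu V)|u_\varepsilon|^2$ together with the magnetic Hardy inequality \eqref{eq:Hardy_inequality_magnetic} absorb the negative part into $\|u_\varepsilon\|_\varepsilon^2$, exactly as in Step 1 of Lemma \ref{lemma:existence_solutions_penalized_problem}, giving $(\tfrac12 - \tfrac1\theta)(1 - \max\{4\kappa, \mu\})\|u_\varepsilon\|_\varepsilon^2 \leq c_\varepsilon$; combined with $c_\varepsilon \leq C\varepsilon^2$ from the first part, this proves \eqref{eq:upper_estimate_function}. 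The delicate step is the rescaling in the third paragraph: one must verify that the phase exactly cancels the normal contribution of $A$ while the tangential component remains in the effective mass $a_0$, and that the curvature of the circle (through the Jacobian $\bar\rho + \varepsilon y_1$) and the Taylor remainders of $\phi, c, A_3, V$ contribute only at order $o(\varepsilon^2)$.
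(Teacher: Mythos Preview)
Your proof is correct and follows essentially the same route as the paper: both build a cylindrically symmetric test function supported in $\Lambda$ out of the two-dimensional ground state rescaled around a circle in $\Lambda\cap\mathcal{H}^\perp$, compute $\mathcal{J}_\varepsilon(t\,\cdot\,)$ by the change of variables $(\rho,x_3)=(\bar\rho+\varepsilon y_1,\varepsilon y_2)$, and use the minimax characterization \eqref{eq:minimax_level}; the norm bound is then obtained exactly as in Step~1 of Lemma~\ref{lemma:existence_solutions_penalized_problem}. The only cosmetic differences are that the paper uses the least-energy solution $v$ of the magnetic limit problem $\mathcal{J}^{A_0}_{a_0}$ directly (your explicit phase $e^{i\phi(\bar\rho,0)(\rho-\bar\rho)/\varepsilon}$ is precisely the gauge of Lemma~\ref{lemma:least-energy-solution-limit-problem} that turns $v$ into the real ground state $w$), and the paper fixes the minimizing $\rho_0$ at the outset whereas you infimize over $\bar\rho$ at the end.
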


\begin{proof}
Let  $x_0 = (\rho_0 \cos \theta , \rho_0 \sin \theta, 0) \in \Lambda \cap \mathcal{H}^\perp$, with $\rho_0 >0$ and $\theta \in [0,2\pi)$, be such that $\mathcal{M}(x_0) = \inf_{\Lambda \cap \mathcal{H}^\perp} \mathcal{M}$. The existence of $x_0$ is ensured by the continuity of  $\mathcal{M}$ on $\Lambda$ and \eqref{eq:condition_sur_Lambda}. Consider the functional $\mathcal{J}_{a_0}^{A_0}$ defined by \eqref{eq:limit_functional}, with $a_0 = \left[ c(x_0)^2 + V(x_0)\right]$ and $A_0 = (\phi(x_0), 0)$. Next, we define the cut-off function $\eta \in C^\infty_0(\mathbb{R}^+ \times \mathbb{R})$ such that $0 \leq \eta \leq 1$, $\eta = 1$ in a small neighbourhood of $(\rho_0,0)$, and is compactly supported in a small neighbourhood of $(\rho_0,0)$, and $\| \nabla \eta \|_{L^\infty} \leq C$. We define the cylindrically symmetric function 
\begin{align*}
u(x_1, x_2, x_3) = u(\rho, x_3) = \eta(\rho, x_3) v\left(\frac{\rho - \rho_0}{\varepsilon}, \frac{x_3}{\varepsilon}\right),
\end{align*}
where $v$ is the least-energy solution of $\mathcal{J}_{a_0}^{A_0}$. If we perform the change of variables
\begin{align*}
 y_1 = \frac{\rho - \rho_0}{\varepsilon} \quad \text{ and } \quad  y_2 = \frac{x_3}{\varepsilon}
\end{align*}
in the computation of $\mathcal{J}_{\varepsilon}(tu)$, we get
\begin{align*}
\mathcal{J}_{\varepsilon}(tu)& = \frac{t^2}{2} \int_{\mathbb{R}^3} \left[ | (i \varepsilon \nabla + A) u|^2 + V(x) |u|^2 \right] \, \mathrm{d}x - \int_{\mathbb{R}^3} G_{\varepsilon}(x, t^2 |u|^2) \, \mathrm{d}x \\
& = 2 \pi  \varepsilon^2 \frac{t^2}{2} \int_{-\frac{\rho_0}{\varepsilon}}^\infty \int_{\mathbb{R}} \{ \eta^2(\rho_0 + \varepsilon y_1, \varepsilon y_2) \left[ | (i \nabla + \left(\phi(\rho_0 + \varepsilon y_1, \varepsilon y_2), A_3( \rho_0 + \varepsilon y_1, \varepsilon y_2) \right) v|^2 \right] \\
& + \eta^2(\rho_0 + \varepsilon y_1, \varepsilon y_2) \left[ c(\rho_0 + \varepsilon y_1, \varepsilon y_2)^2 + V(\rho_0 + \varepsilon y_1 , \varepsilon y_2) \right] |v|^2 \} \, (\rho_0 + \varepsilon y_1) \, \mathrm{d}y_1 \mathrm{d}y_2 \\
& - 2\pi \varepsilon^2 \int_{-\frac{\rho_0}{\varepsilon}}^\infty \int_{\mathbb{R}} G_\varepsilon(\rho_0 + \varepsilon y_1, \varepsilon y_2, t^2 |v|^2 \eta^2) \, (\rho_0 + \varepsilon y_1) \, \mathrm{d} y_1 \mathrm{d}y_2 + o(\varepsilon^2).
\end{align*}
The term $o(\varepsilon^2)$ includes the terms where the derivatives were applied to $\eta$ instead of $v$. This term is controlled thanks the the compactness of the support of $\eta$ and the control $\| \nabla \eta \|_{L^\infty} \leq C$. Finally, as $\eta$ is compactly supported around $(\rho_0,0)$, $G_\varepsilon(x,t^2|v|^2\eta^2)$ will coincide with $F(t^2|v|^2\eta^2)$ for $\varepsilon$ small enough. We then deduce that 
\begin{align*}
\liminf_{\varepsilon \to 0} \varepsilon^{-2} \mathcal{J}_{\varepsilon}(tu) \leq 2 \pi \rho_0 \, \mathcal{J}_{a_0}^{A_0}(tv) .
\end{align*}
Now we exploit the fact that $c_\varepsilon$ is the least-energy level for $\mathcal{J}_\varepsilon$ and $v$ is the least-energy function for $\mathcal{J}_{a_0}^{A_0}$, as well as Lemma \ref{lemma:least-energy-solution-limit-problem} where $w$ is the least energy solution to $\mathcal{J}_{a_0}^{0}$, to obtain
\begin{align*}
\liminf_{\varepsilon \to 0} \varepsilon^{-2} c_\varepsilon & \leq  \liminf_{\varepsilon \to 0} \varepsilon^{-2} \max_{t > 0} \mathcal{J}_\varepsilon(tu) \leq 2 \pi \rho_0 \, \max_{t>0} \mathcal{J}_{a_0}^{A_0}(tv)  \\
                               & = 2 \pi \rho_0 \, \mathcal{J}_{a_0}^{A_0}(v) = 2 \pi \rho_0 \, \mathcal{J}_{a_0}^{0}(w).                            
\end{align*}
The last equality follows from \eqref{eq:croissance_ground_energy}.

\medbreak

To deduce the second statement of the proposition, we argue as in Step 1 of the proof of Lemma \ref{lemma:existence_solutions_penalized_problem}, with the extra properties that $\mathcal{J}^\prime_{\varepsilon}(u_\varepsilon) = 0$, because we have the additional information that $u_\varepsilon$ is a critical point, and $\mathcal{J}_\varepsilon(u_\varepsilon) = c_\varepsilon \leq C \varepsilon^2$. We then infer that
\begin{align*}
\left( \frac{1}{2} - \frac{1}{\theta} \right) \left( 1 - \max\{4 \kappa, \mu \} \right) \| u_\varepsilon \|_\varepsilon^2 \leq C \varepsilon^2.
\end{align*}
\end{proof}

\section{Asymptotic estimates} \label{section:asymptotic-analysis}

In this section, we study the behaviour of solutions when $\varepsilon \to 0$. With those estimates at hand, we will be able to prove that the solutions of the penalized problem solve the original equation for $\varepsilon$ small enough. 

\subsection{No uniform convergence to $0$ on $\Lambda$}

We start by proving that the solution $u_\varepsilon$ does not converge uniformly to $0$ in $\Lambda$ as $\varepsilon\to 0$. 
\begin{proposition} \label{prop:no_uniform_convergence_in_lambda}
Suppose the assumptions of Theorem \ref{theorem:existence_solutions_penalized} are satisfied and let $(u_\varepsilon)_\varepsilon \subset \mathcal{X}_\varepsilon$ be the solutions found in Theorem \ref{theorem:existence_solutions_penalized}. Then, 
\begin{align*}
\liminf_{\varepsilon\to 0}\|u_\varepsilon\|_{L^\infty(\Lambda)} > 0.
\end{align*} 
\end{proposition}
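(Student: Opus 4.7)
The plan is to argue by contradiction. Suppose $\|u_\varepsilon\|_{L^\infty(\Lambda)} \to 0$ along some sequence $\varepsilon \to 0$. Since the nontrivial critical points $u_\varepsilon$ obtained in Theorem \ref{theorem:existence_solutions_penalized} satisfy $\langle \mathcal{J}'_\varepsilon(u_\varepsilon), u_\varepsilon\rangle = 0$, testing the penalized equation \eqref{eq:penalized_problem} against $u_\varepsilon$ yields
\begin{equation*}
\|u_\varepsilon\|_\varepsilon^2 = \int_{\mathbb{R}^3} |(i\varepsilon\nabla + A)u_\varepsilon|^2 + V(x)|u_\varepsilon|^2 = \int_{\mathbb{R}^3} g_\varepsilon(x, |u_\varepsilon|^2)|u_\varepsilon|^2.
\end{equation*}
The goal is to show that under the contradiction hypothesis, the right-hand side is strictly smaller than the left-hand side, which forces $u_\varepsilon \equiv 0$ and contradicts the nontriviality of the mountain-pass critical point.

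The key observation is that the uniform smallness on $\Lambda$ combined with assumption \eqref{eq:condition_sur_Lambda2} lets us control the nonlinearity by the penalization \emph{even inside} $\Lambda$. Indeed, $\inf_{\bar\Lambda} V > 0$, so for $\varepsilon$ sufficiently small we have
\begin{equation*}
f(|u_\varepsilon(x)|^2) = |u_\varepsilon(x)|^{p-2} \leq \mu \inf_{\bar\Lambda} V \leq \mu V(x) \qquad \text{for all } x \in \Lambda.
\end{equation*}
Combining this with the definition \eqref{eq:penalized_nonlinearity} of $g_\varepsilon$ on $\mathbb{R}^3 \setminus \Lambda$, we obtain the global pointwise bound
\begin{equation*}
g_\varepsilon(x, |u_\varepsilon|^2) \leq \varepsilon^2 H(x) + \mu V(x) \qquad \text{for all } x \in \mathbb{R}^3.
\end{equation*}

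Plugging this into the identity above and invoking the bound $H(x) \leq \kappa/|x|^2$ from \eqref{eq:inequality_on_H} together with the magnetic Hardy inequality \eqref{eq:Hardy_inequality_magnetic} in dimension $N=3$ (which gives $\varepsilon^2 \int |u|^2/|x|^2 \leq 4 \int|(i\varepsilon\nabla+A)u|^2$), we estimate
\begin{equation*}
\|u_\varepsilon\|_\varepsilon^2 \leq \varepsilon^2 \int_{\mathbb{R}^3} H(x)|u_\varepsilon|^2 + \mu \int_{\mathbb{R}^3} V(x)|u_\varepsilon|^2 \leq 4\kappa \int_{\mathbb{R}^3} |(i\varepsilon\nabla+A)u_\varepsilon|^2 + \mu \int_{\mathbb{R}^3} V(x)|u_\varepsilon|^2.
\end{equation*}
With $4\kappa < 1$ and $\mu < 1$ by construction, this gives $\|u_\varepsilon\|_\varepsilon^2 \leq \max\{4\kappa,\mu\}\|u_\varepsilon\|_\varepsilon^2$ with strict coefficient less than $1$, hence $u_\varepsilon \equiv 0$, the desired contradiction.

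The argument is short and the only step requiring care is the passage from $L^\infty$-smallness on $\Lambda$ to the pointwise comparison $f(|u_\varepsilon|^2) \leq \mu V$ on $\Lambda$; this is precisely where assumption \eqref{eq:condition_sur_Lambda2} plays its role, and it is the reason the penalization scheme was set up with the auxiliary Hardy-type potential $H$, so that both parts of the resulting bound can be absorbed into $\|u_\varepsilon\|_\varepsilon^2$ via Hardy's inequality.
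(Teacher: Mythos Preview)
Your proof is correct, and the key observation---that under the contradiction hypothesis the nonlinearity satisfies $g_\varepsilon(x,|u_\varepsilon|^2) \leq \varepsilon^2 H(x) + \mu V(x)$ globally---is exactly the same as in the paper. The difference lies in how the contradiction is extracted from this bound.

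The paper works pointwise: it combines the penalized equation with Kato's inequality \eqref{eq:magnetic_kato_inequality} to obtain the differential inequality
\[
-\varepsilon_n^2(\Delta + H)|u_{\varepsilon_n}| + (1-\mu)V|u_{\varepsilon_n}| \leq 0 \quad \text{in } \mathbb{R}^3,
\]
and then invokes the comparison principle of Lemma~\ref{lemma:comparison} for the operator $-\Delta - H$ to deduce $|u_{\varepsilon_n}| \equiv 0$. You instead work integrally: you test the equation against $u_\varepsilon$, obtain the Nehari identity $\|u_\varepsilon\|_\varepsilon^2 = \int g_\varepsilon(x,|u_\varepsilon|^2)|u_\varepsilon|^2$, and absorb the right-hand side back into the left via the magnetic Hardy inequality \eqref{eq:Hardy_inequality_magnetic}. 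This yields $\|u_\varepsilon\|_\varepsilon^2 \leq \max\{4\kappa,\mu\}\|u_\varepsilon\|_\varepsilon^2$, hence $u_\varepsilon \equiv 0$.

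Your route is slightly more elementary in that it avoids Kato's inequality and the comparison principle, using only integration and Hardy. The paper's route, on the other hand, is the natural precursor to the later barrier arguments in Section~\ref{section:solution-initial-problem}, where the same differential inequality (localized to $\mathbb{R}^3 \setminus B_{cyl}(x_\varepsilon,\varepsilon r)$) is again compared against supersolutions via Lemma~\ref{lemma:comparison}. So the paper's choice is partly a matter of coherence with what follows.
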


\begin{proof}
By contradiction, assume that there exists a sequence $(\varepsilon_n)_n \subset \mathbb{R}^+$ such that $\varepsilon_n \to 0$ and $\|u_{\varepsilon_n} \|_{L^\infty(\Lambda)} \rightarrow 0$ as $n \to + \infty$. Using Kato inequality \eqref{eq:magnetic_kato_inequality} and the equation  \eqref{eq:penalized_problem}, we obtain
\begin{align*}
- {\varepsilon_n}^2 \left( \Delta + H \right) |u_{\varepsilon_n}| + (1 - \mu) V |u_{\varepsilon_n}| \leq - \varepsilon_n^2 H |u_{\varepsilon_n}| - \mu V |u_{\varepsilon_n}| + g_{\varepsilon_n}(x, |u_{\varepsilon_n}|^2) |u_{\varepsilon_n}|.
\end{align*}
By \eqref{eq:property_g2}, we infer that the right hand side of the last inequality is non positive in $\Lambda^c$. On the other hand, since we assume that $\|u_{\varepsilon_n}\|_{L^\infty(\Lambda)} \to 0$, the facts that $p > 2$ and $V(x) > 0$ in $\Lambda$ implies $|u_{\varepsilon_n}|^{p-1} \leq \mu V(x) |u_{\varepsilon_n}|$ in $\Lambda$ for $n$ large. We thus conclude that
\begin{align*}
- {\varepsilon_n}^2 \left( \Delta + H(x) | \right)|u_{\varepsilon_n}| + (1 - \mu) V(x) |u_{\varepsilon_n}| \leq 0, \quad \text{ in } \mathbb{R}^3.
\end{align*}
We then reach a contradiction because the comparison principle (Lemma \ref{lemma:comparison}) implies that $|u_{\varepsilon_n}| = 0$ for large $n$. 
\end{proof}

\subsection{Estimates on the rescaled solutions}

As we have seen in Proposition \ref{prop:upper_estimate}, the norm of  the solution $u_\varepsilon$ is of the order $\varepsilon$. It is then natural to rescale $u_\varepsilon$ around some family of points $(\rho_\varepsilon, x_{3,\varepsilon})$ as
\begin{align}\label{eq:rescaled}
v_\varepsilon(y) = u_\varepsilon(x_\varepsilon) =  u_{\varepsilon}(\rho_\varepsilon + \varepsilon y_1, x_{3,\varepsilon} + \varepsilon y_2 ),
\end{align}
where $(x_\varepsilon)_\varepsilon = (\rho_\varepsilon \cos \theta, \rho_\varepsilon \sin \theta, x_{3,\varepsilon})_\varepsilon \subset \bar{\Lambda}$, $\theta \in [0, 2\pi[$. The rescaled solution is defined for  $y=(y_1, y_2) \in \left( -\rho_\varepsilon/\varepsilon, + \infty \right) \times \mathbb{R}$.
The following lemma shows the convergence of those rescaled sequences of solutions.

\begin{lemma}[Convergence of the rescaled solutions] \label{lemma:convergence_rescaled_solutions}
Suppose the assumptions of Theorem \ref{theorem:existence_solutions_penalized} are satisfied. Let $(\varepsilon_n)_n \subset \mathbb{R}^+$ and $(x_n)_n = (\rho_n \cos \theta, \rho_n \sin \theta, x_{3,n})_n \subset \bar{\Lambda}$,  $\theta \in [0, 2 \pi)$ be such that $\varepsilon_n \to 0$ and $x_n \to \bar{x} = (\bar{\rho} \cos \theta, \bar{\rho} \sin \theta, \bar{x}_3) \in \overline{\Lambda} $,  as $n \to + \infty$. Set 
\begin{align*}
& \bar{A} = \left( \phi(\bar{\rho},\bar{x}_3), A_3(\bar{\rho},\bar{x}_3) \right), \qquad  \bar{a} = c^2(\bar{\rho},\bar{x}_3) + V(\bar{\rho},\bar{x}_3).
\end{align*}
Consider the sequence of solutions $(u_{\varepsilon_n})_n \subset \mathcal{X}_{\varepsilon_n}$ found in Theorem \ref{theorem:existence_solutions_penalized}. 
There exists $v \in H^1(\mathbb{R}^2, \mathbb{C})$ such that, up to a subsequence,
\begin{align*}
v_{\varepsilon_n} \rightarrow v \quad \text{ in } \quad C^{1,\alpha}_{\text{loc}}(\mathbb{R}^2, \mathbb{C}) \quad \text{ for } \alpha \in (0,1),
\end{align*}
where $(v_{\varepsilon_n})_n$ is the sequence defined by \eqref{eq:rescaled}, $v$ solves the equation
\begin{align} \label{eq:equation_v}
(i \nabla + \bar{A})^2 v + \bar{a} v = \bar{g}(y, |v|^2)v \quad \text{ in } \mathbb{R}^2,
\end{align}
with
\begin{align}
& \bar{g}(y,|v|^2) = \chi(y) f(|v|^2) + (1 - \chi(y)) \min\{ \mu V(\bar{\rho},\bar{x}_3), f(|v|^2) \},
\end{align}
$\chi$ being the limit a.e. of $\chi_n(y) = \chi_{\Lambda}(\rho_n + \varepsilon_n y_1, x_{3,n} + \varepsilon_n y_2)$. Moreover, we have
\begin{multline} \label{eq:eq1_v}
2 \pi \bar{\rho} \int_{\mathbb{R}^2}  \left( | (i \nabla + \bar{A}) v|^2 + \bar{a} |v|^2 \right) \, \mathrm{d}y  = \\
 \lim_{R \to + \infty} \liminf_{n \to + \infty} \varepsilon_n^{-2} \int_{B_{cyl}(x_n, \varepsilon_n R)} \left( | ( i \varepsilon_n \nabla + A) u_{\varepsilon_n} |^2 + V(x) |u_{\varepsilon_n}|^2 \right) \, \mathrm{d}x .
\end{multline}
\end{lemma}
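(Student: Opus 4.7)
The strategy is to rescale \eqref{eq:penalized_problem} around $x_n$, extract uniform local bounds from Proposition~\ref{prop:upper_estimate}, upgrade weak convergence to $C^{1,\alpha}_{\text{loc}}$ via elliptic regularity, identify the limit equation, and finally derive \eqref{eq:eq1_v} by a change of variables in cylindrical coordinates.

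\textbf{Rescaled equation and uniform bounds.} Since $u_{\varepsilon_n}$ is cylindrically symmetric, $\nabla u_{\varepsilon_n}$ has no component along $\mathbf{e}_\tau$, so the tangential coefficient $c$ enters \eqref{eq:penalized_problem} only through $|A|^2 = \phi^2 + c^2 + A_3^2$. Writing the equation in the $(\rho,x_3)$-variables and changing variables $y_1 = (\rho-\rho_n)/\varepsilon_n$, $y_2 = (x_3 - x_{3,n})/\varepsilon_n$, with $\bar A_n(y) := (\phi, A_3)(\rho_n+\varepsilon_n y_1, x_{3,n}+\varepsilon_n y_2)$ and $\bar a_n(y) := (c^2+V)(\rho_n+\varepsilon_n y_1, x_{3,n}+\varepsilon_n y_2)$, one checks that
\[
(i \nabla_y + \bar A_n)^2 v_{\varepsilon_n} + \bar a_n v_{\varepsilon_n} - \tfrac{\varepsilon_n}{\rho_n + \varepsilon_n y_1}\partial_{y_1} v_{\varepsilon_n} + i\varepsilon_n \sigma_n v_{\varepsilon_n} = g_{\varepsilon_n}(\cdot,|v_{\varepsilon_n}|^2) v_{\varepsilon_n},
\]
where $\sigma_n$ is a function bounded on compacta arising from $\operatorname{div} A$. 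Because $\bar\rho > 0$ (recall $\overline{\Lambda} \cap \mathcal{H} = \emptyset$), on every $B(0,R)$ the last two correction terms are $O(\varepsilon_n)$ and $\bar A_n$, $\bar a_n$ converge uniformly to $\bar A$, $\bar a$. Transporting the bound $\|u_{\varepsilon_n}\|_{\varepsilon_n}^2 \leq C\varepsilon_n^2$ of Proposition~\ref{prop:upper_estimate} through the change of variables (which introduces the Jacobian $2\pi(\rho_n+\varepsilon_n y_1)\varepsilon_n^2$, bounded away from $0$) yields uniform $H^1(B(0,R),\mathbb{C})$ bounds for $v_{\varepsilon_n}$ via the diamagnetic inequality \eqref{eq:diamagnetic_epsilon} and Lemma~\ref{lemma:equivalence_spaces}.

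\textbf{$C^{1,\alpha}_{\text{loc}}$ convergence, limit equation, and integral identity.} The $\min$ in \eqref{eq:penalized_nonlinearity} together with the local $H^1$-bound make $g_{\varepsilon_n}(\cdot, |v_{\varepsilon_n}|^2) v_{\varepsilon_n}$ locally bounded in $L^q$ for every $q<\infty$ (for the $\Lambda$-part one uses Sobolev embedding in $\mathbb{R}^2$). Calder\'on-Zygmund $W^{2,q}$ estimates applied to the rescaled equation followed by Morrey embedding provide uniform $C^{1,\alpha}(B(0,R))$ bounds, and a diagonal extraction yields $v_{\varepsilon_n} \to v$ in $C^{1,\alpha}_{\text{loc}}(\mathbb{R}^2,\mathbb{C})$. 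Passing to a further subsequence, $\chi_n(y) := \chi_\Lambda(\rho_n+\varepsilon_n y_1, x_{3,n}+\varepsilon_n y_2)$ converges a.e.\ to some $\chi$, and the $\varepsilon_n^2 H$-contribution in $g_{\varepsilon_n}$ vanishes uniformly on compacta, so the limit function $v$ solves \eqref{eq:equation_v} weakly. For the identity \eqref{eq:eq1_v}, the same change of variables together with
\[
|(i\varepsilon_n\nabla + A)u_{\varepsilon_n}|^2 = |i\partial_{y_1} v_{\varepsilon_n} + \phi v_{\varepsilon_n}|^2 + c^2|v_{\varepsilon_n}|^2 + |i\partial_{y_2} v_{\varepsilon_n} + A_3 v_{\varepsilon_n}|^2
\]
gives
\[
\varepsilon_n^{-2}\!\!\int_{B_{cyl}(x_n,\varepsilon_n R)}\!\!\bigl(|(i\varepsilon_n\nabla + A)u_{\varepsilon_n}|^2 + V|u_{\varepsilon_n}|^2\bigr)\dx = 2\pi\!\int_{B(0,R)}\!\bigl(|(i\nabla_y + \bar A_n) v_{\varepsilon_n}|^2 + \bar a_n |v_{\varepsilon_n}|^2\bigr)(\rho_n+\varepsilon_n y_1)\,dy.
\]
For fixed $R$, the $C^{1,\alpha}$-convergence and uniform convergence of the coefficients let us pass to the limit to obtain $2\pi\bar\rho \int_{B(0,R)}(|(i\nabla+\bar A)v|^2 + \bar a|v|^2)\,dy$; monotonicity in $R$ on both sides then yields \eqref{eq:eq1_v}, with finiteness of the $R\to\infty$ limit following from the uniform bounds.

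\textbf{Main obstacle.} The most delicate part is passing to the limit in the non-smooth penalization: because $\chi_\Lambda$ is discontinuous and $\bar x$ may lie on $\partial\Lambda$, the limit of $\chi_n$ depends on the direction from which $y$ approaches the tangent hyperplane to $\partial\Lambda$ at $\bar x$. The smoothness of $\partial\Lambda$ ensures that this directional ambiguity affects only a measure-zero set, and the pointwise $C^{1,\alpha}$-convergence of $v_{\varepsilon_n}$ is what allows the limit to commute with the $\min$ inside \eqref{eq:penalized_nonlinearity}.
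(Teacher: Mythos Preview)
Your proposal is correct and follows essentially the same route as the paper: rescale the equation in the $(\rho,x_3)$-variables, transfer the global energy bound of Proposition~\ref{prop:upper_estimate} through the Jacobian $2\pi(\rho_n+\varepsilon_n y_1)\varepsilon_n^2$ (bounded below on balls since $\bar\rho>0$) to obtain local $H^1$ bounds, bootstrap via $W^{2,q}$ estimates to $C^{1,\alpha}_{\text{loc}}$, and then pass to the limit in both the equation and the integral identity. The paper makes the $H^1(\mathbb{R}^2)$ membership of the limit slightly more explicit by introducing a cutoff $\eta_{R_n}$ with $R_n\to\infty$, $\varepsilon_n R_n\to 0$, but this is implicit in your ``finiteness of the $R\to\infty$ limit'' remark and amounts to the same estimate.
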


\begin{proof} We proceed again in several steps. 

\medbreak

\textbf{Step 1: Convergence of the sequence $(v_{\varepsilon_n})_{n}$.} First, the equation solved by $v_{\varepsilon_n}$ is the following
\begin{align} \label{eq:equation_v_n}
(i \nabla + A_n)^2 v_{\varepsilon_n} - \frac{\varepsilon_n}{\rho_n + \varepsilon_n y_1} \frac{\partial v_{\varepsilon_n}}{\partial y_1} + i \frac{\varepsilon_n}{\rho_n + \varepsilon_n y_1} \phi_n v_{\varepsilon_n} + \left[ V_n + c_n^2\right] v_{\varepsilon_n} = g_{\varepsilon_n, n}(y, |v_{\varepsilon_n}|^2) v_{\varepsilon_n}.
\end{align}
The two-dimensional magnetic potential $A_n(y)$ is given by $A_n(y) = \left( \phi_n(y), A_{3,n}(y) \right)$ and the other functions are defined by 
\begin{align*}
\phi_n, A_{3,n},c_n,V_n,g_{\varepsilon_n,n}(y):=\phi,A_3,c,V,g_{\varepsilon_n}(\rho_n + \varepsilon_n y_1, x_{3,n} + \varepsilon_n y_2).
\end{align*} 
By using the definition of $v_{\varepsilon_n}$ and \eqref{eq:upper_estimate_function}, we obtain the following inequality
\begin{align} \label{eq:inequality_v_n}
\int_{-\frac{\rho_n}{\varepsilon_n}}^{+\infty} \int_{\mathbb{R}} \left[ \left| (i \nabla + A_n) v_{\varepsilon_n} \right|^2 + ( V_n + c_n^2 ) |v_{\varepsilon_n}|^2 \right] (\rho_n + \varepsilon_n y_1) \, \mathrm{d}y_1 \mathrm{d}y_2 = \frac{1}{2 \pi \varepsilon_n^2} \| u_{\varepsilon_n} \|_{\varepsilon_n}^2 \leq C,
\end{align}
for $C > 0$ independent from $n$.

Next, we choose e sequence $R_n$ such that $R_n \to + \infty$ and $\varepsilon_n R_n \to 0$ as $n \to + \infty$, and we define the cut-off function $\eta_{R_n} \in C_{c}^{\infty}(\mathbb{R})$ such that $0 \leq \eta_{R_n} \leq 1$,
\begin{align*}
\eta_{R_n} (y) = \left\{ 
\begin{array}{ll} 
0 \quad & \text{ if } |y| \geq R_n \\
1 \quad & \text{ if } |y| \leq R_n/2
\end{array} \right.
\end{align*}
and $\| \nabla \eta_{R_n} \|_{L^\infty} \leq C/R_n$ for some $C > 0$. Since $\overline{\Lambda} \cap \mathcal{H} = \emptyset$, we have that $\rho_n \rightarrow \bar{\rho} >0$, and then, for $n$ sufficiently large, $\varepsilon_n R_n < \bar{\rho}/2 < \rho_n$.
Set $w_n(y) = \eta_{R_n}(y) v_{\varepsilon_n}(y)$, where $v_{\varepsilon_n}$ is extended by $0$ where it is not defined (anyway $\eta_{R_n} =0$ therein). 

We now estimate the $L^2$-norm of $|w_n|$. Observe that if $y_1^2 + y_2^2 \leq R_n^2$, then $(\rho - \rho_n)^2 + (x_3 - x_{3,n})^2 \leq \varepsilon_n^2 R_n^2$, so that for $n$ large enough, $\rho_n + \varepsilon_n y_1\in \overline{\Lambda}$. 
Hence, since by hypothesis $\inf_{\overline{\Lambda}}(c^2 +V) > 0$ and $(\rho_n - \varepsilon_n R_n) > \bar{\rho}/2$ for $n$ large enough, we infer that 
\begin{align} \label{eq:L2-norm-w_n}
\int_{\mathbb{R}^2} |w_n|^2 \, \mathrm{d}y_1 \mathrm{d}y_2 & \leq \int_{B(0,R_n)} |v_{\varepsilon_n}|^2 \, \mathrm{d}y_1 \mathrm{d}y_2 \\ \nonumber
           & \leq  \frac{2}{\bar{\rho}} \, \sup_{\bar{\Lambda}} \frac{1}{c^2 + V} \int_{B(0,R_n)} |v_{\varepsilon_n}|^2 (c_n^2 + V_n) (\rho_n + \varepsilon_n y_1) \, \mathrm{d}y_1 \mathrm{d}y_2. \nonumber 
\end{align}         
Using the fact that $B(0, R_n) \subset \left( - \frac{\rho_n}{\varepsilon_n}, + \infty \right) \times \mathbb{R}$, for $n$ large enough and \eqref{eq:inequality_v_n}, we deduce the estimate
\begin{align*} 
\int_{\mathbb{R}^2} |w_n|^2 \, \mathrm{d}y_1 \mathrm{d}y_2         
         & \leq   \frac{2}{\bar{\rho}} \, \sup_{\bar{\Lambda}} \frac{1}{c^2 + V} \int_{-\frac{\rho_n}{\varepsilon_n}}^{+\infty} \int_{\mathbb{R}} |v_{\varepsilon_n}|^2 (c_n^2 + V_n) (\rho_n + \varepsilon_n y_1) \, \mathrm{d}y_1 \mathrm{d}y_2 \leq C.
\end{align*}   
Next, we study the $L^2$-norm of $\nabla |w_n|$. By using the diamagnetic inequality \eqref{eq:diamagnetic_epsilon} and arguing as before, we get
\begin{align*}
\int_{\mathbb{R}^2} | \nabla |w_n||^2 \, \mathrm{d}y_1 \mathrm{d}y_2 & \leq \int_{\mathbb{R}^2} | (i \nabla + A_n) (\eta_{R_n} v_{\varepsilon_n})|^2 \, \mathrm{d}y_1 \mathrm{d}y_2 \\ 
       & \leq 2 \int_{\mathbb{R}^2} | (i \nabla + A_n) v_{\varepsilon_n}|^2 \eta_{R_n}^2 \, \mathrm{d}y_1 \mathrm{d}y_2 + 2 \int_{\mathbb{R}^2} |\nabla \eta_{R_n}|^2 |v_{\varepsilon_n}|^2 \, \mathrm{d}y_1 \mathrm{d}y_2 \\ 
        & \leq \frac{4}{\bar{\rho}} \, \sup_{\bar{\Lambda}} \frac{1}{c^2 + V} \int_{B(0,R_n)} \left( | (i \nabla + A_n) v_{\varepsilon_n}|^2 + (c_n^2 +V_n) |v_{\varepsilon_n}|^2 \right) (\rho_n + \varepsilon_n y_1) \, \mathrm{d}y_1 \mathrm{d}y_2\\ & \leq C.        
\end{align*}
We have just shown that $(|w_n|)_n\subset H^1(\mathbb{R}^2,\mathbb{R})$ is a bounded sequence. Hence, there exists a function $|v| \in H^1(\mathbb{R}^2,\mathbb{R})$ such that, up to a subsequence, $|w_n|$ converges weakly to $|v|$. Moreover, we deduce from Sobolev embeddings that the convergence is strong in $L^p_{loc}(\mathbb{R}^2,\mathbb{C})$ for $2 \leq p < + \infty$. 

To prove the convergence in $C^{1,\alpha}_{\text{loc}}$, we consider any compact set $K \subset \mathbb{R}^2$. For $n$ sufficiently large, we have $K \subset B(0,\frac{R_n}{2})$ which implies $w_n = v_n$ in $K$. In that compact set $K$, $w_n$ solves the equation \eqref{eq:equation_v_n} and using a standard bootstrap argument (see for example \cite[Theorem 9.1]{GilbargTrudinger}) and the fact that $w_n \in L^p(K,\mathbb{C})$ for $2 \leq p < + \infty$, we conclude that
\begin{align*}
\sup_{n} \| w_n \|_{W^{2,p}(K)} \leq C.
\end{align*}
Finally, since this estimate holds for all $2 \leq p < + \infty$, Sobolev embeddings imply that $w_n = v_n$ converges in $C^{1,\alpha}(K)$ to $v$. The claim then follows from a diagonal procedure. 

\medbreak

\textbf{Step 2: Limit equation satisfied by $v$.} Since $\Lambda$ is smooth, the characteristic functions converge a.e. to a measurable function $0 \leq \chi(y) \leq 1$. We therefore obtain equation \eqref{eq:equation_v} from \eqref{eq:equation_v_n}  by using the $C^{1,\alpha}_{\text{loc}}$-convergence. Moreover, if $\bar{x} \in \Lambda$, we remark that $\bar{g}(y,|v|^2) = f(|v|^2)$, that is $\chi \equiv 1$. 

\medbreak

\textbf{Step 3: Proof of the estimate \eqref{eq:eq1_v}.} Using the preceding arguments and the $C^{1,\alpha}_{loc}$-convergence, we have 
\begin{align*}
\liminf_{n \to + \infty} \varepsilon_n^{-2}  \int_{B_{cyl}(x_n, \varepsilon_n R)} & \left( | (i \varepsilon_n \nabla + A) u_{\varepsilon_n} |^2 + V(x) |u_{\varepsilon_n}|^2 \right) \, \mathrm{d}x \\
& = 2 \pi \liminf_{n \to + \infty} \int_{B(0,R)} \left[ | (i \nabla + A_n) v_{\varepsilon_n}|^2 + ( c_n^2 + V_n ) |v_{\varepsilon_n}|^2 \right] (\rho_n + \varepsilon_n y_1) \, \mathrm{d}y_1 \mathrm{d}y_2 \\
& = 2 \pi \bar{\rho} \int_{B(0,R)} \left[ | (i \nabla + \bar{A}) v|^2 + \bar{a} |v|^2 \right]  \, \mathrm{d}y_1 \mathrm{d}y_2.
\end{align*}
Finally, we let $R$ go to $+\infty$ to complete the proof. 
\end{proof}

Next, we examine the contribution of $u_\varepsilon$ to the action functional in a neighbourhood of a circle. In particular, we derive a lower estimate on the action of $u_\varepsilon$ which accounts for the number of circles around which $u_\varepsilon$ is non negligible. By combining the next lemmas with the upper estimate on the critical level $c_\varepsilon$, we reach the conclusion that $u_\varepsilon$ concentrates around exactly one circle.

\begin{lemma}[lower bound in a small ball]
Suppose that the assumptions of Theorem \ref{theorem:existence_solutions_penalized} are satisfied. Let $(\varepsilon_n)_n \subset \mathbb{R}^+$ and $(x_n)_n = (\rho_n \cos \theta, \rho_n \sin \theta, x_{3,n})_n \subset \bar{\Lambda}$ be such that $\varepsilon_n \rightarrow 0$ and $x_n \to \bar{x} = (\bar{\rho} \cos \theta, \bar{\rho} \sin \theta, \bar{x}_3) \in \bar{\Lambda}$ as $n \to + \infty$, $\theta \in [0,2\pi)$. Let $(u_{\varepsilon_n})_n \subset \mathcal{X}_{\varepsilon_n}$ be the solutions found in Theorem \ref{theorem:existence_solutions_penalized}. If
\begin{align} \label{eq:maximum_point}
\liminf_{n \to + \infty} |u_{\varepsilon_n}(x_n)| > 0,
\end{align}
then, up to a subsequence, we have
\begin{align*}
\liminf_{R \to + \infty} \liminf_{n \to + \infty} \varepsilon_n^{-2} \int_{B_{cyl}(x_n, \varepsilon_n R)} \frac{1}{2} ( | ( i \varepsilon_n \nabla + A) u_{\varepsilon_n} |^2 + V(x) |u_{\varepsilon_n}|^2 ) - G_{\varepsilon_n}(x,|u_{\varepsilon_n}|^2)\nonumber \\ \geq  \mathcal{M}(\bar{\rho}, \bar{x}_3). 
\end{align*}
\end{lemma}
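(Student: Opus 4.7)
The plan is to rescale the integral around $x_n$, convert it to a two-dimensional action evaluated on the limit $v$ provided by Lemma \ref{lemma:convergence_rescaled_solutions}, and finally estimate that action from below by the ground energy $\mathcal{E}(0,\bar a)$. First I would apply Lemma \ref{lemma:convergence_rescaled_solutions} to the sequences $(\varepsilon_n)$ and $(x_n)$ to obtain $v\in H^1(\mathbb{R}^2,\mathbb{C})$ with $v_{\varepsilon_n}\to v$ in $C^{1,\alpha}_{\text{loc}}(\mathbb{R}^2,\mathbb{C})$, solving equation \eqref{eq:equation_v}. Since $u_{\varepsilon_n}(x_n)=v_{\varepsilon_n}(0)\to v(0)$, the hypothesis $\liminf_n|u_{\varepsilon_n}(x_n)|>0$ yields $|v(0)|>0$ and in particular $v\not\equiv 0$.

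Next I would pass to cylindrical coordinates $(\rho_n+\varepsilon_n y_1,x_{3,n}+\varepsilon_n y_2)$ with Jacobian $2\pi\varepsilon_n^2(\rho_n+\varepsilon_n y_1)$, which cancels the $\varepsilon_n^{-2}$ prefactor. For the quadratic part, identity \eqref{eq:eq1_v} of Lemma \ref{lemma:convergence_rescaled_solutions} gives
\[
\lim_{R\to\infty}\liminf_{n\to\infty}\varepsilon_n^{-2}\!\!\int_{B_{cyl}(x_n,\varepsilon_n R)}\!\!\tfrac12\bigl(|(i\varepsilon_n\nabla+A)u_{\varepsilon_n}|^2+V|u_{\varepsilon_n}|^2\bigr)\,\mathrm{d}x = \pi\bar\rho\!\int_{\mathbb{R}^2}\!\bigl(|(i\nabla+\bar A)v|^2+\bar a|v|^2\bigr)\,\mathrm{d}y.
\]
For the nonlinear term, the $C^{1,\alpha}_{\text{loc}}$ convergence of $v_{\varepsilon_n}$, the a.e. convergence $\chi_n\to\chi$ from Lemma \ref{lemma:convergence_rescaled_solutions}, and dominated convergence on each $B(0,R)$ give, for every fixed $R$,
\[
\lim_{n\to\infty}\varepsilon_n^{-2}\!\!\int_{B_{cyl}(x_n,\varepsilon_n R)}\!\!G_{\varepsilon_n}(x,|u_{\varepsilon_n}|^2)\,\mathrm{d}x = 2\pi\bar\rho\!\!\int_{B(0,R)}\!\!\bar G(y,|v|^2)\,\mathrm{d}y,
\]
with $\bar G(y,s)=\tfrac12\int_0^s\bar g(y,\sigma)\,\mathrm{d}\sigma$. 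Letting $R\to\infty$ and using the $L^1$-bound $\bar G(y,|v|^2)\leq\tfrac1p|v|^p\in L^1(\mathbb{R}^2)$, which follows from the Sobolev embedding $H^1(\mathbb{R}^2)\hookrightarrow L^p(\mathbb{R}^2)$, I obtain the lower bound $2\pi\bar\rho\,\mathcal{J}^{\bar A,\bar a,\bar g}(v)$, where $\mathcal{J}^{\bar A,\bar a,\bar g}$ is the natural two-dimensional penalized action associated to \eqref{eq:equation_v}.

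The final step is to show $\mathcal{J}^{\bar A,\bar a,\bar g}(v)\geq\mathcal{E}(0,\bar a)$. Because $v$ solves \eqref{eq:equation_v} and $\bar g(y,s^2)$ is nondecreasing in $s\geq 0$ by \eqref{eq:property_g3}, the map $t\mapsto\mathcal{J}^{\bar A,\bar a,\bar g}(tv)$ attains its unique maximum at $t=1$. The pointwise inequality $\bar g(y,s)\leq f(s)$ moreover gives $\mathcal{J}^{\bar A,\bar a,\bar g}(tv)\geq \mathcal{J}^{\bar A}_{\bar a}(tv)$ for every $t>0$, so combining this with the variational characterisation of the ground state level and the gauge identity $\mathcal{E}(\bar A,\bar a)=\mathcal{E}(0,\bar a)$ from \eqref{eq:croissance_ground_energy} yields
\[
\mathcal{J}^{\bar A,\bar a,\bar g}(v) = \max_{t>0}\mathcal{J}^{\bar A,\bar a,\bar g}(tv)\geq\max_{t>0}\mathcal{J}^{\bar A}_{\bar a}(tv)\geq\mathcal{E}(\bar A,\bar a)=\mathcal{E}(0,\bar a).
\]
Multiplying by $2\pi\bar\rho$ and invoking the definition \eqref{eq:concentration} of $\mathcal{M}$ together with \eqref{eq:croissance_ground_energy} produces the claimed lower bound $\mathcal{M}(\bar\rho,\bar x_3)$. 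I expect the main difficulty to lie in the nonlinear limit passage: since $G_{\varepsilon_n}$ fails to be continuous in $x$, one must carefully combine the a.e. convergence of $\chi_n$ with the local strong convergence of $v_{\varepsilon_n}$, and then justify the limit $R\to\infty$ via the Sobolev integrability of $|v|^p$ on $\mathbb{R}^2$.
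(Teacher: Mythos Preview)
Your proposal is correct and follows essentially the same approach as the paper: rescale via Lemma~\ref{lemma:convergence_rescaled_solutions}, use the $C^{1,\alpha}_{\text{loc}}$ convergence to pass the local action to $2\pi\bar\rho$ times the two-dimensional penalized action of the nontrivial limit $v$, and then bound this below by $\mathcal{E}(0,\bar a)$ via the Nehari/mountain-pass characterisation together with the pointwise comparison $\bar g\le f$. The paper treats the quadratic and nonlinear parts together in one limit, whereas you split them and supply a bit more detail on the nonlinear limit (dominated convergence on $B(0,R)$ and the Sobolev bound $|v|^p\in L^1(\mathbb{R}^2)$ for $R\to\infty$); this is a harmless and arguably cleaner variation of the same argument.
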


\begin{proof} We set again $v_{\varepsilon_n}$ as in \eqref{eq:rescaled}.
First, by \eqref{eq:maximum_point}, $|v(0)| = \lim_{n \to + \infty} |v_{\varepsilon_n}(0)| > 0$, then $v$ is not identically zero. Moreover, we know from Lemma \ref{eq:rescaled} that $v$ satisfies the equation \eqref{eq:equation_v}. This implies that $v$ is a critical point of the functional $\mathcal{G}^{\bar{A}}_{\bar{a}} : H^1(\mathbb{R}^2, \mathbb{C}) \rightarrow \mathbb{R}$ defined by
\begin{align*}
\mathcal{G}^{\bar{A}}_{\bar{a}}(u) = \frac{1}{2} \int_{\mathbb{R}^2} | (i \nabla + \bar{A}) u|^2 + \bar{a} |u|^2 \, \mathrm{d}y - \int_{\mathbb{R}^2} \bar{G}(y, |u|^2) \, \mathrm{d}y,
\end{align*}
$\bar{a}$ and $\bar{A}$ being defined in Lemma \ref{lemma:convergence_rescaled_solutions}, and where
\begin{align*}
\bar{G}(y,s) = \frac{1}{2} \int_{0}^s \bar{g}(y, \sigma) \, \mathrm{d}\sigma.
\end{align*}
Since $\bar{g}(y, |u|^2) \leq f(|u|^2)$, it follows immediately that
\begin{align*}
\mathcal{G}^{\bar{A}}_{\bar{a}}(u) \geq \mathcal{J}^{\bar{A}}_{\bar{a}}(u).
\end{align*}
Since $v$ is a critical point of $\mathcal{G}^{\bar{A}}_{\bar{a}}$ and $\bar{g}$ satisfies the property \eqref{eq:property_g3}, we have that
\begin{align*}
\mathcal{G}^{\bar{A}}_{\bar{a}}(v) = \sup_{t >0} \mathcal{G}^{\bar{A}}_{\bar{a}}(tv) & \geq \inf_{u \in H^1(\mathbb{R}^2,\mathbb{C})} \sup_{t >0} \mathcal{G}^{\bar{A}}_{\bar{a}}(tu) \\
   & \geq \inf_{u \in H^1(\mathbb{R}^2,\mathbb{C})} \sup_{t >0} \mathcal{J}^{\bar{A}}_{\bar{a}}(tu) = \mathcal{E}(\bar{A},\bar{a}) = \mathcal{E}(0,1) \, \bar{a}^{\frac{2}{p-2}}.
\end{align*}
By using the $C^{1,\alpha}_{\text{loc}}$-convergence of the sequence $(v_{\varepsilon_n})_n$, we obtain that
\begin{align*}
& \liminf_{n \to + \infty} \varepsilon_n^{-2} \int_{B_{cyl}(x_n, \varepsilon_n R)} \left[ \frac{1}{2} \left( | ( i \varepsilon_n \nabla + A) u_{\varepsilon_n} |^2 + V(x) |u_{\varepsilon_n}|^2 \right) - G_{\varepsilon_n}(x,|u_{\varepsilon_n}|^2) \right] \, \mathrm{d}x = \\
& 2 \pi  \liminf_{n \to + \infty} \int_{B(0,R)} \left[ \frac{1}{2} \left( | ( i \nabla + A_n) v_{\varepsilon_n} |^2 + ( V_n  + c_n^2 ) |v_{\varepsilon_n}|^2 \right) - G_{\varepsilon_n,n}(y,|v_{\varepsilon_n}|^2) \right] (\rho_n + \varepsilon_n y_1) \, \mathrm{d}y =  \\
& 2 \pi  \bar{\rho} \int_{B(0,R)} \left[ \frac{1}{2} \left( | ( i \nabla + \bar{A}) v |^2 +  \bar{a} |v|^2 \right) - \bar{G}(y,|v|^2) \right] \, \mathrm{d}y .
\end{align*}
Finally, we let $R \to + \infty$ to conclude.

\end{proof}

The following lemma estimates what happens outside the small balls where $u_{\varepsilon}$ concentrates.  In particular we show that the contribution to the action of $u_{\varepsilon}$ is  nonnegative so that the lower estimate from the preceding lemma is meaningful.

\begin{lemma}[Inferior bound outside small balls]
Assume that the assumptions of Theorem \ref{theorem:existence_solutions_penalized} are satisfied. Let $(\varepsilon_n)_n \subset \mathbb{R}^+$ and $(x_n^i)_n = (\rho_n^i \cos \theta, \rho_n^i \sin \theta, x_{3,n}^i)_n \subset \bar{\Lambda}$ be such that $\varepsilon_n \rightarrow 0$ and $x_n^i \to \bar{x}^i = (\bar{\rho}^i \cos \theta, \bar{\rho}^i \sin \theta, \bar{x}_3^i) \in \bar{\Lambda}$, for $1 \leq i \leq M$, as $n \to + \infty$, $\theta \in [0,2\pi)$. Let $(u_{\varepsilon_n})_n \subset \mathcal{X}_{\varepsilon_n}$ be the solutions found in Theorem \ref{theorem:existence_solutions_penalized}.  Then, up to a subsequence, we have
\begin{align} \label{eq:lower_estimate_2}
\liminf_{R \to + \infty} \liminf_{n \to + \infty} \varepsilon_n^{-2} \int_{\mathbb{R}^3 \backslash \mathcal{B}_n(R)} \frac{1}{2} \left( | ( i \varepsilon_n \nabla + A) u_{\varepsilon_n} |^2 + V(x) |u_{\varepsilon_n}|^2 \right) - G_{\varepsilon_n}(x,|u_{\varepsilon_n}|^2) \geq 0,
\end{align}
where 
\begin{equation}\label{eq:defBnR}
\mathcal{B}_n(R) = \cup_{i = 1}^M B_{cyl}(x_n^i, \varepsilon_n R).
\end{equation}
\end{lemma}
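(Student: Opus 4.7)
The plan is to test the Euler--Lagrange equation satisfied by $u_{\varepsilon_n}$ against a cutoff $\phi_n u_{\varepsilon_n}$ vanishing on the concentration balls, then to combine the resulting Pohozaev-type identity with the Ambrosetti--Rabinowitz-type inequality for $g_{\varepsilon_n}$. Concretely, I would pick $\phi_n \in C^\infty(\mathbb{R}^3,[0,1])$ with $\phi_n \equiv 0$ on $\mathcal{B}_n(R/2)$, $\phi_n \equiv 1$ on $\mathbb{R}^3 \setminus \mathcal{B}_n(R)$ and $|\nabla \phi_n| \leq C/(\varepsilon_n R)$. By Palais' symmetric criticality principle, $u_{\varepsilon_n}$ is a weak solution of \eqref{eq:penalized_problem} in the full space $H^1_{A,V,\varepsilon_n}(\mathbb{R}^3,\mathbb{C})$ (so $\phi_n$ need not be $G$-invariant), and $\langle \mathcal{J}'_{\varepsilon_n}(u_{\varepsilon_n}), \phi_n u_{\varepsilon_n}\rangle = 0$ reads
\begin{equation*}
\int_{\mathbb{R}^3} \phi_n \bigl[|(i\varepsilon_n\nabla + A) u_{\varepsilon_n}|^2 + V|u_{\varepsilon_n}|^2 - g_{\varepsilon_n}(x,|u_{\varepsilon_n}|^2)|u_{\varepsilon_n}|^2\bigr]\,\mathrm{d}x = -\varepsilon_n\,\mathrm{Im}\!\int_{\mathbb{R}^3} (i\varepsilon_n\nabla + A) u_{\varepsilon_n}\cdot \bar{u}_{\varepsilon_n}\,\nabla\phi_n\,\mathrm{d}x.
\end{equation*}

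The right-hand side is of order $\varepsilon_n^2/R$: by Cauchy--Schwarz it is bounded by $C\varepsilon_n \|u_{\varepsilon_n}\|_{\varepsilon_n}\cdot (\varepsilon_n R)^{-1}\|u_{\varepsilon_n}\|_{L^2(\mathrm{supp}\,\nabla\phi_n)}$; the first factor is $O(\varepsilon_n)$ by Proposition \ref{prop:upper_estimate}, and for the second one uses that $\mathrm{supp}\,\nabla\phi_n \subset \mathcal{B}_n(R)$ lies, for $n$ large, in a neighbourhood of $\{\bar{x}^i\}_{i=1}^M \subset \bar{\Lambda}$ where $V \geq v_0' > 0$ (by continuity of $V$ and \eqref{eq:condition_sur_Lambda2}), whence the coercivity of $V$ there yields $\|u_{\varepsilon_n}\|_{L^2(\mathrm{supp}\,\nabla\phi_n)} = O(\varepsilon_n)$. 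Invoking then the Ambrosetti--Rabinowitz-type bound $g_{\varepsilon_n}(x,s)s \geq 2 G_{\varepsilon_n}(x,s)$ (valid everywhere by \eqref{eq:property_g1}--\eqref{eq:property_g2}), the testing identity gives $\varepsilon_n^{-2}\int_{\mathbb{R}^3} \phi_n \mathcal{F}_n\,\mathrm{d}x \geq -C/R$, where $\mathcal{F}_n$ denotes the integrand appearing in \eqref{eq:lower_estimate_2}.

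It remains to compare $\int \phi_n \mathcal{F}_n$ with $\int_{\mathbb{R}^3\setminus\mathcal{B}_n(R)} \mathcal{F}_n$, which amounts to estimating the contribution from the transition annulus $\mathcal{B}_n(R)\setminus \mathcal{B}_n(R/2)$. For each $i$, the rescaled sequence $v_{\varepsilon_n}^i(y) = u_{\varepsilon_n}(x_n^i + \varepsilon_n y)$ converges in $C^{1,\alpha}_{\mathrm{loc}}(\mathbb{R}^2,\mathbb{C})$ to some $v^i \in H^1(\mathbb{R}^2,\mathbb{C})$ by Lemma \ref{lemma:convergence_rescaled_solutions}, so a change of variables gives
\begin{equation*}
\lim_{n\to\infty}\varepsilon_n^{-2}\!\int_{B_{cyl}(x_n^i,\varepsilon_n R)\setminus B_{cyl}(x_n^i,\varepsilon_n R/2)} |\mathcal{F}_n|\,\mathrm{d}x = 2\pi\bar{\rho}^i\!\int_{B(0,R)\setminus B(0,R/2)} |\mathcal{F}_{v^i}|\,\mathrm{d}y,
\end{equation*}
with $\mathcal{F}_{v^i} = \tfrac12(|(i\nabla + \bar{A}^i)v^i|^2 + \bar{a}^i|v^i|^2) - \bar{G}^i(y,|v^i|^2) \in L^1(\mathbb{R}^2)$ (since $v^i \in H^1 \cap L^q$ for every $q \geq 2$ and $|\bar{G}^i(y,s)| \leq C(s+s^{p/2})$); hence the tail vanishes as $R \to \infty$. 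Summing over $i$ and passing to the $\liminf$ in $n$ and then in $R$ yields \eqref{eq:lower_estimate_2}. The main technical obstacle is the sharp cross-term estimate of order $O(\varepsilon_n^2/R)$, which relies simultaneously on the global $O(\varepsilon_n)$ energy bound from Proposition \ref{prop:upper_estimate} and on the localized coercivity of $V$ near $\bar{\Lambda}$ provided by \eqref{eq:condition_sur_Lambda2}; once this is secured, the remainder of the argument is a routine separation of scales.
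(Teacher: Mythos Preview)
Your proposal is correct and follows essentially the same strategy as the paper's proof: testing the penalized equation against a cutoff $\phi_n u_{\varepsilon_n}$ that vanishes on $\mathcal{B}_n(R/2)$ and equals $1$ outside $\mathcal{B}_n(R)$, invoking $g_{\varepsilon_n}(x,s)s \ge 2G_{\varepsilon_n}(x,s)$, and then showing that both the gradient cross-term and the annular defect vanish in the double limit. The only cosmetic difference is that the paper controls the cross-term and the annular piece via the localized quantity $I_{n,R}^2 = \int_{\mathcal{A}_n(R)}(|(i\varepsilon_n\nabla+A)u_{\varepsilon_n}|^2 + V|u_{\varepsilon_n}|^2)$ together with \eqref{eq:eq1_v}, whereas you go directly through the global $O(\varepsilon_n)$ bound of Proposition~\ref{prop:upper_estimate} and the $C^{1,\alpha}_{\mathrm{loc}}$ convergence of the rescaled solutions; both routes lead to the same $o(1)$ errors as $R\to\infty$.
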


\begin{proof}
We consider yet another smooth test function $\eta_{R,\varepsilon_n}$ such that $\eta_{R,\varepsilon_n} = 0$ on $\mathcal{B}_n(R/2)$, $\eta_{R,\varepsilon_n}= 1$ on $\mathbb{R}^2 \backslash \mathcal{B}_n(R)$ and $\| \nabla \eta_{R,\varepsilon_n} \|_{L^\infty} \leq C/(\varepsilon_n R)$. From \eqref{eq:property_g1} and \eqref{eq:property_g2}, we infer that
\begin{align*}
\int_{\mathbb{R}^3 \backslash \mathcal{B}_n(R)} & \left[ \frac{1}{2} \left( | ( i \varepsilon_n \nabla + A) u_{\varepsilon_n} |^2 + V(x) |u_{\varepsilon_n}|^2 \right) - G_{\varepsilon_n}(x,|u_{\varepsilon_n}|^2) \right] \, \mathrm{d}x \\
& \geq \int_{\mathbb{R}^3 \backslash \mathcal{B}_n(R)} \left[ \frac{1}{2} \left( | ( i \varepsilon_n \nabla + A) u_{\varepsilon_n} |^2 + V(x) |u_{\varepsilon_n}|^2 \right) - g_{\varepsilon_n}(x,|u_{\varepsilon_n}|^2)|u_{\varepsilon_n}|^2 \right] \, \mathrm{d}x.
\end{align*}
If we test the equation  \eqref{eq:penalized_problem} on $(u_{\varepsilon_n} \eta_{R,\varepsilon_n})$, we obtain
\begin{align*}
0 & = \int_{\mathbb{R}^3 \backslash \mathcal{B}_n(R)} \left[| (i \varepsilon_n \nabla + A) u_{\varepsilon_n}|^2 + V(x) |u_{\varepsilon_n}|^2 - g_{\varepsilon_n}(x, |u_{\varepsilon_n}|^2)|u_{\varepsilon_n}|^2 \right] \, \mathrm{d}x \\
& + \int_{\mathcal{B}_n(R) \backslash \mathcal{B}_n(R/2)} \left[ | (i \varepsilon_n \nabla + A) u_{\varepsilon_n}|^2 + V(x) |u_{\varepsilon_n}|^2 - g_{\varepsilon_n}(x, |u_{\varepsilon_n}|^2)|u_{\varepsilon_n}|^2 \right] \eta_{R,\varepsilon_n}^2 \, \mathrm{d}x \\
& - i \varepsilon_n \int_{\mathcal{B}_n(R) \backslash \mathcal{B}_n(R/2)} \nabla \eta_{R,\varepsilon_n} \cdot (i \varepsilon_n \nabla + A) u_{\varepsilon_n} \overline{u_{\varepsilon_n}} \, \mathrm{d}x.
\end{align*}
Then, to deduce the estimate \eqref{eq:lower_estimate_2}, it is enough to estimate the last two integrals in the annular region $\mathcal{A}_n(R) = \mathcal{B}_n(R) \backslash \mathcal{B}_n(R/2)$.

We start with the first of these two terms. Thanks to the fact that $\mathcal{A}_n(R)$ is a bounded set having the cylindrical symmetry and such that $\overline{\mathcal{A}_n(R)} \cap \mathcal{H}=\emptyset$, we can use the compact embeddings from Lemma \ref{lemma:cylindrical-embeddings-compact}. Then, we conclude that
\begin{align*}
& \liminf_{n \to +\infty} \varepsilon_n^{-2} \left| \int_{\mathcal{A}_n(R)} \left[ | (i \varepsilon_n \nabla + A) u_{\varepsilon_n}|^2 + V(x) |u_{\varepsilon_n}|^2 - g_{\varepsilon_n}(x, |u_{\varepsilon_n}|^2)|u_{\varepsilon_n}|^2 \right] \eta_{R,\varepsilon_n}^2 \, \mathrm{d}x \right| \\
& \qquad \qquad \qquad \qquad \leq \liminf_{n \to + \infty}  \varepsilon_n^{-2} C \left( I_{n,R}^2 + I_{n,R}^q \right),
\end{align*}
where we denoted
\begin{align*}
I_{n,R} = \left[ \int_{\mathcal{A}_n(R)} \left[ | ( i \varepsilon_n + A) u_{\varepsilon_n}|^2 + V(x)  |u_{\varepsilon_n}|^2  \right] \, \mathrm{d}x \right]^{\frac{1}{2}}.
\end{align*}
Next, we estimate the second term
\begin{align*}
& \liminf_{n \to + \infty} \varepsilon_n^{-2} \left| \varepsilon_n \int_{\mathcal{A}_n(R)} (i \varepsilon_n \nabla + A) u_{\varepsilon_n} \cdot \nabla \eta_{R,\varepsilon_n} \overline{u_{\varepsilon_n}} \, \mathrm{d}x \right| \\ 
& \qquad \leq \liminf_{n \to + \infty} C \varepsilon_n^{-2}R^{-1} \int_{\mathcal{A}_n(R)} | (i \varepsilon_n \nabla + A ) u_{\varepsilon_n}| |u_{\varepsilon_n}| \, \mathrm{d}x \\
& \qquad \leq \liminf_{n \to + \infty} 2C \varepsilon_n^{-2}R^{-1} \int_{\mathcal{A}_n(R)} \left( |(i \varepsilon_n \nabla + A) u_{\varepsilon_n}|^2 + |u_{\varepsilon_n}|^2 \right) \, \mathrm{d}x \\
& \qquad \leq \liminf_{n \to + \infty} 2C \varepsilon_n^{-2}R^{-1} I^2_{n,R}.
\end{align*}
Finally, by taking the $\liminf_{R \to +\infty}$ and using relation \eqref{eq:eq1_v}, we obtain that both integrals converge to zero, which concludes the result.
\end{proof}

The next lemma combines the informations from the two preceding ones and yields a lower bound on the action of $u_\varepsilon$ as a function of the points in $\bar\Lambda$ where the solution concentrates.

\begin{lemma}[lower bound on the critical level] \label{lemma:inferior_bound_several_balls}
Suppose that the assumptions of Theorem \ref{theorem:existence_solutions_penalized} are satisfied. Let $(\varepsilon_n)_n \subset \mathbb{R}^+$ and $(x_n^i)_n = (\rho_n^i \cos \theta, \rho_n^i \sin \theta, x_{3,n}^i)_n \subset \bar{\Lambda}$ be such that $\varepsilon_n \rightarrow 0$ and $x_n^i \to \bar{x}^i = (\bar{\rho}^i \cos \theta , \bar{\rho}^i \sin \theta, x_{3}^i) \in \bar{\Lambda}$, for $1 \leq i \leq M$, as $n \to + \infty$, $\theta \in [0,2\pi)$. Let $(u_{\varepsilon_n})_n \subset \mathcal{X}_{\varepsilon_n}$ be the solutions found in Theorem \ref{theorem:existence_solutions_penalized}. Assume that for every $1 \leq i < j \leq M$, we have
\begin{align} \label{eq:distinct_ball}
\limsup_{n \to + \infty} \frac{\mathrm{d}_{cyl}(x_n^i,x_n^j)}{\varepsilon_n} = + \infty,
\end{align}
and
\begin{align*}
\liminf_{n \to + \infty} | u_{\varepsilon_n}(x_n^i)| > 0.
\end{align*}
Then it holds
\begin{align*}
\liminf_{n \to + \infty} \varepsilon_n^{-2} c_{\varepsilon_n} \geq \sum_{i = 1}^M \mathcal{M}(\bar{\rho}^i, \bar{x}_{3}^i).
\end{align*} 
\end{lemma}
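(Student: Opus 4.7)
The plan is to decompose the action $c_{\varepsilon_n} = \mathcal{J}_{\varepsilon_n}(u_{\varepsilon_n})$ into contributions localized around each of the $M$ concentration points, plus a far-field remainder, and then to invoke the two preceding lemmas on each piece.

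First, the separation hypothesis \eqref{eq:distinct_ball} combined with a diagonal extraction over the finitely many pairs $(i,j)$ allows us to pass to a subsequence (still denoted $(\varepsilon_n)_n$) along which $\mathrm{d}_{cyl}(x_n^i, x_n^j)/\varepsilon_n \to +\infty$ for every $1 \leq i < j \leq M$. Consequently, for any fixed $R>0$ and all $n$ sufficiently large (depending on $R$), the torus-shaped balls $B_{cyl}(x_n^i, \varepsilon_n R)$, $i=1,\ldots,M$, are pairwise disjoint, so we may write
$$
\varepsilon_n^{-2} c_{\varepsilon_n} = \sum_{i=1}^{M} \varepsilon_n^{-2}\, \mathcal{I}_n^{i}(R) \; + \; \varepsilon_n^{-2}\, \mathcal{R}_n(R),
$$
where
$$
\mathcal{I}_n^{i}(R) = \int_{B_{cyl}(x_n^i,\varepsilon_n R)} \left[ \tfrac{1}{2}\bigl( |(i\varepsilon_n\nabla + A)u_{\varepsilon_n}|^2 + V|u_{\varepsilon_n}|^2 \bigr) - G_{\varepsilon_n}(x,|u_{\varepsilon_n}|^2) \right]\,\mathrm{d}x
$$
and $\mathcal{R}_n(R)$ is the analogous integral over $\mathbb{R}^3 \setminus \mathcal{B}_n(R)$, with $\mathcal{B}_n(R)$ given by \eqref{eq:defBnR}.

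Second, taking $\liminf_{n\to\infty}$ and using the superadditivity of $\liminf$ for finitely many terms, then taking $\liminf_{R\to\infty}$ (noting that the left-hand side is independent of $R$) and applying superadditivity once more, we obtain
$$
\liminf_{n\to\infty} \varepsilon_n^{-2} c_{\varepsilon_n} \;\geq\; \sum_{i=1}^{M} \liminf_{R\to\infty}\liminf_{n\to\infty} \varepsilon_n^{-2}\, \mathcal{I}_n^{i}(R) \;+\; \liminf_{R\to\infty}\liminf_{n\to\infty} \varepsilon_n^{-2}\, \mathcal{R}_n(R).
$$
The lower bound in a small ball, applied at each $x_n^i$ using the nonvanishing assumption $\liminf_n |u_{\varepsilon_n}(x_n^i)|>0$, gives $\liminf_R \liminf_n \varepsilon_n^{-2} \mathcal{I}_n^{i}(R) \geq \mathcal{M}(\bar{\rho}^i, \bar{x}_3^i)$, while the inferior bound outside small balls, applied to the whole collection $\{x_n^i\}_{i=1}^M$, yields $\liminf_R \liminf_n \varepsilon_n^{-2} \mathcal{R}_n(R) \geq 0$. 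Substituting these two estimates produces the claimed lower bound.

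The main technical obstacle is the subsequence extraction in the first step: because the separation is stated via $\limsup$ and can involve pairs whose limits $\bar{x}^i, \bar{x}^j$ coincide, some care is required to extract a single subsequence along which all $\binom{M}{2}$ cylindrical separations diverge simultaneously. Once this is secured (and since passing to a subsequence can only increase $\liminf_n \varepsilon_n^{-2} c_{\varepsilon_n}$, in the relevant direction of the inequality it is harmless), the disjointness of the localizing balls at every scale $R$ is automatic and the remainder of the proof is essentially bookkeeping based on the two preceding lemmas.
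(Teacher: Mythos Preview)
Your approach is exactly the paper's: split $\mathcal{J}_{\varepsilon_n}(u_{\varepsilon_n})$ into the $M$ localized pieces plus the remainder on $\mathbb{R}^3\setminus\mathcal{B}_n(R)$, apply the two preceding lemmas, and conclude. The paper phrases the limit in $R$ via an auxiliary $\delta>0$ (choosing $R_\delta$ so that each piece is within $\delta$ of its limiting lower bound), while you take $\liminf_{R\to\infty}\liminf_{n\to\infty}$ directly and invoke superadditivity; these are equivalent.

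There is, however, one genuine slip in your final paragraph. You write that ``passing to a subsequence can only increase $\liminf_n \varepsilon_n^{-2} c_{\varepsilon_n}$, in the relevant direction of the inequality it is harmless.'' This is the wrong direction: if you prove the lower bound only along a subsequence, the inequality $\liminf_{\text{full}}\le \liminf_{\text{subseq}}$ does \emph{not} let you transfer it back to the full sequence. The honest way to handle this is to observe that the two preceding lemmas are themselves stated ``up to a subsequence,'' so the present lemma should be read the same way (and this is all that is needed in the applications in Proposition~\ref{prop:uniform_convergence_outside_small_balls}, which argue by contradiction). Alternatively, one uses the standard sub-subsequence argument: show that \emph{every} subsequence admits a further subsequence along which the bound holds.

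A related caution: your ``diagonal extraction over the finitely many pairs'' is not automatic from the hypothesis as stated. Since \eqref{eq:distinct_ball} is a $\limsup$, extracting for the pair $(1,2)$ may destroy the divergence for $(1,3)$ along the resulting subsequence. For $M\le 2$ (the only case actually used later) there is a single pair and no issue; for general $M$ one should either strengthen the hypothesis to $\liminf$ or accept that the conclusion holds only along a suitable subsequence. The paper's own proof glosses over this point in the same way.
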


\begin{proof}
We infer from the previous lemmas that for every $\delta > 0$, there exists $R_{\delta} >0$ large enough, such that for all $R > R_\delta$
\begin{align*}
& \liminf_{n \to + \infty} \varepsilon_n^{-2} \int_{\mathbb{R}^3 \backslash \mathcal{B}_n(R)} \left[ \frac{1}{2} ( | (i \varepsilon_n \nabla + A) u_{\varepsilon_n} |^2 + V(x) |u_{\varepsilon_n}|^2 ) - G_{\varepsilon_n}(x,|u_{\varepsilon_n}|^2) \right] \, \mathrm{d}x \geq - \delta \\
&  \liminf_{n \to + \infty} \varepsilon_n^{-2} \int_{B_{cyl}(x_n^i, \varepsilon_n R)} \left[ \frac{1}{2} ( | (i \varepsilon_n \nabla + A) u_{\varepsilon_n}|^2 + V(x) |u_{\varepsilon_n}|^2 ) - G_{\varepsilon_n}(x,|u_{\varepsilon_n}|^2) \right] \, \mathrm{d}x \geq  \mathcal{M}(\bar{\rho}^i, \bar{x}_3^i) - \delta,
\end{align*}
where $\mathcal{B}_n(R)$ is defined in \eqref{eq:defBnR}. 
Then, thanks to the hypothesis \eqref{eq:distinct_ball}, the balls are disjoint. We then decompose $\varepsilon_n^{-2} \mathcal{J}(u_{\varepsilon_n})$ as the sum of the $M$ integrals on each ball $B_{cyl}(x_n^i, \varepsilon_n R)$ and one integral in $\mathbb{R}^3 \backslash \mathcal{B}_n(R)$. We then have
\begin{align*}
\liminf_{n \to + \infty} \varepsilon_n^{-2} \mathcal{J}(u_{\varepsilon_n}) \geq  \sum_{i=1}^M \mathcal{M}(\bar{\rho}^i, \bar{x}_3^i) - (M+1) \delta. 
\end{align*}
Since $\delta >0$ is arbitrary, the conclusion follows.
\end{proof}

The following proposition is a key result of the proof. It concludes to the existence of a sequence of maximum points for $u_{\varepsilon}$ in $\bar{\Lambda}$ and tells us that that sequence of maximum points will in fact converge to the point of infimum of our concentration function $\mathcal{M}$ at the interior of $\Lambda$. 

\begin{proposition} \label{prop:uniform_convergence_outside_small_balls}
Suppose that the assumptions of Theorem \ref{theorem:existence_solutions_penalized} are satisfied. Let $(u_{\varepsilon})_\varepsilon \subset \mathcal{X}_\varepsilon$ be the solutions found in Theorem \ref{theorem:existence_solutions_penalized} for $\varepsilon > 0$. Then, there exist $(x_\varepsilon)_\varepsilon = (\rho_\varepsilon \cos \theta, \rho_\varepsilon \sin \theta, x_{3,\varepsilon})_\varepsilon \subset \bar{\Lambda}$ such that
\begin{align} \label{eq:maximum_u}
\liminf_{\varepsilon \to 0} | u_{\varepsilon}(x_\varepsilon)| > 0.
\end{align}
Moreover, we have
\begin{itemize}
\item[(i)] $\displaystyle \limsup_{\varepsilon \to 0} \frac{\mathrm{d}_{cyl}(x_\varepsilon, \mathcal{H}^\perp)}{\varepsilon} < + \infty$, that is $x_{3,\varepsilon} \to 0$;
\item[(ii)] $ \displaystyle \liminf_{\varepsilon \to 0} \mathrm{d}_{cyl}(x_\varepsilon, \partial \Lambda)> 0$;
\item[(iii)] $\displaystyle \lim_{\varepsilon \to 0} \mathcal{M}(x_\varepsilon) = \inf_{\Lambda \cap \mathcal{H}^{\perp}} \mathcal{M}$;
\item[(iv)] for every $\delta > 0$, there exists $R_\delta > 0$, such that for every $R > R_\delta$ there exist $\varepsilon_R >0$ such that, for every $\varepsilon < \varepsilon_R$, $| u_{\varepsilon}| < \delta$ in $\Lambda \backslash B_{cyl}(x_\varepsilon,\varepsilon R)$.
\end{itemize}
\end{proposition}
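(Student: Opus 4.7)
The plan is to pick, for each small $\varepsilon$, a maximiser $x_\varepsilon=(\rho_\varepsilon\cos\theta,\rho_\varepsilon\sin\theta,x_{3,\varepsilon})$ of $|u_\varepsilon|$ on $\overline{\Lambda}$, so that Proposition~\ref{prop:no_uniform_convergence_in_lambda} immediately yields \eqref{eq:maximum_u}. Working along an arbitrary subsequence $\varepsilon_n\to 0$ with $x_{\varepsilon_n}\to \bar{x}\in\overline{\Lambda}$, the single-point case of Lemma~\ref{lemma:inferior_bound_several_balls} combined with the upper estimate of Proposition~\ref{prop:upper_estimate} (whose test-function construction actually furnishes the sharper control $\limsup_{\varepsilon\to 0}\varepsilon^{-2}c_\varepsilon\le \inf_{\Lambda\cap\mathcal{H}^\perp}\mathcal{M}$) will give the key inequality
\begin{equation*}
\mathcal{M}(\bar{x})\;\le\;\inf_{\Lambda\cap\mathcal{H}^\perp}\mathcal{M}.
\end{equation*}

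The driving idea for (i)-(iv) is to manufacture a spurious second concentration point and to contradict this bound through the two-point version of Lemma~\ref{lemma:inferior_bound_several_balls}. For (i) I exploit the reflection $g_0\in G$ with matrix $\operatorname{diag}(1,1,-1)$: since $u_\varepsilon\circ g_0 = u_\varepsilon$, one has $|u_\varepsilon(g_0 x_\varepsilon)|=|u_\varepsilon(x_\varepsilon)|$ and $\mathrm{d}_{cyl}(x_\varepsilon,g_0 x_\varepsilon)=2|x_{3,\varepsilon}|$. If $|x_{3,\varepsilon_n}|/\varepsilon_n\to+\infty$ along some subsequence, the pair $(x_{\varepsilon_n},g_0 x_{\varepsilon_n})$ satisfies the separation hypothesis of Lemma~\ref{lemma:inferior_bound_several_balls}, producing $\liminf_n \varepsilon_n^{-2}c_{\varepsilon_n}\ge 2\mathcal{M}(\bar{x})\ge 2\inf_{\overline{\Lambda}}\mathcal{M}$; the second part of \eqref{eq:condition_sur_Lambda} then clashes with the upper estimate, so (i) holds and $\bar{x}\in\overline{\Lambda}\cap\mathcal{H}^\perp$. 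Property (ii) is then immediate: if $\bar{x}\in\partial\Lambda\cap\mathcal{H}^\perp$, the first part of \eqref{eq:condition_sur_Lambda} forces $\mathcal{M}(\bar{x})\ge \inf_{\partial\Lambda\cap\mathcal{H}^\perp}\mathcal{M}>\inf_{\Lambda\cap\mathcal{H}^\perp}\mathcal{M}\ge\mathcal{M}(\bar{x})$, a contradiction. Property (iii) follows because $\bar{x}\in\Lambda\cap\mathcal{H}^\perp$ then forces $\mathcal{M}(\bar{x})=\inf_{\Lambda\cap\mathcal{H}^\perp}\mathcal{M}$; every subsequential cluster point of $\mathcal{M}(x_\varepsilon)$ is this common value, and the continuity of $\mathcal{M}$ on $\overline{\Lambda}$ upgrades this to a true limit.

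Property (iv) is a direct two-point contradiction. Negating it produces sequences $\varepsilon_n\to 0$, $R_n\to+\infty$ and $y_n\in\Lambda\setminus B_{cyl}(x_{\varepsilon_n},\varepsilon_n R_n)$ with $|u_{\varepsilon_n}(y_n)|\ge\delta$; up to extraction $y_n\to\bar{y}\in\overline{\Lambda}$, and since $\mathrm{d}_{cyl}(x_{\varepsilon_n},y_n)/\varepsilon_n\ge R_n\to+\infty$, Lemma~\ref{lemma:inferior_bound_several_balls} applied to $(x_{\varepsilon_n},y_n)$ forces $\liminf_n\varepsilon_n^{-2}c_{\varepsilon_n}\ge \mathcal{M}(\bar{x})+\mathcal{M}(\bar{y})=\inf_{\Lambda\cap\mathcal{H}^\perp}\mathcal{M}+\mathcal{M}(\bar{y})$. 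Since $V>0$ on $\overline{\Lambda}$ and $\bar{y}$ avoids $\mathcal{H}$, the positivity $\mathcal{M}(\bar{y})>0$ clashes with Proposition~\ref{prop:upper_estimate}. The main obstacle lies in step (i): one has to convert the hypothetical off-plane concentration $|x_{3,\varepsilon}|/\varepsilon\to+\infty$ into a genuine second cylindrical concentration region at distance $\gg\varepsilon$ from the first, and it is precisely the $G$-invariance of $u_\varepsilon$ that performs this conversion automatically, while the inequality $\inf_{\Lambda\cap\mathcal{H}^\perp}\mathcal{M}<2\inf_{\Lambda}\mathcal{M}$ from \eqref{eq:condition_sur_Lambda} is tailored to rule out this doubled scenario.
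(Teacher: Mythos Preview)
Your proof is correct and follows essentially the same route as the paper: choose $x_\varepsilon$ as a maximiser of $|u_\varepsilon|$ on $\overline{\Lambda}$, use the reflection $g_0=\mathrm{diag}(1,1,-1)\in G$ together with Lemma~\ref{lemma:inferior_bound_several_balls} and the upper estimate to force the contradictions in (i)--(iv). Your treatment is in fact slightly tidier in two spots---you correctly flag that the $\limsup$ version of Proposition~\ref{prop:upper_estimate} (which the test-function argument does yield) is what one actually needs when combining with the $\liminf$ lower bound, and in (iv) you conclude directly from the strict positivity $\mathcal{M}(\bar{y})>0$ rather than first arguing that $\bar{y}\in\mathcal{H}^\perp$ as the paper does.
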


\begin{proof}
First, observe that the existence of a sequence $(x_\varepsilon)_\varepsilon \subset \bar{\Lambda}$ of local maximum points of $|u_{\varepsilon}|$ in $\bar{\Lambda}$ follows from the continuity of $u_{\varepsilon}$. The estimate \eqref{eq:maximum_u} holds because we know from Proposition \ref{prop:no_uniform_convergence_in_lambda} that $u_{\varepsilon}$ does not converge uniformly to zero in $\bar{\Lambda}$. 

\medbreak 
 
\textbf{Proof of assertion} (i). By contradiction, assume that there exist sequences $(\varepsilon_n)_n \subset \mathbb{R}^+$ and $(x_n)_n \subset \bar{\Lambda}$ such that $\varepsilon_n \to 0$ and $x_n \to \bar{x} = (\bar{\rho} \cos \theta, \bar{\rho} \sin \theta, \bar{x}_3) \in \bar{\Lambda}$, $\theta \in [0, 2 \pi)$ (this is always possible because of the compactness of $\bar{\Lambda}$),
\begin{align*}
\liminf_{n \to + \infty} | u_{\varepsilon_n}(x_n)| > 0,
\end{align*} 
and
\begin{align*}
\limsup_{n \to + \infty} \frac{\mathrm{d}(x_n,\mathcal{H}^\perp)}{\varepsilon_n} = + \infty.
\end{align*}
Let $g_{ref} \in G$ be the reflection with respect to $\mathcal{H}^{\perp}$. We know that $u_{\varepsilon_n} \circ g_{ref} = u_{\varepsilon_n}$, so that
\begin{align*}
\liminf_{n \to + \infty} | u_{\varepsilon_n} (g_{ref}(x_n)) | > 0.
\end{align*}
Moreover, by our assumption 
\begin{align*}
\lim_{n \to + \infty} \frac{\mathrm{d}_{cyl}(g_{ref}(x_n),\mathcal{H}^\perp)}{\varepsilon_n} = + \infty.
\end{align*}
Therefore, we infer that
\begin{align*}
\limsup_{n \to + \infty} \frac{\mathrm{d}_{cyl}(x_n , g_{ref}(x_n))}{\varepsilon_n} = + \infty,
\end{align*} 
We can now use Lemma \ref{lemma:inferior_bound_several_balls} to deduce that
\begin{align*}
\liminf_{n \to + \infty} \varepsilon_n^{-2} c_{\varepsilon_n} \geq  \left( \mathcal{M}(\bar{x}) + \mathcal{M}(g_{ref}(\bar{x})) \right) \geq 2 \inf_{\Lambda} \mathcal{M},
\end{align*}
whereas we know from \eqref{eq:upper_estimate_critical_value} in Proposition \ref{prop:upper_estimate} that
\begin{align*}
\liminf_{n \to + \infty} \varepsilon_n^{-2} c_{\varepsilon_n}\le  \inf_{\Lambda \cap \mathcal{H}^{\perp}} \mathcal{M}.
\end{align*}
This yields the inequality
\begin{align*}
2 \inf_{\Lambda} \mathcal{M} \leq \inf_{\Lambda \cap \mathcal{H}^{\perp}} \mathcal{M},
\end{align*}
which is impossible because of the property \eqref{eq:condition_sur_Lambda} of the set $\Lambda$.


\medbreak

\textbf{Proof of assertion} (ii). Arguing again by contradiction, assume that there exist sequences $(\varepsilon_n)_n \subset \mathbb{R}^+$ and $(x_n)_n \subset \bar{\Lambda}$ such that $\varepsilon_n \to 0$, 
\begin{align*}
\liminf_{n \to + \infty} | u_{\varepsilon_n}(x_n)| > 0,
\end{align*} 
and
\begin{align*}
\lim_{n \to + \infty} \mathrm{d}_{cyl}(x_n, \partial \Lambda)=0,
\end{align*}
that is $x_n \to \bar{x} = (\bar{\rho} \cos \theta, \bar{\rho} \sin \theta, \bar{x}_3) \in \partial \Lambda$, $\theta \in [0, 2 \pi)$. By assertion $(i)$, we also know that $\bar{x} \in \mathcal{H}^\perp$. From Lemma \ref{lemma:inferior_bound_several_balls}, we have
\begin{align*}
\liminf_{n \to + \infty} \varepsilon_n^{-2} c_{\varepsilon_n} \geq  \mathcal{M}(\bar{\rho},\bar{x}_3) \geq  \inf_{\partial \Lambda \cap \mathcal{H}^{\perp}} \mathcal{M},
\end{align*}
so that \eqref{eq:upper_estimate_critical_value} in Proposition \ref{prop:upper_estimate} implies
\begin{align*}
\inf_{\partial \Lambda \cap \mathcal{H}^\perp} \mathcal{M} \leq \inf_{\Lambda \cap \mathcal{H}^\perp} \mathcal{M},
\end{align*}
which is again a contradiction to \eqref{eq:condition_sur_Lambda}. 

\medbreak

\textbf{Proof of assertion} (iii). 
This is also an easy consequence of Proposition \ref{prop:upper_estimate} and Lemma \ref{lemma:inferior_bound_several_balls}. Indeed, using also $(i)$, we can still assume the existence of a sequence $(x_n)_n$ such that $x_n$ converges to some $\bar{x}= (\bar{\rho} \cos \theta, \bar{\rho} \sin \theta, 0) \in \bar\Lambda \cap \mathcal{H}^\perp$. Then combining 
Lemma \ref{lemma:inferior_bound_several_balls} and Proposition \ref{prop:upper_estimate}, we deduce that
\begin{align*}
 \mathcal{M}(\bar{x})\le \liminf_{n \to + \infty} \varepsilon_{n}^{-2} c_{\varepsilon_n} \le  \inf_{\Lambda \cap \mathcal{H}^{\perp}} \mathcal{M}.
\end{align*}
Assume $\bar{x} \in \partial\Lambda \cap \mathcal{H}^\perp$. Then, by \eqref{eq:condition_sur_Lambda} and the last inequality, we have 
$$ \inf_{\Lambda \cap \mathcal{H}^{\perp}} \mathcal{M} < \inf_{\partial \Lambda \cap \mathcal{H}^\perp} \mathcal{M} \leq \mathcal{M}(\bar{x})\le \inf_{\Lambda \cap \mathcal{H}^{\perp}} \mathcal{M},$$
which is a contradiction. Henceforth, we deduce that $\bar x\in \Lambda \cap \mathcal{H}^\perp$ and $\lim_{n \to + \infty} \mathcal{M}(x_n)=\mathcal{M}(\bar{x})= \inf_{\Lambda \cap \mathcal{H}^{\perp}} \mathcal{M}$.

\medbreak

\textbf{Proof of assertion} (iv). 
Assume by contradiction the existence of $\delta >0$ and a sequence $y_n \in \bar{\Lambda}$ such that
\begin{align*}
| u_{\varepsilon_n}(y_n)|> \delta,
\end{align*}
and
\begin{align*}
\lim_{n \to + \infty} \frac{\mathrm{d}_{cyl}(x_n,y_n)}{\varepsilon_n} = + \infty.
\end{align*}
Up to a subsequence, we know that $y_n \rightarrow \bar{y} \in \bar \Lambda \cap \mathcal{H}^\perp$, Then, using again Lemma \ref{lemma:inferior_bound_several_balls}, Proposition \ref{prop:upper_estimate} and \eqref{eq:condition_sur_Lambda}, we obtain
\begin{align*}
\inf_{\Lambda \cap \mathcal{H}^\perp} \mathcal{M} \geq \liminf_{n \to + \infty} \varepsilon_n^{-2} c_{\varepsilon_n} \geq \left( \mathcal{M}(\bar{x}) + \mathcal{M}(\bar{y}) \right) \geq 2 \inf_{\Lambda\cap \mathcal{H}^\perp} \mathcal{M},
\end{align*}
which is impossible.
\end{proof}

\section{Solutions of the initial problem} \label{section:solution-initial-problem}

All this section is inspired by \cite{B-DC-VS}, where they study concentration of solutions around $k$-spheres for Laplacian problems. 

\subsection{Linear inequation outside small balls}

\begin{lemma} \label{lemma:inequation_outside_small_balls}
Suppose that the assumptions of Theorem \ref{theorem:existence_solutions_penalized} are satisfied. Let $(u_{\varepsilon})_\varepsilon \subset \mathcal{X}_\varepsilon$ be the solutions found in Theorem \ref{theorem:existence_solutions_penalized}. Let $(x_\varepsilon)_\varepsilon \subset \bar{\Lambda}$, found in Proposition \ref{prop:uniform_convergence_outside_small_balls}, be such that
\begin{align*}
\liminf_{\varepsilon \to 0} | u_{\varepsilon}(x_\varepsilon)| > 0.
\end{align*}
Then, there exists $r_0 > 0$ such that for every $r > r_0$, there exists $\varepsilon_r > 0$ such that for every $\varepsilon < \varepsilon_r$,
\begin{align*}
- \varepsilon^2 \left( \Delta + H \right) |u_{\varepsilon}| + (1 - \mu) V |u_{\varepsilon}| \leq 0 \quad \text{ in } \mathbb{R}^3 \backslash B_{cyl}(x_\varepsilon,\varepsilon r). 
\end{align*}
\end{lemma}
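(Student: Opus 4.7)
My plan is to combine the magnetic Kato inequality with the structure of the penalized nonlinearity, splitting the complement of the small cylindrical ball into the part lying outside $\Lambda$ and the part lying inside $\Lambda \setminus B_{cyl}(x_\varepsilon,\varepsilon r)$. In both regions I must show that the nonlinear term is dominated by $\varepsilon^2 H(x) + \mu V(x)$ times $|u_\varepsilon|$.

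The starting point is the magnetic Kato inequality \eqref{eq:magnetic_kato_inequality} applied to $u_\varepsilon$. Using the equation \eqref{eq:penalized_problem} satisfied by $u_\varepsilon$, this gives
\begin{align*}
\varepsilon^2 \Delta |u_\varepsilon| \geq -\operatorname{Re}\bigl(\operatorname{sign}(u_\varepsilon)\,[-V u_\varepsilon + g_\varepsilon(x,|u_\varepsilon|^2) u_\varepsilon]\bigr) = V|u_\varepsilon| - g_\varepsilon(x,|u_\varepsilon|^2)|u_\varepsilon|,
\end{align*}
so that $-\varepsilon^2 \Delta |u_\varepsilon| + V|u_\varepsilon| \leq g_\varepsilon(x,|u_\varepsilon|^2)|u_\varepsilon|$ pointwise. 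The conclusion will follow as soon as I can establish, on $\mathbb{R}^3 \setminus B_{cyl}(x_\varepsilon, \varepsilon r)$, the pointwise bound
\begin{align*}
g_\varepsilon(x,|u_\varepsilon|^2) \leq \varepsilon^2 H(x) + \mu V(x).
\end{align*}

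On $\mathbb{R}^3 \setminus \Lambda$ this inequality is immediate from the very definition \eqref{eq:penalized_nonlinearity} of the penalization, which ensures that $g_\varepsilon(x,s) \leq \varepsilon^2 H(x) + \mu V(x)$ for all $s > 0$ when $x \notin \Lambda$. The main (and only substantive) obstacle is the region $\Lambda \setminus B_{cyl}(x_\varepsilon, \varepsilon r)$, where $g_\varepsilon$ reduces to the full nonlinearity $f(|u_\varepsilon|^2) = |u_\varepsilon|^{p-2}$. There I need smallness of $|u_\varepsilon|$, and this is precisely what Proposition \ref{prop:uniform_convergence_outside_small_balls}(iv) provides: for every $\delta > 0$ there exist $R_\delta$ and $\varepsilon_\delta$ such that $|u_\varepsilon(x)| < \delta$ on $\Lambda \setminus B_{cyl}(x_\varepsilon, \varepsilon R)$ whenever $R > R_\delta$ and $\varepsilon < \varepsilon_\delta$.

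Using assumption \eqref{eq:condition_sur_Lambda2}, I fix $\delta > 0$ small enough that $\delta^{p-2} \leq \mu \inf_{\bar\Lambda} V$. Setting $r_0 := R_\delta$, for every $r > r_0$ and every $\varepsilon$ sufficiently small (depending on $r$), I obtain $|u_\varepsilon(x)|^{p-2} \leq \mu V(x) \leq \mu V(x) + \varepsilon^2 H(x)$ on $\Lambda \setminus B_{cyl}(x_\varepsilon, \varepsilon r)$. Combining the two cases yields $g_\varepsilon(x,|u_\varepsilon|^2)|u_\varepsilon| \leq (\varepsilon^2 H(x) + \mu V(x))|u_\varepsilon|$ on $\mathbb{R}^3 \setminus B_{cyl}(x_\varepsilon, \varepsilon r)$, and substituting in the Kato estimate above gives exactly
\begin{align*}
-\varepsilon^2(\Delta + H)|u_\varepsilon| + (1-\mu)V|u_\varepsilon| \leq 0
\end{align*}
on that set, as desired.
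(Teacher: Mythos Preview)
Your proof is correct and follows essentially the same approach as the paper: apply the magnetic Kato inequality to the penalized equation, then split $\mathbb{R}^3\setminus B_{cyl}(x_\varepsilon,\varepsilon r)$ into $\mathbb{R}^3\setminus\Lambda$ (where the penalization directly gives $g_\varepsilon\le \varepsilon^2 H+\mu V$) and $\Lambda\setminus B_{cyl}(x_\varepsilon,\varepsilon r)$ (where Proposition~\ref{prop:uniform_convergence_outside_small_balls}(iv) together with $\inf_{\bar\Lambda}V>0$ forces $|u_\varepsilon|^{p-2}\le \mu V$). The only minor imprecision is your phrasing ``there exist $R_\delta$ and $\varepsilon_\delta$'', since the $\varepsilon$-threshold in (iv) depends on $R$; but you correct this in the next sentence by letting $\varepsilon$ depend on $r$, so the argument stands.
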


\begin{proof}
First, we have that
\begin{align*}
\mu V(x) \geq \delta > 0,
\end{align*}
for $x \in \Lambda$. By Proposition \ref{prop:uniform_convergence_outside_small_balls} (iv), there exists $r_0 > 0$ sufficiently large, such that, for every $r > r_0$ there exist $\varepsilon_r >0$ such that for every $\varepsilon < \varepsilon_r$,
\begin{align*}
|u_{\varepsilon}(x)|^{p-2} < \delta \quad \text{ in } \Lambda \backslash B_{cyl}(x_\varepsilon,\varepsilon r).
\end{align*}
Then, we use the Kato inequality \eqref{eq:kato_inequality} to obtain
\begin{align*}
- \varepsilon^2 \left( \Delta + H \right) |u_{\varepsilon}| + (1 - \mu) V |u_{\varepsilon}| \leq |u_{\varepsilon}|^{p-1} - \mu V |u_{\varepsilon}| - \varepsilon^2 H |u_{\varepsilon}| < 0 \quad \text{ in } \Lambda \backslash B_{cyl}(x_\varepsilon, \varepsilon r).
\end{align*}
Now, in $\mathbb{R}^3 \backslash \Lambda$, we use again the Kato inequality to obtain
\begin{align*}
- \varepsilon^2 \left( \Delta + H \right) |u_{\varepsilon}| + (1 - \mu) V |u_{\varepsilon}| \leq 0 \quad \text{ in } \mathbb{R}^3 \backslash \Lambda,
\end{align*}
by the definition of the nonlinearity $g_{\varepsilon}$ in $\mathbb{R}^3 \backslash \Lambda$. This concludes the proof.
\end{proof}

\subsection{Barrier functions}

Once we can construct functions $w_{\varepsilon}$ verifying the opposite inequation 
\begin{align*}
-\varepsilon^2 \left( \Delta + H \right) w_{\varepsilon} + (1-\mu) V w_{\varepsilon} \geq 0 \quad \text{ in } \mathbb{R}^3 \backslash B_{cyl}(x_\varepsilon, \varepsilon r)
\end{align*}
with some convenient boundary conditions on $\partial B_{cyl}(x_\varepsilon, \varepsilon r)$, Lemma \ref{lemma:inequation_outside_small_balls} suggests that we can use the comparison principle to obtain an upper bound on $|u_{\varepsilon}|$. Those functions $w_{\varepsilon}$ will be chosen in such a good way that the bound $|u_{\varepsilon}|\leq C w_{\varepsilon}$ imply that $|u_{\varepsilon}|^{p-2} \leq \mu V(x) + \varepsilon^2 H(x)$ for all $x \in \mathbb{R}^3 \backslash \Lambda$, so that we recover solutions of the initial problem \eqref{eq:initialproblem}. 

\medbreak

We now define more precisely the notion of barrier functions.

\begin{definition} \label{def:barrier_function}
Let $(x_\varepsilon)_\varepsilon \subset \mathbb{R}^3$ and $r >0$. We say that $(w_{\varepsilon})_\varepsilon \subset C^{1,\alpha}(\mathbb{R}^3 \backslash \{ 0 \} \backslash B_{cyl}(x_\varepsilon,\varepsilon r))$ is a family of barrier functions if there exists $\varepsilon_0 > 0$ such that, for every $\varepsilon < \varepsilon_0$, we have that
\begin{itemize}
\item[(i)] $w_{\varepsilon}$ satisfies the inequation
\begin{align*}
- \varepsilon^2 \left( \Delta + H \right) w_{\varepsilon} + (1 - \mu) V w_{\varepsilon} \geq 0 \quad \text{ in } \mathbb{R}^3 \backslash B_{cyl}(x_\varepsilon, \varepsilon r);
\end{align*}
\item[(ii)] $\nabla w_{\varepsilon} \in L^2(\mathbb{R}^3 \backslash B_{cyl}(x_\varepsilon,\varepsilon r))$;
\item[(iii)] $w_{\varepsilon} \geq 1$ on $\partial B_{cyl}(x_\varepsilon, \varepsilon r)$.
\end{itemize}
\end{definition}

\subsubsection{Construction of the comparison functions}

In this section, we recall how to construct some comparison functions in $\Lambda$ and in $\mathbb{R}^3 \backslash \Lambda$. Those comparison functions will be used to construct the barrier functions. We first begin by the construction in $\mathbb{R}^3 \backslash \Lambda$.

\begin{lemma} \label{lemma:Psi}
For every $\varepsilon > 0$, there exists $\Psi_{\varepsilon} \in C^{1,\alpha}_{\text{loc}}(\mathbb{R}^3 \backslash \{0\} \backslash \Lambda)$ such that
\begin{align*}
\left\{ \begin{aligned} - \varepsilon^2 ( \Delta + H) \Psi_{\varepsilon} + (1 - \mu) V \Psi_{\varepsilon} & = 0 \quad & \text{ in } \mathbb{R}^3 \backslash \Lambda, \\
\Psi_{\varepsilon} & = 1 \quad & \text{ on } \partial \Lambda,
\end{aligned} \right.
\end{align*}
and 
\begin{align*}
\int_{\mathbb{R}^3 \backslash \Lambda} | \nabla \Psi_{\varepsilon}|^2 + \frac{|\Psi_{\varepsilon}|^2}{|x|^2} < + \infty.
\end{align*}
We also have the following estimate for every $x \in \mathbb{R}^3 \setminus(\Lambda\cup \{0\})$ and $C > 0$
\begin{align*}
0 < \Psi_{\varepsilon}(x) \leq \frac{C}{1 + |x|}.
\end{align*}
\begin{itemize}
\item[(i)] If we assume in addition that ($V^\infty$) holds with $\alpha = 2$, then, for every $\nu > 1$ and for every $R > 1$, with $\bar{\Lambda} \subset
B(0,R)$, there exist $C>0$ and $\varepsilon_0 > 0$ such that, for every $\varepsilon < \varepsilon_0$ and for every $x \in \mathbb{R}^3 \backslash B(0,R)$,
\begin{align*}
0 < \Psi_{\varepsilon}(x) \leq \frac{C}{|x|^\nu};
\end{align*}
\item[(ii)] If we assume that ($V^\infty$) holds with $\alpha < 2$, then, for every $\nu >0$ and for every $R > 1$, with $\bar{\Lambda} \subset B(0,R)$, there exist $C> 0$ and $\varepsilon_0 > 0$ such that, for every $\varepsilon < \varepsilon_0$ and for every $x \in \mathbb{R}^3 \backslash B(0,R)$,
\begin{align*} 
0 < \Psi_{\varepsilon}(x) \leq C \exp\left( - \nu |x|^{\frac{2-\alpha}{2}} \right);
\end{align*}
\item[(iii)]If we assume that ($V^0$) holds with $\alpha = 2$, then, for every $\nu >0$ and for every $0 < r < 1$, with $B(0,r) \cap \bar{\Lambda} = \emptyset$, there exist $C > 0$ and $\varepsilon_0 > 0$ such that, for every $\varepsilon < \varepsilon_0$ and for every $x \in B(0,r)\setminus\{0\}$,
\begin{align*}
0 < \Psi_{\varepsilon}(x) \leq C |x|^\nu.
\end{align*}
\item[(iv)] If we assume that ($V^0$) holds with $\alpha > 2$, then, for every $\nu >0$ and for every $0 < r < 1$, with $B(0,r) \cap \bar{\Lambda} = \emptyset$, there exist $C > 0$ and $\varepsilon_0 >0$ such that, for every $\varepsilon < \varepsilon_0$ and for every $x \in B(0,r)\setminus\{0\}$,
\begin{align*}
0 < \Psi_{\varepsilon}(x) \leq C \exp \left( - \nu |x|^{\frac{2-\alpha}{2}} \right).
\end{align*}
\end{itemize}
\end{lemma}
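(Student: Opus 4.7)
The plan is to obtain $\Psi_\varepsilon$ by a direct variational construction and then to derive all the pointwise decay bounds through the comparison principle, Lemma~\ref{lemma:comparison}, applied to carefully chosen explicit supersolutions.

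First, I would build $\Psi_\varepsilon$ as the unique minimizer of the coercive quadratic functional
\[
Q_\varepsilon(u) = \int_{\mathbb{R}^3 \setminus \Lambda} \varepsilon^2\bigl(|\nabla u|^2 - H(x)\,u^2\bigr) + (1-\mu)\,V(x)\,u^2
\]
over the affine space of real functions with $\nabla u \in L^2$, $u/|x| \in L^2$, and trace equal to $1$ on $\partial\Lambda$. Coercivity is an immediate consequence of the scalar Hardy inequality combined with $4\kappa < 1$, and the Euler--Lagrange equation is precisely the PDE in the statement. Replacing $\Psi_\varepsilon$ by $|\Psi_\varepsilon|$ preserves both the boundary trace and the energy, so $\Psi_\varepsilon \geq 0$; the strong maximum principle upgrades this to $\Psi_\varepsilon > 0$, and standard elliptic theory yields the $C^{1,\alpha}_{\text{loc}}$ regularity away from the origin. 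For the default upper bound $\Psi_\varepsilon(x) \leq C/(1+|x|)$, I would compare $\Psi_\varepsilon$ with the capacitary-type potential of $\Lambda$, which in dimension three decays exactly like $1/|x|$ at infinity: outside a large ball $B(0,R)\supset\bar\Lambda$, a preliminary polynomial decay is obtained by testing the direct supersolution $w_+(x) = C\,|x|^{-p_+}$ with $p_+=(1+\sqrt{1-4\kappa})/2$ (which works since $p_+(1-p_+)=\kappa$ and $H(x)\leq\kappa/|x|^2$), and then improved to the sharp $1/|x|$ rate by exploiting the logarithmic weight of $H$ through a slowly varying correction of $|x|^{-1}$. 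A symmetric argument near the origin, combined with a Harnack-type estimate on the bounded annulus $\{r<|x|<R\}\setminus\bar\Lambda$, then gives the uniform bound.

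For the refined decay estimates (i)--(iv), the strategy is uniform: in each case I exhibit an explicit supersolution $w$ having the claimed decay, ensure $w \geq \Psi_\varepsilon$ on the relevant boundary by choosing the multiplicative constant large enough, and invoke Lemma~\ref{lemma:comparison}. For (i) and (iii), I would test the polynomial supersolutions $w(x) = C|x|^{-\nu}$ at infinity and $C|x|^{\nu}$ near the origin, using
\[
-\Delta(|x|^{-\nu}) = \nu(1-\nu)\,|x|^{-\nu-2}, \qquad -\Delta(|x|^{\nu}) = -\nu(\nu+1)\,|x|^{\nu-2}.
\]
Since $(V^\infty)$ and $(V^0)$ with $\alpha = 2$ provide $V \geq c_0/|x|^2$ in the respective regimes, the positive term $(1-\mu)Vw$ absorbs both $-\varepsilon^2\Delta w$ and $-\varepsilon^2 Hw$ for $\varepsilon$ small. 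For (ii) and (iv), I would use the WKB-type supersolution $w(x) = C\exp(-\nu|x|^{s})$ with $s = (2-\alpha)/2$; a direct computation gives
\[
-\frac{\Delta w}{w} = -\nu^2 s^2\,|x|^{2s-2} + \nu\,s(s+1)\,|x|^{s-2},
\]
and since $2s-2 = -\alpha$, the leading negative contribution is of order $\varepsilon^2\,|x|^{-\alpha}$, hence absorbed by $(1-\mu)V \geq (1-\mu)c_0|x|^{-\alpha}$ for $\varepsilon$ sufficiently small.

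The main technical obstacle will be the construction of a supersolution decaying exactly like $1/|x|$ for the default bound: neither $|x|^{-1}$ (harmonic but a \emph{subsolution} of $-\Delta - H$) nor the Hardy-critical $|x|^{-p_+}$ with $p_+<1$ matches the right decay rate, so one has to exploit the logarithmic factor built into $H$ through a carefully tuned ansatz. A secondary subtlety is to keep the dependence on $\varepsilon$ uniform throughout (i)--(iv), which forces the parameter $\nu$ in each supersolution to be fixed first, with the smallness threshold $\varepsilon_0$ depending on $\nu$.
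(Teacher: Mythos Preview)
The paper itself gives no proof of this lemma---it simply writes ``We refer to \cite{B-DC-VS} for the proof.'' Your plan is correct and is precisely the strategy of that reference: construct $\Psi_\varepsilon$ variationally via the coercive Hardy form, then obtain every pointwise bound by applying the comparison principle of Lemma~\ref{lemma:comparison} against explicit radial supersolutions; your candidate supersolutions for (i)--(iv) check out, and you have correctly isolated the only delicate point, namely that the default $1/(1+|x|)$ rate is not reached by the pure Hardy power $|x|^{-p_+}$ and requires a logarithmic perturbation of $|x|^{-1}$ exploiting the factor $((\log|x|)^2+1)^{-(1+\beta)/2}$ in $H$.
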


We refer to \cite{B-DC-VS} for the proof. 

Now, we construct a comparison function inside of $\Lambda$.
\begin{lemma} \label{lemma:Phi_in_Lambda}
Consider $r>0$. Let $(x_\varepsilon)_\varepsilon = (\rho_\varepsilon \cos \theta, \rho_\varepsilon \sin \theta, x_{3,\varepsilon})_\varepsilon \subset \Lambda$, $\theta \in [0, 2 \pi)$, and $R >0$ be such that $B_{cyl}(x_\varepsilon, R) \subset \Lambda$. We define
\begin{align*} 
\Phi_{\varepsilon}(x) = \cosh \left( \lambda \frac{R - \mathrm{d}_{cyl}(x,x_\varepsilon)}{\varepsilon} \right),
\end{align*}
where $\lambda > 0$ is chosen such that
\begin{align*}
\inf_{\bar{\Lambda}}V > \frac{\lambda^2}{(1-\mu)}.
\end{align*}
Then, there exists $\varepsilon_0 > 0$ such that, for every $\varepsilon < \varepsilon_0$,
\begin{align*}
- \varepsilon^2 \left( \Delta + H \right) \Phi_{\varepsilon} + (1 - \mu) V \Phi_{\varepsilon} \geq 0 \quad \text{ in } B_{cyl}(x_\varepsilon,R) \backslash B_{cyl}(x_\varepsilon,\varepsilon r).
\end{align*}
\end{lemma}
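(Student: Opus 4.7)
\textbf{Proof plan for Lemma \ref{lemma:Phi_in_Lambda}.} Since $\Phi_\varepsilon$ is cylindrically symmetric, I will work with the reduced Laplacian $\Delta u = u_{\rho\rho}+u_\rho/\rho+u_{x_3x_3}$ acting on functions of $(\rho,x_3)$. Writing $s(x)=\mathrm{d}_{cyl}(x,x_\varepsilon)=\bigl((\rho-\rho_\varepsilon)^2+(x_3-x_{3,\varepsilon})^2\bigr)^{1/2}$, a direct computation gives $|\nabla s|^2=1$ and
\begin{align*}
\Delta s \;=\; \frac{1}{s}+\frac{\rho-\rho_\varepsilon}{\rho\,s} \;=\; \frac{2\rho-\rho_\varepsilon}{\rho\,s}.
\end{align*}
Applying the chain rule to $\Phi_\varepsilon=\cosh\!\bigl(\lambda(R-s)/\varepsilon\bigr)$ then yields
\begin{align*}
-\varepsilon^2\Delta\Phi_\varepsilon \;=\; -\lambda^2\,\Phi_\varepsilon \;+\; \varepsilon\lambda\,\sinh\!\Bigl(\tfrac{\lambda(R-s)}{\varepsilon}\Bigr)\,\Delta s,
\end{align*}
so the quantity to control reads
\begin{align*}
Q_\varepsilon \;:=\; -\varepsilon^2(\Delta+H)\Phi_\varepsilon + (1-\mu)V\Phi_\varepsilon \;=\; \bigl[(1-\mu)V-\lambda^2-\varepsilon^2 H\bigr]\Phi_\varepsilon \;+\; \varepsilon\lambda\sinh\!\Bigl(\tfrac{\lambda(R-s)}{\varepsilon}\Bigr)\Delta s.
\end{align*}

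In the annulus $s\in[\varepsilon r,R]$ we have $\sinh(\lambda(R-s)/\varepsilon)\ge 0$, and I will split according to the sign of $\Delta s$, i.e.\ according to whether $2\rho\ge\rho_\varepsilon$ or not. Since $B_{cyl}(x_\varepsilon,R)\subset\Lambda\subset\bar\Lambda$ and $\bar\Lambda\cap\mathcal{H}=\emptyset$, there is a constant $\rho_{\min}>0$ (depending only on $\Lambda$) with $\rho\ge\rho_{\min}$ throughout the ball; moreover $H$ is bounded on $\bar\Lambda$ and $\inf_{\bar\Lambda}V-\lambda^2/(1-\mu)>0$ by the choice of $\lambda$.

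\emph{Case A:} $2\rho\ge\rho_\varepsilon$. Then $\Delta s\ge0$, so the second term in $Q_\varepsilon$ is nonnegative. The first term is $\ge\bigl[(1-\mu)\inf_{\bar\Lambda}V-\lambda^2-\varepsilon^2\sup_{\bar\Lambda}H\bigr]\Phi_\varepsilon$, which is strictly positive for all $\varepsilon$ sufficiently small. Hence $Q_\varepsilon\ge0$.

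\emph{Case B:} $2\rho<\rho_\varepsilon$. Here $s\ge\rho_\varepsilon-\rho>\rho_\varepsilon/2\ge\rho_{\min}/2$, so $s$ is uniformly bounded below away from $0$. Using $|2\rho-\rho_\varepsilon|\le\rho_\varepsilon$, this gives the uniform bound $|\Delta s|\le\rho_\varepsilon/(\rho\cdot\rho_\varepsilon/2)\le 2/\rho_{\min}=:C$. Combining with $\sinh(\lambda(R-s)/\varepsilon)\le\cosh(\lambda(R-s)/\varepsilon)=\Phi_\varepsilon$, I obtain
\begin{align*}
Q_\varepsilon \;\ge\; \Phi_\varepsilon\Bigl[(1-\mu)V-\lambda^2-\varepsilon^2 H - C\lambda\varepsilon\Bigr],
\end{align*}
and the bracket is again positive for $\varepsilon$ small since $(1-\mu)V-\lambda^2$ is bounded below by a positive constant on $\bar\Lambda$. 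The main subtlety, and the only non-routine point, is precisely this second case: one must observe that the possibly negative contribution of $\Delta s$ only arises where $s$ is already bounded away from $0$, so the factor $\varepsilon$ is enough to absorb it, no matter how small $r$ is chosen.
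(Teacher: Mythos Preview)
Your proof is correct and follows the same direct computation as the paper: both arrive at the identity
\[
-\varepsilon^2(\Delta+H)\Phi_\varepsilon+(1-\mu)V\Phi_\varepsilon
=\bigl[(1-\mu)V-\lambda^2\bigr]\Phi_\varepsilon-\varepsilon^2H\Phi_\varepsilon+\varepsilon\lambda\,\frac{2\rho-\rho_\varepsilon}{\rho\,s}\,\sinh\!\Bigl(\tfrac{\lambda(R-s)}{\varepsilon}\Bigr).
\]
The paper simply asserts nonnegativity ``thanks to the assumption on $\lambda$ and for $\varepsilon$ small enough,'' whereas your Case~A/Case~B split makes explicit how the possibly negative $\sinh$ contribution (when $2\rho<\rho_\varepsilon$) is absorbed; this is a genuine improvement in rigor over the paper's terse argument, not a different approach.
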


\begin{proof}
By simple calculation, we obtain that
\begin{align*}
& - \varepsilon^2 \left( \Delta + H \right) \Phi_{\varepsilon} + ( 1 - \mu) V \Phi_{\varepsilon} =  \\
& ( \lambda^2 + (1-\mu)V ) \, \Phi_{\varepsilon} - \varepsilon^2 H \Phi_{\varepsilon} + \varepsilon \lambda \frac{2\rho - \rho_\varepsilon}{\rho \, \mathrm{d}_{cyl}(x,x_\varepsilon)}\sinh \left(\lambda \frac{R - \mathrm{d}_{cyl}(x,x_\varepsilon)}{\varepsilon}\right) \geq 0, 
\end{align*}
thanks to the assumption on $\lambda$ and for $\varepsilon$ small enough.
\end{proof}

Thanks to Proposition \ref{prop:uniform_convergence_outside_small_balls} we remark that the assumption $B_{cyl}(x_\varepsilon, R) \subset \Lambda$ is verified if  $\varepsilon$ is taken sufficiently small. From now, we will always consider that $\varepsilon_0 \in \mathbb{R}$ is taken small enough to have this property.

With those two functions $\Psi_{\varepsilon}$ and $\Phi_{\varepsilon}$, we are ready to construct the barrier functions. Again we refer to \cite{B-DC-VS} for the proof. 

\begin{lemma} \label{lemma:barrier_function}
Take $r>r_0$ ($r_0$ introduced in Lemma \ref{lemma:inequation_outside_small_balls}). Let $\lambda > 0$ be as in Lemma \ref{lemma:Phi_in_Lambda} and $(x_\varepsilon)_\varepsilon$ be as in Proposition \ref{prop:uniform_convergence_outside_small_balls}.
Then, there exists $\varepsilon_0 >0$ and a family $(w_{\varepsilon})_\varepsilon \subset C^{1,\alpha}_{\text{loc}}(\mathbb{R}^3 \backslash \{0\} \backslash B_{cyl}(x_\varepsilon, \varepsilon r) )$ of barrier functions such that for $\varepsilon < \varepsilon_0$
\begin{align*}
0 < w_{\varepsilon}(x) \leq C \exp \left( - \frac{\lambda}{\varepsilon} \frac{\mathrm{d}_{cyl}(x,x_\varepsilon)}{1 + \mathrm{d}_{cyl}(x,x_\varepsilon)} \right) ( 1 + |x|)^{-1} \quad \forall x \in \mathbb{R}^3 \setminus(B_{cyl}(x_\varepsilon, \varepsilon r)\cup \{0\}).
\end{align*}
Moreover, if we assume that
\begin{itemize}
\item[(i)] ($V^\infty$) holds with $\alpha = 2$, then, for every $\nu > 1$ and for every $R >1$ with $\bar{\Lambda} \subset B(0,R)$, there exist $C>0$ and $\varepsilon_0$ (eventually smaller than the previous one) such that, for all $\varepsilon < \varepsilon_0$,
\begin{align} \label{eq:bound_barrier_function_V_2_infty}
0 < w_{\varepsilon}(x) \leq  C \exp \left( - \frac{\lambda}{\varepsilon} \frac{\mathrm{d}_{cyl}(x,x_\varepsilon)}{1 + \mathrm{d}_{cyl}(x,x_\varepsilon)} \right) |x|^{-\nu} \quad \forall x \in \mathbb{R}^3 \backslash B(0,R);
\end{align}
\item[(ii)] ($V^\infty$) holds with $\alpha < 2$, then, for every $\nu > 1$ and for every $R >1$ with $\bar{\Lambda} \subset B(0,R)$, there exist $C > 0$ and $\varepsilon_0 > 0$ such that, for all $\varepsilon < \varepsilon_0$,
\begin{align} \label{eq:bound_barrier_function_V_3_infty}
0 < w_{\varepsilon}(x) \leq  C \exp \left( - \frac{\lambda}{\varepsilon} \frac{\mathrm{d}_{cyl}(x,x_\varepsilon)}{1 + \mathrm{d}_{cyl}(x,x_\varepsilon)} \right) \exp \left( - \nu |x|^{\frac{2-\alpha}{2}} \right) \quad \forall x \in \mathbb{R}^3 \backslash B(0,R);
\end{align}
\item[(iii)] ($V^0$) holds with $\alpha = 2$, then, for every $\nu > 1$ and for every $r <1$ with $B(0,r) \cap \bar{\Lambda} = \emptyset$, there exist $C>0$ and $\varepsilon_0 > 0$ such that, for all $\varepsilon < \varepsilon_0$,
\begin{align} \label{eq:bound_barrier_function_V_2_0}
0 < w_{\varepsilon}(x) \leq  C \exp \left( - \frac{\lambda}{\varepsilon} \frac{\mathrm{d}_{cyl}(x,x_\varepsilon)}{1 + \mathrm{d}_{cyl}(x,x_\varepsilon)} \right) |x|^\nu \quad \forall x \in B(0,r)\setminus\{0\};
\end{align}
\item[(iv)] ($V^0$) holds with $\alpha > 2$, then, for every $\nu > 1$ and for every $r >1$ with $B(0,r) \cap \bar{\Lambda} = \emptyset$, there exist $C >0$ and $\varepsilon_0 > 0$ such that, for all $\varepsilon < \varepsilon_0$,
\begin{align} \label{eq:bound_barrier_function_V_3_0}
0 < w_{\varepsilon}(x) \leq  C \exp \left( - \frac{\lambda}{\varepsilon} \frac{\mathrm{d}_{cyl}(x,x_\varepsilon)}{1 + \mathrm{d}_{cyl}(x,x_\varepsilon)} \right) \exp \left( - \nu |x|^{\frac{2-\alpha}{2}} \right) \quad \forall x \in B(0,r)\setminus\{0\}.
\end{align}
\end{itemize}
\end{lemma}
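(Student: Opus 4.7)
Plan: The strategy is to build $w_\varepsilon$ as an amalgamation of the two comparison functions already constructed, namely $\Phi_\varepsilon$ from Lemma \ref{lemma:Phi_in_Lambda} (exponential decay away from $x_\varepsilon$ inside $\Lambda$) and $\Psi_\varepsilon$ from Lemma \ref{lemma:Psi} (decay outside $\Lambda$ with the good behavior at infinity and at the origin). Since the sequence $(x_\varepsilon)_\varepsilon$ given by Proposition \ref{prop:uniform_convergence_outside_small_balls}(ii) satisfies $\liminf_{\varepsilon\to 0}\mathrm{d}_{cyl}(x_\varepsilon,\partial\Lambda) > 0$, we may fix $R > 0$ with $B_{cyl}(x_\varepsilon, R) \subset \Lambda$ for every $\varepsilon < \varepsilon_0$. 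In the annulus $B_{cyl}(x_\varepsilon, R) \setminus B_{cyl}(x_\varepsilon, \varepsilon r)$, I would use the normalized interior barrier
\begin{equation*}
\tilde{\Phi}_\varepsilon(x) := \frac{\cosh\!\bigl(\lambda(R-\mathrm{d}_{cyl}(x,x_\varepsilon))/\varepsilon\bigr)}{\cosh\!\bigl(\lambda(R-\varepsilon r)/\varepsilon\bigr)},
\end{equation*}
which by Lemma \ref{lemma:Phi_in_Lambda} solves the correct differential inequality on that annulus, equals $1$ on $\partial B_{cyl}(x_\varepsilon,\varepsilon r)$, and is of order $e^{-\lambda R/\varepsilon}$ on $\partial B_{cyl}(x_\varepsilon,R)$.

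Next, I define the candidate barrier globally by
\begin{equation*}
w_\varepsilon(x) = \begin{cases}
\tilde{\Phi}_\varepsilon(x) + C_1 \Psi_\varepsilon(x), & x \in \Lambda \setminus B_{cyl}(x_\varepsilon,\varepsilon r), \\
C_2 \Psi_\varepsilon(x), & x \in \mathbb{R}^3 \setminus \Lambda,
\end{cases}
\end{equation*}
(with $\tilde{\Phi}_\varepsilon$ understood as extended by its value on $\partial B_{cyl}(x_\varepsilon,R)$ outside that ball, or cut off smoothly) and pick $C_1, C_2>0$ so that the two pieces match across $\partial\Lambda$ with the right jump in normal derivative. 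The differential inequality (i) of Definition \ref{def:barrier_function} then holds on each piece separately: for $\tilde{\Phi}_\varepsilon$ by Lemma \ref{lemma:Phi_in_Lambda} and for $\Psi_\varepsilon$ trivially since it satisfies the corresponding equality by Lemma \ref{lemma:Psi}. Across $\partial \Lambda$, the inequality is preserved in the distributional sense because the sum/minimum of two super-solutions of a linear elliptic operator is a super-solution provided the normal derivative does not jump upward; this is the standard gluing argument underlying the comparison principle Lemma \ref{lemma:comparison}. Condition (iii) is guaranteed by the value of $\tilde{\Phi}_\varepsilon$ on $\partial B_{cyl}(x_\varepsilon,\varepsilon r)$, while (ii) follows from $\nabla \Psi_\varepsilon \in L^2$ given by Lemma \ref{lemma:Psi} together with the obvious regularity of $\tilde\Phi_\varepsilon$.

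The pointwise estimates are then read off by bounding each piece. Inside $B_{cyl}(x_\varepsilon, R)$ one has $\tilde{\Phi}_\varepsilon(x) \le C \exp(-\lambda\mathrm{d}_{cyl}(x,x_\varepsilon)/\varepsilon)$, which already implies a bound of the form $C\exp(-\tfrac{\lambda}{\varepsilon}\tfrac{\mathrm{d}_{cyl}(x,x_\varepsilon)}{1+\mathrm{d}_{cyl}(x,x_\varepsilon)})$; outside $B_{cyl}(x_\varepsilon,R)$ the ratio $\mathrm{d}_{cyl}/(1+\mathrm{d}_{cyl})$ is bounded away from zero, so the corresponding factor is bounded by a fixed constant and it suffices to dominate $w_\varepsilon$ by a constant times $\Psi_\varepsilon(x) \lesssim (1+|x|)^{-1}$. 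The four refined bounds under $(V^\infty)$ or $(V^0)$ are immediate consequences of substituting the matching refined estimates on $\Psi_\varepsilon$ from Lemma \ref{lemma:Psi}(i)--(iv). The main delicate point is the pasting across $\partial\Lambda$, where one must ensure that $w_\varepsilon$ is indeed a super-solution globally despite only being Lipschitz; this is the reason for borrowing the detailed construction from \cite{B-DC-VS}, whose argument carries over verbatim once the cylindrical distance $\mathrm{d}_{cyl}$ replaces the Euclidean distance and Lemma \ref{lemma:Phi_in_Lambda} plays the role of the corresponding radial interior barrier.
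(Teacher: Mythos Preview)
Your proposal is correct and follows essentially the same approach as the paper: the paper explicitly states that the barrier is built from the two comparison functions $\Phi_\varepsilon$ and $\Psi_\varepsilon$ of Lemmas \ref{lemma:Phi_in_Lambda} and \ref{lemma:Psi}, and then refers to \cite{B-DC-VS} for the detailed gluing argument, which is precisely what you outline. One small wrinkle in your write-up is that $\Psi_\varepsilon$ is only defined on $\mathbb{R}^3\setminus\Lambda$, so the term $C_1\Psi_\varepsilon$ inside $\Lambda$ should really be read as the constant $C_1$ (since $\Psi_\varepsilon=1$ on $\partial\Lambda$), but this is exactly the sort of cosmetic detail that the reference to \cite{B-DC-VS} is meant to absorb.
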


\subsection{Back to the original equation}

Thanks to Lemmas \ref{lemma:inequation_outside_small_balls} and \ref{lemma:barrier_function}, we obtain an upper bound on $|u_{\varepsilon}|$.

\begin{proposition} \label{prop:bound_on_u}
Suppose the assumptions of Theorem \ref{theorem:existence_solutions_penalized} and Proposition \ref{prop:uniform_convergence_outside_small_balls} are satisfied. Let $\lambda > 0$ be as in Lemma \ref{lemma:Phi_in_Lambda}, $(x_\varepsilon)_\varepsilon \subset \bar{\Lambda}$ be as in Proposition \ref{prop:uniform_convergence_outside_small_balls} and $(u_{\varepsilon})_\varepsilon \subset \mathcal{X}_\varepsilon$ be the solutions found in Theorem \ref{theorem:existence_solutions_penalized}. Then, there exists $C > 0$ and $\varepsilon_0 > 0$ such that, for all $\varepsilon < \varepsilon_0$,
\begin{align} \label{eq:estimate_modulus_solutions}
0 < |u_{\varepsilon}(x)| \leq C \exp \left( - \frac{\lambda}{\varepsilon} \frac{\mathrm{d}_{cyl}(x,x_\varepsilon)}{1 + \mathrm{d}_{cyl}(x,x_\varepsilon)} \right) (1 + |x|)^{-1} \quad \forall x \in \mathbb{R}^3\setminus\{0\}.
\end{align}
Moreover, \eqref{eq:bound_barrier_function_V_2_infty}-\eqref{eq:bound_barrier_function_V_3_0} hold for $|u_{\varepsilon}|$ in place of $w_{\varepsilon}$ if we make the same assumptions on $V$.
\end{proposition}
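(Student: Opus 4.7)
The plan is to compare $|u_\varepsilon|$ with a constant multiple of the barrier function $w_\varepsilon$ provided by Lemma~\ref{lemma:barrier_function}, using the comparison principle (Lemma~\ref{lemma:comparison}) in the exterior region $\mathbb{R}^3\setminus (B_{cyl}(x_\varepsilon,\varepsilon r)\cup\{0\})$. Lemma~\ref{lemma:inequation_outside_small_balls} shows that $|u_\varepsilon|$ is a subsolution of $-\varepsilon^2(\Delta + H) + (1-\mu)V$ in that region, while Definition~\ref{def:barrier_function}(i) says $w_\varepsilon$ is a supersolution of the same operator. The only thing missing to directly invoke Lemma~\ref{lemma:comparison} is the boundary inequality on $\partial B_{cyl}(x_\varepsilon,\varepsilon r)$ and the integrability of the negative part of the difference at infinity and near the origin.

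First I would establish a uniform $L^\infty$ bound for $|u_\varepsilon|$ on $\partial B_{cyl}(x_\varepsilon,\varepsilon r)$. By the rescaling argument in Lemma~\ref{lemma:convergence_rescaled_solutions} and elliptic regularity (the bootstrap producing the $W^{2,q}_{\mathrm{loc}}$ bound in Step~1 of its proof), the rescaled sequence $v_{\varepsilon_n}$ is bounded in $C^{1,\alpha}(\overline{B(0,r)})$, which translates into $\|u_\varepsilon\|_{L^\infty(\overline{B_{cyl}(x_\varepsilon,\varepsilon r)})}\le M$ for some $M>0$ independent of $\varepsilon$ small. Combined with Definition~\ref{def:barrier_function}(iii), choosing $C=M$ gives $|u_\varepsilon|\le C w_\varepsilon$ on $\partial B_{cyl}(x_\varepsilon,\varepsilon r)$.

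Next I would verify the integrability hypotheses of Lemma~\ref{lemma:comparison} applied to $v=|u_\varepsilon|$ and $w=Cw_\varepsilon$ on the unbounded domain $\Omega=\mathbb{R}^3\setminus(\overline{B_{cyl}(x_\varepsilon,\varepsilon r)}\cup\{0\})$. The diamagnetic inequality and the magnetic Hardy inequality \eqref{eq:Hardy_inequality_magnetic} give $\nabla |u_\varepsilon|\in L^2(\Omega)$ and $|u_\varepsilon|/|x|\in L^2(\Omega)$; Definition~\ref{def:barrier_function}(ii) together with the pointwise decay of $w_\varepsilon$ gives the same for $w_\varepsilon$, hence for $(Cw_\varepsilon-|u_\varepsilon|)_-$. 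Lemma~\ref{lemma:comparison} therefore yields $|u_\varepsilon(x)|\le Cw_\varepsilon(x)$ for every $x\in\Omega$, and Lemma~\ref{lemma:barrier_function} converts this into the pointwise estimate \eqref{eq:estimate_modulus_solutions} on $\Omega$. For $x\in B_{cyl}(x_\varepsilon,\varepsilon r)\setminus\{0\}$ the estimate is trivial since the exponential factor is bounded below by a positive constant there and $|u_\varepsilon|\le M$. Repeating the same comparison scheme with the refined barriers from \eqref{eq:bound_barrier_function_V_2_infty}--\eqref{eq:bound_barrier_function_V_3_0} (available under $(V^\infty)$ or $(V^0)$) yields the asserted improved bounds on $|u_\varepsilon|$.

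It remains to prove strict positivity, i.e.\ $|u_\varepsilon(x)|>0$ for every $x\neq 0$. I would invoke the magnetic Kato inequality \eqref{eq:magnetic_kato_inequality}, which together with equation \eqref{eq:penalized_problem} gives $-\varepsilon^2\Delta|u_\varepsilon|+V(x)|u_\varepsilon|\le g_\varepsilon(x,|u_\varepsilon|^2)|u_\varepsilon|$, so that $-\varepsilon^2\Delta|u_\varepsilon|+c_\varepsilon(x)|u_\varepsilon|\le 0$ for a locally bounded potential $c_\varepsilon$ on $\mathbb{R}^3\setminus\{0\}$. Since $|u_\varepsilon|\not\equiv 0$ (Proposition~\ref{prop:no_uniform_convergence_in_lambda}), the strong maximum principle applied on any bounded subdomain of $\mathbb{R}^3\setminus\{0\}$ forbids $|u_\varepsilon|$ from attaining an interior zero, giving $|u_\varepsilon|>0$ on $\mathbb{R}^3\setminus\{0\}$. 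The main technical obstacle in the above plan is the uniform $L^\infty$ boundary bound on $\partial B_{cyl}(x_\varepsilon,\varepsilon r)$, for which one must carefully exploit the already-established $C^{1,\alpha}_{\mathrm{loc}}$ convergence of the rescaled problems and the fact that $x_\varepsilon$ stays bounded away from $\partial\Lambda$ by Proposition~\ref{prop:uniform_convergence_outside_small_balls}(ii).
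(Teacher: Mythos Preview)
Your proposal is correct and follows essentially the same route as the paper: use Lemma~\ref{lemma:inequation_outside_small_balls} for the subsolution property, Lemma~\ref{lemma:convergence_rescaled_solutions} for the uniform $L^\infty$ bound on $\overline{B_{cyl}(x_\varepsilon,\varepsilon r)}$, and then apply the comparison principle against the barrier $w_\varepsilon$ from Lemma~\ref{lemma:barrier_function}. You are in fact more explicit than the paper on two points it glosses over, namely the verification of the integrability hypotheses of Lemma~\ref{lemma:comparison} and the strict positivity of $|u_\varepsilon|$ via the strong maximum principle.
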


\begin{proof}
By Lemma \ref{lemma:inequation_outside_small_balls}, we know that $|u_{\varepsilon}|$ is a subsolution in $\mathbb{R}^3 \backslash B_{cyl}(x_\varepsilon, \varepsilon r)$, for some $r > r_0$. Furthermore, thanks to Lemma \ref{lemma:convergence_rescaled_solutions}, we know that $\| u_{\varepsilon} \|_{L^{\infty}(B_{cyl}(x_\varepsilon,\varepsilon r))}$ is bounded for $\varepsilon < \varepsilon_0$. We deduce that $|u_{\varepsilon}| \leq \| u_{\varepsilon} \|_{L^{\infty}(B_{cyl}(x_\varepsilon,\varepsilon r))}$ on $\partial B_{cyl}(x_\varepsilon, \varepsilon r)$. With the comparison principle, we conclude that
\begin{align*}
0 < |u_{\varepsilon}(x) | \leq \| u_{\varepsilon} \|_{L^{\infty}(B_{cyl}(x_\varepsilon, \varepsilon r))} w_{\varepsilon}(x) \quad \forall x \in \mathbb{R}^3  \setminus(B_{cyl}(x_\varepsilon, \varepsilon r)\cup \{0\}).
\end{align*}
Finally, since $|u_{\varepsilon}|$ is bounded in $B_{cyl}(x_\varepsilon, \varepsilon r)$, we obtain the estimate \eqref{eq:estimate_modulus_solutions} for all $x \in \mathbb{R}^3$. The other estimates follow by reasoning in the same way.
\end{proof}

We can now proof the main Theorem.

\begin{proof}[Proof of Theorem \ref{theorem:main}.]
It remains us to prove that $u_{\varepsilon}$ is in fact a solution of the initial problem \eqref{eq:initialproblem}. For this, we need to show that
\begin{align*}
f(|u_{\varepsilon}|^2) = |u_{\varepsilon}|^{p-2} \leq \varepsilon^2 H(x) + \mu V(x) \quad   \forall x \in \mathbb{R}^3 \backslash \Lambda.
\end{align*}
We prove this for example in the case where we make no assumptions on $V$ (then $p> 4$). We use Proposition \ref{prop:bound_on_u} to say that
\begin{align*}
|u_{\varepsilon}(x)|^{p-2} \leq C e^{-\frac{\lambda}{\varepsilon}(p-2)} (1 + |x|)^{-(p-2)} \leq \varepsilon^2 H(x) + \mu V(x),
\end{align*}
for small $\varepsilon$. The last inequality is verified since we considered $p>4$. Indeed, for $|x|$ large, the right-hand side behaves as $1/\left( |x|^2 \log |x| \right)$. The left-hand side decays then faster since it behaves as $1/|x|^{p-2}$. For $|x|$ small, the left-hand side behaves as a constant while the right-hand side is unbounded. The other cases may be treated in a similar way. 
\end{proof}

\begin{remark}
In addition of theorem \ref{theorem:main}, we may also prove that estimates \eqref{eq:bound_barrier_function_V_2_infty}-\eqref{eq:bound_barrier_function_V_3_0} hold for $|u_{\varepsilon}|$ instead of $w_{\varepsilon}$ if we make the corresponding assumptions on $V$. 
\end{remark}

\section{Another class of symmetric solutions} \label{section:another-class-symmetric-solutions}

When $A$ is equal to the Lorentz potential, i.e. $A= (-x_2,x_1,0)$ or has the slightly more general form 
\begin{align}\label{AtypeLo}
A(\rho, \theta, x_3) = c(\rho)(-\sin \theta, \cos \theta,0),
\end{align}
Esteban and Lions have proposed in \cite[Section 4.3]{EsLi} the class of solutions
\begin{align*}
u_k := C_k\left(\frac{x_2+ix_1}{\rho}\right)^kv_k,
\end{align*} 
where $k\in\Z$, $C_{k}\in\R\setminus\{0\}$ and $v_k$ are real cylindrically symmetric solutions of an auxiliary problem. One can check easily that the functions $u_k$ solve 
\begin{equation} \label{eq:original}
	\left(i \varepsilon \nabla + A \right)^2 u_k + V(\rho,x_3) u_k = |u_k|^{p-2} u_k, \qquad x\in \mathbb{R}^{3},
\end{equation}
if and only if the $v_k$ are real solutions of 
\begin{equation} \label{eq:kvortices}
	- \varepsilon ^2 \Delta v_k + \left(\left(\frac{k\varepsilon}{\rho} + c(\rho)\right)^2 + V(\rho,x_3) \right) v_k = C_{k}^{p-2}|v_k|^{p-2} v_k, \qquad x\in \mathbb{R}^{3}.
\end{equation}
The limit equation in $\mathbb{R}^2$ has the form
\begin{align}
- \Delta w_k + \left( c^2(\rho_0) + V(\rho_0, x_{3,0}) \right) w_k = C_k^{p-2} |w_k|^{p-2} w_{k},
\end{align}
where $(\rho_0, x_{3,0})$ is such that the normalized concentration function
\begin{align}\label{newM}
\mathcal{M}(\rho, x_3) = \rho \left( c^2(\rho) + V(\rho, x_3) \right)^{\frac{2}{p-2}}
\end{align}
is locally minimized at this point. 

Observe that this reduction to a real valued problem allows us to use directly the arguments from \cite{B-DC-VS} without much modifications. One can then consider several cases according to the properties of $c$ and $V$. We do not address all these cases in details. We will focus on the special case which for instance allows to consider a critical frequency. 

\subsection{Existence at the critical frequency}

Remember that the potential $V$ stands for $U-E$, where $U$ is the electrical potential and $E$ is the frequency of the standing wave $\psi(x,t)= e^{-i \frac{E}{\hbar} t}  \, u(x)$. When $E=\inf_{\R^{N}}U(x)$, we say that $E$ is the critical frequency. When $A=0$, the critical frequency was studied by many authors, starting with the contribution of Byeon and Wang \cite{ByeonWang2002,ByeonWang2003} and followed by many others.

Byeon and Wang have shown that there exists a standing wave which is trapped in a neighbourhood of the isolated minimum points of $V$ and whose amplitude goes to $0$ as $\hbar\to 0$. Moreover, depending upon the local behaviour of the potential function $V$ near the minimum points, the limiting profile of the standing-wave solutions was shown to exhibit quite different characteristic features. This is in striking contrast with the non-critical frequency case ($\inf U(x)>E$) where the solution develops a spike in the semiclassical limit. 

Here we show that even if the frequency is critical, the presence of an external magnetic field allows for the existence of a solution concentrating on a circle and whose amplitude does not vanish in the semiclassical limit so that this solution is a spike type solution.

\medbreak
%

Let $p>2$ and $k\in\Z$. Let $V \in C(\mathbb{R}^3\backslash \{0\})$ be nonnegative and such that $V (g x) = V(x)$ for every $g \in G$. Assume $A \in C^{1}(\mathbb{R}^3, \mathbb{R}^3)$ is of the form \eqref{AtypeLo} and such that $c(\rho)>0$ for every $\rho$ and
$$\liminf_{\rho\to\infty}c(\rho)\rho>0.$$

With those assumptions, the assumption $\liminf_{|x| \to + \infty} W(x) |x|^2 > 0$ holds for the potential 
\begin{align*}
W = \left(\frac{k\varepsilon}{\rho} + c(\rho)\right)^2 + V(\rho,x_3)
\end{align*}
and $\varepsilon$ small. Moreover this potential is nonnegative everywhere and
for every $0<\theta < 1$ and $\varepsilon>0$, there exists $\rho_{\theta,\varepsilon}>0$ such that 
$$\left(\frac{k\varepsilon}{\rho} + c(\rho)\right)^2 + V(\rho,x_3) \ge \theta c(\rho)^2 + V(\rho,x_3),$$
for $\rho\ge \rho_{\theta,\varepsilon}$. Clearly $\rho_{\theta,\varepsilon}\to 0$ as $\varepsilon\to 0$ for any fixed $\theta$.

The proof of the following theorem can be easily recovered from \cite{B-DC-VS} with straightforward modifications. 

\begin{theorem}
With the above conditions on $c$, $V$, $k$ and $p$, assume there exists a bounded $G$-invariant smooth set $\Lambda \subset \mathbb{R}^3$ such that \eqref{eq:condition_sur_Lambda} is satisfied with $\mathcal M$ now defined by \eqref{newM} and \eqref{eq:condition_sur_Lambda2} is satisfied for $\theta c^2 + V$, $\theta \in (0,1)$. 
If $\varepsilon>0$ is small enough, the equation \eqref{eq:kvortices} has a solution $v_{k,\varepsilon}$ such that $v_{k,\varepsilon}(gx) = v_{k,\varepsilon}(x)$ for all $g\in G$, 
 $v_{k,\varepsilon}$ attains its maximum at some $x_{k,\varepsilon}=(\rho_{k,\varepsilon} \cos \theta, \rho_{k,\varepsilon} \sin \theta, x_{3,k,\varepsilon}) \in \Lambda$ such that 
\begin{itemize}
\item[(ii)]
$
\displaystyle \liminf_{\varepsilon \to 0} |v_{k,\varepsilon}(x_{k,\varepsilon})| > 0 ;
$
\item[(iii)]$ \displaystyle \lim_{\varepsilon \to 0} \mathcal{M}(x_{k,\varepsilon}) = \inf_{\Lambda \cap \mathcal{H}^\perp} \mathcal{M}$;
\item[(iv)] $\displaystyle \limsup_{\varepsilon \to 0} \frac{\mathrm{d}_{cyl}(x_{k,\varepsilon}, \mathcal{H}^\perp)}{\varepsilon} < + \infty$  , that is $x_{3,k,\varepsilon} \to 0$; \\
\item[(v)] $ \displaystyle \liminf_{\varepsilon \to 0} \mathrm{d}_{cyl}(x_{k,\varepsilon}, \partial \Lambda)> 0$.
\end{itemize}
Finally, there exists $C_{k}\in\R\setminus \{0\}$ such that 
$$u_{k,\varepsilon} = C_k\left(\frac{x_2+ix_1}{\rho}\right)^kv_{k,\varepsilon}$$
solves \eqref{eq:original} and
for every $\nu > 1$, the asymptotic estimate
$$
\displaystyle 0 < |u_{k,\varepsilon}(x)| \leq C\exp \left( - \frac{\lambda}{\varepsilon} \frac{\mathrm{d}_{cyl}(x,x_{k,\varepsilon})}{1 + \mathrm{d}_{cyl}(x,x_{k,\varepsilon})} \right) |x|^{-\nu} \quad \forall x \in \mathbb{R}^3\setminus\{0\}.
$$
%

\end{theorem}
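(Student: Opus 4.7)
The plan is to adapt the penalization scheme developed in Sections \ref{section:penalization-scheme}--\ref{section:solution-initial-problem} (itself modelled on \cite{B-DC-VS}) to the real-valued scalar equation \eqref{eq:kvortices}, and then recover the complex solution via the ansatz $u_{k,\varepsilon} = C_k\left(\frac{x_2+ix_1}{\rho}\right)^k v_{k,\varepsilon}$. Write $W_\varepsilon(\rho,x_3) = \left(\frac{k\varepsilon}{\rho} + c(\rho)\right)^2 + V(\rho,x_3)$, so that \eqref{eq:kvortices} is a semiclassical real Schrödinger equation whose effective potential is nonnegative, cylindrically symmetric, and satisfies, away from the axis, $W_\varepsilon \to c^2(\rho) + V(\rho,x_3)$ uniformly on compact sets. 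The hypothesis $\liminf_{\rho\to\infty} c(\rho)\rho>0$ guarantees that $\liminf_{|x|\to\infty} W_\varepsilon(x)|x|^2>0$ uniformly for $\varepsilon$ small, while $\bar\Lambda\cap\mathcal{H}=\emptyset$ keeps $k\varepsilon/\rho$ uniformly small on and near $\bar\Lambda$.

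First, I would introduce a del Pino--Felmer penalization of the nonlinearity on $\mathbb{R}^3\setminus\Lambda$, replacing $|s|^{p-2}s$ by its minimum with $\varepsilon^2 H(x) + \mu W_\varepsilon(x)$, analogous to \eqref{eq:penalized_nonlinearity}. Working in the cylindrically symmetric Sobolev space associated to $W_\varepsilon$, the penalized functional has the mountain pass geometry, and the Palais--Smale analysis from Lemma \ref{lemma:existence_solutions_penalized_problem} goes through almost verbatim (actually simpler here, since there is no magnetic derivative and the functions are real). This yields a least energy critical point $v_{k,\varepsilon}$. Next, I would establish the upper bound $\liminf_{\varepsilon\to 0}\varepsilon^{-2} c_\varepsilon \le \inf_{\Lambda\cap\mathcal{H}^\perp}\mathcal M$ by testing with the rescaled ground state of the limit problem $-\Delta w + (c^2(\rho_0)+V(\rho_0,0))w = C_k^{p-2}|w|^{p-2}w$ localised around a minimizer of $\mathcal M$, tuning $C_k$ so that the energy factor matches \eqref{newM}.

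Then I would carry out the asymptotic analysis of Section \ref{section:asymptotic-analysis}: the rescaled sequence $v_{k,\varepsilon}(x_\varepsilon+\varepsilon y)$ converges in $C^{1,\alpha}_{\mathrm{loc}}$ to a ground state of the real limit equation, using that $\frac{k\varepsilon}{\rho_\varepsilon+\varepsilon y_1}\to 0$ on compact sets of $y$ (since $\rho_\varepsilon$ stays bounded away from $0$). Combining the lower bound on the action with the upper bound and using both inequalities in \eqref{eq:condition_sur_Lambda}, one forces the concentration points $x_{k,\varepsilon}$ into $\Lambda\cap\mathcal H^\perp$ and to minimize $\mathcal M$, giving (ii)--(v). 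Finally, the barrier construction of Lemma \ref{lemma:barrier_function} (applied with $W_\varepsilon$ in the role of $V$ and using case (i) since $\alpha=2$) provides the exponential/polynomial decay \eqref{eq:estimate_modulus_solutions}. Once the decay implies $|v_{k,\varepsilon}|^{p-2} \le \mu W_\varepsilon + \varepsilon^2 H$ outside $\Lambda$, $v_{k,\varepsilon}$ solves the genuine equation \eqref{eq:kvortices}, and then $u_{k,\varepsilon}$ solves \eqref{eq:original} with $|u_{k,\varepsilon}|=|C_k|\,v_{k,\varepsilon}$, transferring the decay estimate directly.

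The main technical obstacle is the uniform $\varepsilon$-dependence of $W_\varepsilon$: one must ensure that all the estimates in Sections \ref{section:penalization-scheme}--\ref{section:solution-initial-problem} (boundedness of Palais--Smale sequences, the penalization inequality outside $\Lambda$, convergence of the rescaled problems, and the barrier comparison) hold uniformly in $\varepsilon$ despite the potential $W_\varepsilon$ shifting with $\varepsilon$. The conditions $\bar\Lambda\cap\mathcal H=\emptyset$, $c(\rho)>0$, and $\liminf_{\rho\to\infty}c(\rho)\rho>0$ are precisely what make this uniformity work, by controlling the perturbation $k\varepsilon/\rho$ both near the axis (where it would blow up) and at infinity (where it must not degrade the quadratic lower bound on $W_\varepsilon$). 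Once these uniform controls are in place, the rest of the argument is a faithful transcription of the preceding sections to the real-valued setting.
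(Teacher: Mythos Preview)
Your proposal is correct and follows exactly the strategy the paper indicates: reduce to the real-valued cylindrically symmetric equation \eqref{eq:kvortices} with effective potential $W_\varepsilon$, and then run the penalization scheme of \cite{B-DC-VS} (equivalently, the arguments of Sections~\ref{section:penalization-scheme}--\ref{section:solution-initial-problem} in the real scalar setting). In fact the paper gives no proof beyond the sentence ``The proof of the following theorem can be easily recovered from \cite{B-DC-VS} with straightforward modifications,'' so your outline is already more explicit than what is in the text, and your identification of the uniform-in-$\varepsilon$ control of $W_\varepsilon$ as the only nonroutine point is precisely the ``straightforward modification'' the authors have in mind.
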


As previously discussed, we can consider the critical frequency as one can allow $V$ to vanish at the local minimum point of  
$\mathcal M$ in $\Lambda$. 

Observe also that the ansatz fixes the concentration as the concentration set is the same for any choice of $k\in\Z$.
%
%

\bibliographystyle{plain}
\bibliography{biblio-BonCinNys.bib}

\end{document}